\documentclass[12pt]{amsart}
 \newtheorem{thm}{Theorem}[section]
 \newtheorem{cor}[thm]{Corollary}
 
 \newtheorem{prop}[thm]{Proposition}
 \theoremstyle{definition}
 
 \theoremstyle{remark}
 \newtheorem{rem}[thm]{Remark}
 \newtheorem{ex}[thm]{Example}
 \numberwithin{equation}{section}

  \usepackage{amsmath,amssymb}
  \usepackage{amsthm}
\usepackage{mathrsfs}
\usepackage{enumerate}
\usepackage{graphicx}
\usepackage{tikz}\parindent=0.cm
\usetikzlibrary{shapes.geometric}
\usetikzlibrary{positioning}
\usetikzlibrary{arrows}
\usetikzlibrary{shapes.multipart}
\usepackage{physics}
\usepackage[notcite, final, notref]{showkeys}
\usepackage{dsfont}

\usepackage{shadow}
\usepackage{amscd}
\newcommand{\e}{\varepsilon}

\usepackage{tikz}
  \usepackage{tikz-cd} 
\usetikzlibrary{arrows,shapes,positioning,shadows,trees,matrix}
\usepackage{xcolor}
\usepackage{pst-node}

\begin{document}

\title[Trace formula]{Operator model and a trace formula for pairs of unitary operators}

\author[D. Alpay]{Daniel Alpay}
\address{(DA) Schmid College of Science and Technology \\
Chapman University\\
One University Drive
Orange, California 92866\\
USA}
\email{alpay@chapman.edu}

\author[F. Colombo]{Fabrizio Colombo}
\address{(FC) Politecnico di
  Milano\\Dipartimento di Matematica\\Via E. Bonardi, 9\\20133 Milano\\Italy}
  \email{fabrizio.colombo@polimi.it}

  \author[I. Lewkowicz]{Izchak Lewkowicz}
  \address{(IL) School of Electrical and Computer Engineering\\
Ben-Gurion University of the Negev\\ P.O.B. 653\\ Beer-Sheva, 84105\\
Israel}
\email{izchak@bgu.ac.il}

\author[I. Sabadini]{Irene Sabadini}
\address{(IS) Politecnico di
  Milano\\Dipartimento di Matematica\\Via E. Bonardi, 9\\20133 Milano\\Italy}
\email{irene.sabadini@polimi.it}

\thanks{
           Daniel Alpay thanks the Foster G. and Mary McGaw Professorship in Mathematical Sciences, which supported this research}

         \begin{abstract}
           Using the theory of reproducing kernel Hilbert spaces introduced by L. de Branges and J. Rovnyak we prove a trace formula for pairs of operators in Hilbert space
           in terms of a Carath\'eodory function. We consider the special case, when the latter is rational. An
           application to the theory of first order discrete systems is given.
\end{abstract}
\maketitle
\tableofcontents

{\bf MSC Classification}: 47B32, 47A45,  30J.

{\bf Keywords}: de Branges-Rovnyak spaces; analytic functions with positive real part; trace; unitary operators.

\section{Introduction}
\setcounter{equation}{0}
In \cite{dbr1} L. de Branges and J. Rovnyak give a model for pairs of self-adjoint operators in terms of reproducing kernel Hilbert spaces associated to a function analytic and
with a positive real part in the open upper half plane  (i.e. a Nevanlinna function). This construction was used in \cite{MR1960423,MR2069002} to prove a related trace formula.
In the case of pairs of unitary operators a model was given in \cite{dbs} by L. de Branges and L.A. Shulman in terms of a function analytic and with a positive real part in the open
unit disk $\mathbb D$. Such functions are called  Carath\'eodory functions in the mathematical literature and discrete-time-positive-real functions
in the engineering literature.\smallskip

In the present work we prove a trace formula for a pair of unitary operators. We use a model for pairs of unitary operators different from the one appearing in \cite{dbs},
still in terms of a Carath\'eodory function, and developed by one of the authors with V. Bolotnikov in \cite[\S 5.3 p. 104]{MR1638044}
(see \cite[\S 5.3 p. 87]{MR2002b:47144} for the English translation). We give an application to the theory of first order systems, appearing in
particular in the theory of orthogonal polynomials on the unit circle; see for instance \cite{DI1, szego} and the references therein.\smallskip

The paper consists of seven sections besides the introduction. In Section \ref{sec-2} we review the theory of reproducing kernel Hilbert spaces
associated to Carath\'eodory functions. The differences between the present approach and the one in \cite{dbs} is considered in Section \ref{2.1}.
The model for pairs of unitary operators is presented in Section \ref{sec-3}. Sections \ref{Tr1} and \ref{tr2} are devoted to two proofs
of the trace formula. The rational case is considered in Section \ref{ra-case}. In Section \ref{seccano} we illustrate the trace formula in the setting of canonical  discrete
systems.

\section{$\mathcal L(\Phi)$ spaces}
\setcounter{equation}{0}
\label{sec-2}
Let $\Phi$ be a $\mathbb C^{n\times n}$-valued function analytic in the open unit disk $\mathbb D$ and having  a positive real part there. Herglotz'
representation theorem (see \cite{hspnw,herglotz} for the original paper and \cite{MR48:904} for a proof in the operator-valued setting)  asserts that
\begin{equation}
\label{phi-herg}
\Phi({\lambda})=iA+\int_{0}^{2\pi}\frac{e^{it}+{\lambda}}{e^{it}-{\lambda}}d\mu(e^{it}),
\end{equation}
where $\mu$ is a $\mathbb C^{n\times n}$-valued positive measure on the unit circle $\mathbb T$ and $A\in\mathbb C^{n\times n}$ satisfies $A+A^*=0$.
Formula \eqref{phi-herg} allows to extend $\Phi$ to
\[
\mathbb E\stackrel{\rm def.}{=}\left\{{\lambda}\in\mathbb C\,\,;\,\, |{\lambda}|>1\right\}
\]
via the formula
\begin{equation}
  \label{2-2}
\Phi({\lambda})=-\left(\Phi\left(\frac{1}{\overline{{\lambda}}}\right)\right)^*,\quad {\lambda}\in\mathbb E.
  \end{equation}
For this extension one has the formula
\begin{equation}
  \label{ext-2-2}
  L_\Phi({\lambda},w)\stackrel{\rm def.}{=}    \frac{\Phi({\lambda})+(\Phi(w))^*}{2(1-{\lambda}\overline{w})}=\int_0^{2\pi}\frac{d\mu(e^{it})}{(e^{it}-{\lambda})(e^{-it}-\overline{w})},\quad {\lambda},w\in
  \mathbb C\setminus      \mathbb T.
    \end{equation}
    Thus the kernel $L_\Phi({\lambda},w)$ is positive definite in  $\mathbb C\setminus      \mathbb T$. We will denote by $\mathcal L(\Phi)$
    the associated reproducing kernel Hilbert space.\smallskip

    \begin{ex}
      \label{exexex}
    For instance, for $\Phi({\lambda})=1$ in $\mathbb D$ we get $\Phi(\lambda)=-1$ for ${\lambda}\in\mathbb E$ and the extended kernel $L_\Phi$  is
    \begin{equation}
      \label{1/2}
      L_\Phi(\lambda,w)=    \begin{cases}\,\, \dfrac{1}{1-\lambda\overline{w}},\quad \lambda,w\in\mathbb D,\\
        \\
        \,\, \dfrac{-1}{1-\lambda{\overline{w}}},\quad \lambda,w\in\mathbb E,\\  \\
        \,\, 0,\,\,\hspace{12.3mm}{\rm otherwise.}\end{cases}
      \end{equation}
This example serves also to show that in general there is no analytic continuation of the kernel across $\mathbb T$.
\end{ex}

\begin{thm}
A non-constant $\mathbb C^{n\times n}$-valued Carath\'eodory function $\Phi$ extended to $\mathbb E$ is meromorphic in the complex plane if and only if $\mu$ is a jump measure with a finite number of jumps, i.e. if and only if $\Phi$ is of the form
\begin{equation}
  \label{link}
\Phi(\lambda)=iE+\sum_{\ell=1}^mM_\ell \frac{e^{it_\ell}+\lambda}{e^{it_\ell}-\lambda},
\end{equation}
where $E\in\mathbb C^{n\times n}$ is Hermitian, $m\in\mathbb N$, $t_1,\ldots,t_m\in[0,2\pi)$ and $M_1,M_2,\ldots, M_m$ positive semi-definite $\mathbb C^{n\times n}$ matrices.
  \end{thm}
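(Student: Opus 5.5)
The easy direction ("if") comes first. If $\Phi$ has the form \eqref{link} on $\mathbb D$, then the right side of \eqref{link} is a rational function whose only possible poles are the points $e^{it_\ell}\in\mathbb T$. We check that its values on $\mathbb E$ agree with the extension \eqref{2-2}. Since $\overline{e^{it}}=e^{-it}$, $E$ and $M_\ell$ are Hermitian, and
\[
\left(\frac{e^{it}+1/\overline{\lambda}}{e^{it}-1/\overline{\lambda}}\right)^*=\frac{e^{-it}+1/\lambda}{e^{-it}-1/\lambda}=\frac{\lambda+e^{it}}{\lambda-e^{it}}=-\frac{e^{it}+\lambda}{e^{it}-\lambda},
\]
we get $-\Phi(1/\overline{\lambda})^*=iE+\sum_\ell M_\ell\frac{e^{it_\ell}+\lambda}{e^{it_\ell}-\lambda}$. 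So the extended $\Phi$ is the restriction to $\mathbb C\setminus\mathbb T$ of a rational, hence meromorphic, function. The same computation with the Herglotz formula \eqref{phi-herg} shows that in general the extension is given on $\mathbb E$ by the same integral formula as on $\mathbb D$. I will use this in the converse.

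For the converse, assume the extended $\Phi$ agrees on $\mathbb C\setminus\mathbb T$ with a function $F$ meromorphic in $\mathbb C$. The plan is to recover $\mu$ from the jump of $\Phi$ across $\mathbb T$, via the Stieltjes-type inversion adapted to the circle. For $0<r<1$ we have $\mathrm{Re}\,\Phi(re^{i\theta})=P_r*\mu$, the Poisson integral of $\mu$, and $P_r*\mu\to\mu$ weak-$*$ as $r\to1$. By \eqref{2-2},
\[
2\,\mathrm{Re}\,\Phi(re^{i\theta})=\Phi(re^{i\theta})-\Phi(r^{-1}e^{i\theta}).
\]
Now $F$ has only finitely many poles on the compact set $\mathbb T$, say at $e^{it_1},\dots,e^{it_m}$. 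Away from them, $F$ is continuous across $\mathbb T$, so the right-hand side tends to $0$ locally uniformly on arcs avoiding the poles. Hence $\mu$ vanishes off $\{e^{it_\ell}\}$, i.e. $\mu=\sum_\ell M_\ell\delta_{e^{it_\ell}}$ with $M_\ell=\mu(\{e^{it_\ell}\})\ge0$.

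Substituting this $\mu$ into \eqref{phi-herg} gives \eqref{link}, with $E=-iA$ Hermitian since $A+A^*=0$. Note that $\mu$ cannot have infinitely many atoms: these could accumulate on $\mathbb T$, but all atoms lie in the finite pole set of $F$ on $\mathbb T$. If $\mu=0$ then $\Phi$ is constant, which is excluded, so $m\ge1$.

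The main obstacle is making the localization rigorous, namely showing that $\mu$ vanishes on an open arc $I$ where $F$ is analytic. I would pair with a test function $\varphi\in C(\mathbb T)$ supported in a compact sub-arc $J\subset I$. The quantity $\int\varphi\,(P_r*\mu)\,dt$ equals $\tfrac12\int\varphi(\theta)\bigl(F(re^{i\theta})-F(r^{-1}e^{i\theta})\bigr)d\theta$. This tends to $0$ by uniform continuity of $F$ on a closed annular neighbourhood of $J$. The left side tends to $\int\varphi\,d\mu$, so $\mu|_I=0$. A related subtlety is that "meromorphic in the complex plane" must be read as meaning that the two pieces of $\Phi$ on $\mathbb D$ and on $\mathbb E$ continue into one another. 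Example \ref{exexex} shows that this need not hold for the kernel, but it does hold for $\Phi$ itself in the rational case. I would state this interpretation explicitly at the start of the proof.
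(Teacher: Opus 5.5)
Your proof is correct and follows the paper's route: the paper's one-line proof simply invokes the inversion formula recovering $\mu$ from $\Phi$, and you make this explicit by combining $2\,\mathrm{Re}\,\Phi(re^{i\theta})=\Phi(re^{i\theta})-\Phi(r^{-1}e^{i\theta})$ (from \eqref{2-2}) with weak-$*$ convergence of the Poisson integrals to $\mu$, localized on the pole-free arcs of $\mathbb T$. The only slip is cosmetic: the constant in \eqref{phi-herg} must be taken with $A=A^*$ (the condition $A+A^*=0$ there is a misprint, since both \eqref{ext-2-2} and your claim that the extension to $\mathbb E$ is given by the same integral require $A$ Hermitian), and then $E=A$, not $E=-iA$.
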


  \begin{proof}
It comes from the inversion formula which gives $\mu$ in terms of $\Phi$.
    \end{proof}

For the next theorem see for instance \cite[Lemma 6.5 p. 629]{ad1}. The fact that \eqref{ad1-1} is well defined follows from the fact that the closed linear span of the functions
of the form $c/(e^{it}-\lambda)$ where $\lambda\in\mathbb C\setminus\mathbb T$ and $c\in\mathbb C^n$ is dense in $\mathbf L_2^n(d\mu)$. This follows from the density in the
uniform norm of the rational functions in the set of continuous functions on $\mathbb T$. See \cite[Theorem 1 p. 192 and Theorem 2 p. 193]{zbMATH03263403} for the latter.

\begin{thm}
  Let $\Phi$ be a $\mathbb C^{n\times n}$-valued Carath\'eodory function, extended to $\mathbb E$ via \eqref{2-2}. The associated reproducing kernel Hilbert space
  $\mathcal L(\Phi)$ of functions analytic in $\mathbb C\setminus\mathbb T$ can be described as the set of functions of the form
      \begin{equation}
\label{rep-F-phi}
        \mathcal L(\Phi)=\left\{F({\lambda})=\int_0^{2\pi}\frac{d\mu(e^{it})f(e^{it})}{e^{it}-{\lambda}},\quad f\in \mathbf L_2^n(d\mu)\right\},
      \end{equation}
      with norm
      \begin{equation}
\label{ad1-1}
        \|F\|_\Phi=\|f\|_\mu.
        \end{equation}
      \end{thm}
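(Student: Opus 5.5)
The plan is the standard argument that a kernel of the form $K(\lambda,w)=\langle k_w,k_\lambda\rangle_H$ has as reproducing kernel Hilbert space the range of the map $h\mapsto \langle h,k_\lambda\rangle_H$, equipped with the range norm. Here $H=\mathbf L_2^n(d\mu)$ and the natural choice is $k_w(e^{it})c=c/(e^{-it}-\overline{w})^{*}$, i.e.\ $k_w(e^{it})c = c/\overline{(e^{-it}-\overline w)}=c/(e^{it}-w)$ up to conjugation conventions. Formula \eqref{ext-2-2} then reads
\[
c^*L_\Phi(\lambda,w)d=\int_0^{2\pi}\frac{c^*\,d\mu(e^{it})\,d}{(e^{it}-\lambda)(e^{-it}-\overline{w})}=\Big\langle \frac{d}{e^{it}-w},\frac{c}{e^{it}-\lambda}\Big\rangle_\mu .
\]

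First, I define $T:\mathbf L_2^n(d\mu)\to$ functions on $\mathbb C\setminus\mathbb T$ by
\[
(Tf)(\lambda)=\int_0^{2\pi}\frac{d\mu(e^{it})f(e^{it})}{e^{it}-\lambda}.
\]
Then $c^*(Tf)(\lambda)=\langle f, c/(e^{it}-\overline{\lambda}^{\,*})\rangle$; more carefully, $c^*(Tf)(\lambda)=\langle f,g_{\lambda,c}\rangle_\mu$ with $g_{\lambda,c}(e^{it})=c/\overline{(e^{it}-\lambda)}=c/(e^{-it}-\overline\lambda)$. The integral is well defined and analytic in $\lambda$ off $\mathbb T$, since $|e^{it}-\lambda|$ is bounded below on compacts and $\mu$ is finite. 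Second, I note $T(g_{w,d})(\lambda)=\int d\mu\, d/((e^{it}-\lambda)(e^{-it}-\overline w))=L_\Phi(\lambda,w)d$, so $T$ maps the kernel-type functions $g_{w,d}$ onto the kernel functions $L_\Phi(\cdot,w)d$. I also check the isometry on their span:
\[
\langle Tg_{w,d},Tg_{v,c}\rangle_\Phi=c^*L_\Phi(v,w)d=\langle g_{w,d},g_{v,c}\rangle_\mu,
\]
using the reproducing property and \eqref{ext-2-2}.

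Third, I extend by density, which is the key step and the one flagged in the paper just before the statement. The closed span of the $g_{w,d}$ equals the closed span of the $c/(e^{it}-\lambda)$, after replacing $\lambda$ by $1/\overline{\lambda}$ and adjusting constants: $1/(e^{-it}-\overline w)=-e^{it}/(\overline w(e^{it}-1/\overline w))$, together with the constants obtained when $w=0$. This span is dense in $\mathbf L_2^n(d\mu)$ because rational functions with poles off $\mathbb T$ are uniformly dense in $C(\mathbb T)$, which is in turn dense in $\mathbf L_2(d\mu)$ for the matrix measure, applied entrywise via the trace measure. Hence the linear isometry extends to a unitary map $U$ from $\mathbf L_2^n(d\mu)$ onto $\mathcal L(\Phi)$, since kernel functions are dense in $\mathcal L(\Phi)$. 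Finally, I identify $U=T$. For $f$ in $\mathbf L_2^n(d\mu)$,
\[
c^*(Uf)(\lambda)=\langle Uf, L_\Phi(\cdot,\lambda)c\rangle_\Phi=\langle f,g_{\lambda,c}\rangle_\mu=c^*(Tf)(\lambda).
\]
Thus $\mathcal L(\Phi)=T(\mathbf L_2^n(d\mu))$ and $\|Tf\|_\Phi=\|f\|_\mu$.

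Density also shows that $T$ is injective, because $Tf=0$ forces $f\perp g_{\lambda,c}$ for all $\lambda$ and $c$. This makes the norm \eqref{ad1-1} well defined: no quotient by the kernel of $T$ is needed. The main obstacle is this density step for a matrix-valued, possibly singular, $\mu$. I would handle it by writing $d\mu=W\,d\nu$ with $\nu=\operatorname{Tr}\mu$ and $0\le W\le I$, and approximating continuous vector functions in $\mathbf L_2(W\,d\nu)$ by rational ones, with the norm controlled by the sup norm times $\nu(\mathbb T)$.
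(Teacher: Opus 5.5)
Your proposal is correct and follows essentially the paper's route: the paper (deferring to Alpay--Dym) identifies $\mathcal L(\Phi)$ with the range of $f\mapsto\int_0^{2\pi}d\mu(e^{it})f(e^{it})/(e^{it}-\lambda)$, and, as you do, gets the norm well defined from the density of the Cauchy kernels in $\mathbf L_2^n(d\mu)$, which comes from the uniform density of rational functions in $C(\mathbb T)$. Two slips do not affect the argument: the last expression in your opening display should be $\langle g_{w,d},g_{\lambda,c}\rangle_\mu$ (with $e^{-it}-\overline w$ and $e^{-it}-\overline\lambda$ in the denominators), which is what you actually use later; and $g_{0,d}(e^{it})=d\,e^{it}$ is not constant --- the constants arise instead as the uniform limit of $-\overline w\,g_{w,d}$ as $w\to\infty$, so your density claim still holds.
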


      We denote by $W$ the map
      \begin{equation}
        Wf({\lambda})=F({\lambda})=\int_0^{2\pi}\frac{d\mu(e^{it})f(e^{it})}{e^{it}-{\lambda}},\quad f\in \mathbf L_2^n(d\mu).
    \end{equation}
    Furthermore, we denote by
      $R_a$ the backward-shift operator
    \begin{equation}
      (R_aF)({\lambda})=\begin{cases}\, \dfrac{F({\lambda})-F(a)}{{\lambda}-a},\quad {\lambda}\in V\setminus\left\{a\right\},\\
        \,\,\, F^\prime(a),\,\,\,\,\, \quad \hspace{0.86cm}{\lambda}=a,\end{cases}
      \end{equation}
      defined for $F$ analytic in a neighborhood $V$ of the point $a\in\mathbb C$.

      \begin{thm}
        The operator $S$ of multiplication by $e^{it}$ is unitary from $\mathbf L_2^n(d\mu)$ onto itself and
        \begin{equation}
          (          W(S-aI_{\mathbf L_2^n(d\mu)})^{-1}W^{-1}F)({\lambda})=(R_aF)({\lambda}),\quad a\in\mathbb C\setminus \mathbb T.
          \end{equation}
        \end{thm}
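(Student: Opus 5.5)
Unitarity of $S$ is immediate, since $|e^{it}|=1$ on $\mathbb T$. Multiplication by $e^{it}$ is therefore an isometry of $\mathbf L_2^n(d\mu)$: $\|Sf\|_\mu^2=\int f^*\,d\mu\, f$, because the scalar factor $|e^{it}|^2=1$ commutes with the matrix measure. Multiplication by $e^{-it}$ is a two-sided inverse, so $S$ is onto, and $S^*$ is multiplication by $e^{-it}$. For $a\in\mathbb C\setminus\mathbb T$, the function $e^{it}-a$ is bounded away from zero on $\mathbb T$. Hence $S-aI$ is invertible, and $(S-aI)^{-1}$ is multiplication by $1/(e^{it}-a)$, a bounded operator on $\mathbf L_2^n(d\mu)$.

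For the intertwining identity I will use the previous theorem: $W$ is a unitary map of $\mathbf L_2^n(d\mu)$ onto $\mathcal L(\Phi)$. It is isometric by \eqref{ad1-1}, onto by \eqref{rep-F-phi}, and injective precisely because of the density remark preceding that theorem. So $W^{-1}$ makes sense, and it suffices to check that $W(S-aI)^{-1}f=R_aWf$ for every $f\in\mathbf L_2^n(d\mu)$. Set $F=Wf$. Then
\[
(W(S-aI)^{-1}f)(\lambda)=\int_0^{2\pi}\frac{d\mu(e^{it})f(e^{it})}{(e^{it}-a)(e^{it}-\lambda)}.
\]
Apply the partial fraction identity
\[
\frac{1}{(e^{it}-a)(e^{it}-\lambda)}=\frac{1}{\lambda-a}\left(\frac{1}{e^{it}-\lambda}-\frac{1}{e^{it}-a}\right),\quad \lambda\neq a.
\]
With it the integral becomes $(F(\lambda)-F(a))/(\lambda-a)=(R_aF)(\lambda)$. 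At $\lambda=a$ the integrand is $d\mu\, f/(e^{it}-a)^2$, and differentiating under the integral sign gives exactly $F'(a)$.

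The only point requiring care is justifying these integral manipulations. Both pieces of the partial fraction split must be $\mu$-integrable against $f$, and differentiation under the integral sign must be legitimate. Both follow from one estimate. Every entry of $\mu$ is a finite measure, so $\mathbf L_2^n(d\mu)\subset\mathbf L_1^n(d\mu)$ by Cauchy--Schwarz with respect to the trace measure. In addition, $1/(e^{it}-\lambda)$ is uniformly bounded, together with its $\lambda$-derivative, for $\lambda$ in compact subsets of $\mathbb C\setminus\mathbb T$. Dominated convergence then handles the point $\lambda=a$, or one can simply note that both sides are analytic and agree off $\{a\}$. It also follows that $R_aF$ lies in $\mathcal L(\Phi)$, so $R_a$ is a bounded operator there, unitarily equivalent to $(S-aI)^{-1}$.
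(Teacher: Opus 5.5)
Your proof is correct and follows the paper's own argument. The paper simply records the identity $(R_aF)(\lambda)=\int_0^{2\pi}\frac{d\mu(e^{it})f(e^{it})}{(e^{it}-\lambda)(e^{it}-a)}$, which is exactly your partial-fraction computation; your extra checks (unitarity of $S$, bijectivity of $W$, the point $\lambda=a$, integrability) make explicit what the paper leaves implicit.
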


        \begin{proof}
          This follows from
          \[
            ( R_aF)({\lambda})=\int_0^{2\pi}\frac{d\mu(e^{it})f(e^{it})}{(e^{it}-{\lambda})(e^{it}-a)}.
          \]

        \end{proof}

        We note that the above result still holds for $a\in\mathbb T$ such that
\[
        {\rm Tr}\,\left(\int_0^{2\pi}\frac{d\mu(e^{it})}{|e^{it}-a|^2}\right)<\infty.
\]
We also note that the limit
\begin{equation}
  \lim_{\lambda\rightarrow\infty}\lambda F(\lambda)=-\int_0^{2\pi}d\mu(e^{it})f(e^{it})
\label{lim+}
\end{equation}
exists for all $F\in\mathcal L(\Phi)$.

\begin{thm}
  \label{circle}
  A reproducing kernel Hilbert space $(\mathcal H,[\cdot,\cdot])$ of $\mathbb C^n$-valued functions analytic in $\mathbb C\setminus\mathbb T$ has
  a reproducing kernel of the form $L_\Phi$ if and only if it is $R_a$-invariant for $a\in\mathbb C\setminus\mathbb T$ and if moreover, for every
  $a,b\in\mathbb C\setminus\mathbb T$ and $F,G\in\mathcal H$, it holds that
        \begin{equation}
          \label{structure-ide}
[F,G]+a[R_aF,G]+\overline{b}[F,R_bG]-(1-a\overline{b})[R_aF,R_bG]=0.
          \end{equation}
          In particular $\ker R_a=\left\{0\right\}$ for $a\in\mathbb C\setminus\mathbb T$.
                  \end{thm}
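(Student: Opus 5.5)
Necessity comes first. Suppose the reproducing kernel of $\mathcal H$ is $L_\Phi$. By the preceding theorems, $\mathcal H=\mathcal L(\Phi)$, and $W$ is unitary from $\mathbf L_2^n(d\mu)$ onto $\mathcal L(\Phi)$. Under $W$, the backward shift $R_a$ corresponds to $(S-aI)^{-1}$, so $\mathcal H$ is $R_a$-invariant. To obtain \eqref{structure-ide}, write $F=Wf$ and $G=Wg$. Then
\[
[F,G]=\int g^*\,d\mu\, f,\qquad [R_aF,G]=\int g^*\,d\mu\,\frac{f}{e^{it}-a},
\]
and similarly for the other terms. The left side of \eqref{structure-ide} becomes $\int g^*\,d\mu\,f$ multiplied by the scalar
\[
1+\frac{a}{e^{it}-a}+\frac{\overline b}{e^{-it}-\overline b}-\frac{1-a\overline b}{(e^{it}-a)(e^{-it}-\overline b)},
\]
which, after clearing denominators and using $e^{it}e^{-it}=1$, vanishes identically. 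Finally, $\ker R_a=\{0\}$ holds because $(S-aI)^{-1}$ is injective.

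For sufficiency, take $F=K(\cdot,w)c$ and $G=K(\cdot,v)d$, where $K$ is the reproducing kernel of $\mathcal H$. Then $[R_aF,G]=d^*(R_aK(\cdot,w)c)(v)$, and I want to express such terms through $K$ itself. The cleanest route is to use the adjoint $R_b^*$. Reproducing-kernel bookkeeping gives
\[
(R_b^*K(\cdot,v)d)(\lambda)=\text{something in }K,
\]
and \eqref{structure-ide} applied with $G=K(\cdot,v)d$ yields a formula for $R_b^*$ in terms of $R_b$, $I$ and evaluations. I plan instead to exploit the freedom to choose $a,b$ after specializing $F$ and $G$ to kernel functions. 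Taking $a=w$ is tempting, but $R_aK(\cdot,w)$ is not simply expressible, so I would rather evaluate pointwise. Writing \eqref{structure-ide} with $G=K(\cdot,v)d$ gives, for all $F$,
\[
d^*\bigl(F(v)+aR_aF(v)\bigr)=\bigl[(I-(1-a\overline b)R_a)F,\;R_bK(\cdot,v)d\bigr]-\overline b\,\bigl[F,R_bK(\cdot,v)d\bigr].
\]
Letting $b\to$ suitable values, or setting $b=0$, gives $F(v)+aR_aF(v)=[R_aF,R_0K(\cdot,v)]$ type identities. The key step is to define the function $\Psi$ by
\[
\Psi(\lambda)+\Psi(w)^*=2(1-\lambda\overline w)K(\lambda,w)
\]
and to show that the right side has the required separated form. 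Concretely, I would fix a point, say $0$, and set
\[
\Psi(\lambda)=2\bigl(K(\lambda,0)\bigr)-K(0,0)+iE.
\]
Then, using \eqref{structure-ide} with $a=\lambda$-independent choices, I would verify that $(1-\lambda\overline w)K(\lambda,w)$ is a sum of a function of $\lambda$ and a function of $\overline w$. This computation is the main obstacle: one must pass from the operator identity to the kernel identity by choosing $F=K(\cdot,w)c$ and $b=0$, and use $R_0$ together with the reproducing property.

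Once $K=L_\Psi$ with $\Psi$ analytic in $\mathbb C\setminus\mathbb T$, positivity of $K$ on $\mathbb D$ gives $\mathrm{Re}\,\Psi\ge0$ there. The relation $\Psi(\lambda)=-\Psi(1/\overline\lambda)^*$ should follow from the symmetry of the kernel identity between $\mathbb D$ and $\mathbb E$. In particular, the identity $K(\lambda,w)=0$ for mixed arguments in the constant case must be derivable, for instance by comparing the identity at $w$ and at $1/\overline w$. This last point is delicate, and I would check it using the fact, recorded in \eqref{lim+}, that $\lambda F(\lambda)$ has a limit at infinity.
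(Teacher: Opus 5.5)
Your necessity argument is correct: the scalar factor does vanish, because $1+\frac{a}{e^{it}-a}+\frac{\overline b}{e^{-it}-\overline b}=\frac{1-a\overline b}{(e^{it}-a)(e^{-it}-\overline b)}$. Your deduction of $\ker R_a=\{0\}$ is also fine; the paper gets it directly from \eqref{structure-ide} with $G=F$, $b=a$.

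\textbf{The gap is in sufficiency, which is the real content of the theorem.} You never derive the separated form of $(1-\lambda\overline w)K(\lambda,w)$, and the specialization you propose, $b=0$, cannot produce it. With $b=0$ the coefficient $1-a\overline b$ equals $1$, so the term $[R_aF,R_0K(\cdot,v)d]$ survives. Neither entry is a kernel function, so the reproducing property does not evaluate it, and you have no formula for $R_0^*$. Your displayed rearrangement is also wrong: its right side should be $(1-a\overline b)[R_aF,R_bK(\cdot,v)d]-\overline b[F,R_bK(\cdot,v)d]$.

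\textbf{The missing idea} is to take $a\neq0$ and $b=1/\overline a$. This is allowed precisely because the functions live on both $\mathbb D$ and $\mathbb E$, and it makes $1-a\overline b=0$, so the quadratic term disappears. Take $G=K(\cdot,\alpha)c$ with $\alpha\neq a$. Then $[F,G]+a[R_aF,G]=c^*\frac{\alpha F(\alpha)-aF(a)}{\alpha-a}$, which is the inner product of $F$ with a combination of $K(\cdot,\alpha)c$ and $K(\cdot,a)c$. The remaining term is $[F,\frac{1}{\overline a}R_{1/\overline a}K(\cdot,\alpha)c]$. Since the identity holds for every $F$, that combination is the zero element of $\mathcal H$. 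Evaluating it at $\lambda$ gives
\[
(1-\lambda\overline\alpha)K(\lambda,\alpha)=(1-\overline a\lambda)K(\lambda,a)-\frac{\overline\alpha-\overline a}{\overline a}K\Bigl(\frac1{\overline a},\alpha\Bigr).
\]
Hermitian symmetry of $K$ then forces the $\alpha$-part to equal $\bigl((1-\overline a\alpha)K(\alpha,a)\bigr)^*$ plus a constant Hermitian matrix. That yields $K=L_\Phi$ with $\Phi$ analytic in $\mathbb C\setminus\mathbb T$.

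\textbf{The symmetry \eqref{2-2}} is also handled wrongly. Appealing to \eqref{lim+} would be circular, since that is a property of $\mathcal L(\Phi)$, not of an arbitrary $\mathcal H$ satisfying the hypotheses. Once $(1-\lambda\overline w)K(\lambda,w)=\frac12(\Phi(\lambda)+\Phi(w)^*)$ holds on all of $\mathbb C\setminus\mathbb T$, put $w=1/\overline\lambda$ (extending by continuity in $\overline w$ if this point equals $a$). The left side vanishes because $K$ is finite there, which gives $\Phi(\lambda)=-\Phi(1/\overline\lambda)^*$ immediately. No separate analysis of the mixed $\mathbb D$/$\mathbb E$ values is needed. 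Positivity of $\mathrm{Re}\,\Phi$ on $\mathbb D$ follows from $K(\lambda,\lambda)\ge0$, as you say. Your candidate $\Psi(\lambda)=2K(\lambda,0)-K(0,0)+iE$ is correct a posteriori, by re-basing the separated identity at $w=0$. However, the derivation itself must go through a base point $a\neq0$, because $a=0$ admits no $b$ with $1-a\overline b=0$.
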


        \begin{proof}
The line version of the theorem appears in \cite[p. 13]{dbbook}, and the proof we present is adapted from that source. Let $K({\lambda},w)$ be the reproducing kernel of $\mathcal H$.
 We take $a\not=0$ and $\alpha$ two different points in $\mathbb C\setminus\mathbb T$ and
set in  \eqref{structure-ide}    $b=\frac{1}{\overline{a}}$ and     $G=K_\alpha c$ with $c\in\mathbb C^n$ (i.e. $G({\lambda})=K({\lambda},\alpha)c$). With $F\in\mathcal H$  we first note that
\[
\begin{split}
  [F,G]+a[R_aF,G]&=c^*F(\alpha)+c^*a\frac{F(\alpha)-F(a)}{\alpha-a}\\
  &=c^*\left(\frac{\alpha F(\alpha)-aF(a)}{\alpha-a}\right)\\
  &=[F,\frac{\overline{\alpha}}{\overline{\alpha}-\overline{a}}K_\alpha c-\frac{\overline{a}}{\overline{\alpha}-\overline{a}}K_a c].
  \end{split}
  \]
  Thus, \eqref{structure-ide} can be rewritten as
  \[
    [F, \frac{\overline{\alpha}}{\overline{\alpha}-\overline{a}}K_\alpha c-\frac{\overline{a}}{\overline{\alpha}-\overline{a}}K_a c+\frac{1}{\overline{a}}
    R_{\frac{1}{\overline{a}}}      K_\alpha c]=0,\quad\forall F\in\mathcal H,
    \]
    and hence we have
    \[
      \frac{\overline{\alpha}}{\overline{\alpha}-\overline{a}}      K({\lambda},\alpha) -\frac{\overline{a}}{\overline{\alpha}-\overline{a}}K({\lambda},a) +\frac{1}{\overline{a}}
\frac{K({\lambda},\alpha)-K(\frac{1}{\overline{a}},\alpha)}{{\lambda}-\frac{1}{\overline{a}}}=0.
  \]
  Thus
  \[
    \left(\frac{\overline{\alpha}}{\overline{\alpha}-\overline{a}}-\frac{1}{1-\overline{a}{\lambda}}\right)K({\lambda},\alpha)=
\frac{\overline{a}}{\overline{\alpha}-\overline{a}}K({\lambda},a) +\frac{1}{\overline{a}{\lambda}-1}K(\frac{1}{\overline{a}},\alpha).
    \]
    Equivalently
    \[
(1-{\lambda}\overline{\alpha})K({\lambda},\alpha)=\Phi_1({\lambda})+\Phi_2(\alpha)
      \]
      with
      \begin{equation}
          \Phi_1({\lambda})=(1-\overline{a}{\lambda})K({\lambda},a)\quad{\rm and}\quad
          \Phi_2(\alpha)=-\frac{  \overline{\alpha}-\overline{a}}{\overline{a}} K(\frac{1}{\overline{a}},\alpha).
        \end{equation}
        Since the kernel is Hermitian we have
        \[
\Phi_1({\lambda})+\Phi_2(\alpha)=(\Phi_1(\alpha))^*+(\Phi_2({\lambda}))^*,
\]
and it follows that $\Phi_2(\alpha)=(\Phi_1(\alpha))^*+M$, where $M$ is a constant Hermitian matrix. Hence, the kernel is of the required form with $\Phi({\lambda})=\Phi_1({\lambda})
+\frac{M}{2}$.\smallskip

          To prove the second claim let $F\in\ker R_a$ and let $F=G$ and $a=b$ in \eqref{structure-ide}.
          One gets $[F,F]=0$ and hence $F=0$.
          \end{proof}

          We illustrate this last claim on \eqref{1/2}. Let $f\in\ker R_0$, Then there exists constants $d$ and $e$ such that
            the function
            \[
              F({\lambda})=\begin{cases}\,\,d\quad {\lambda}\in\mathbb D,\\
                \,\,e\quad {\lambda}\in\mathbb E,\end{cases}
            \]
            belongs to $\mathcal L(\Phi)$. This is possible inside $\mathbb D$ since the kernel corresponds to the Hardy space of the open unit disk
            $\mathbf H_2(\mathbb D)$, but the restriction of
            the kernel to $\mathbb E$ corresponds to the space of functions of the form
\[
  F({\lambda})=\frac{1}{{\lambda}}h(1/{\lambda})
\]
with $h\in\mathbf H_2(\mathbb D)$ and contains no non-trivial constant. The fact that $d=0$ follows from the theorem.

\begin{rem}
  The proof of the characterization of $\mathcal L(\Phi)$ spaces in terms of the structural identity \eqref{structure-ide} extends to  the setting of Pontryagin or Krein spaces.
  In that case, we may have $\ker R_a\not=\left\{0\right\}$, and poles may occur in $\mathbb C\setminus\mathbb T$.
  \end{rem}

        \begin{thm}
          There exists a uniquely defined unitary operator $U$ in $\mathcal L(\Phi)$ such that
          \begin{equation}
            (U-aI_{\mathcal L(\Phi)})^{-1}=R_a,\quad a\in\mathbb C\setminus\mathbb T,
          \end{equation}
          and $U$ is given by
          \begin{equation}
            (UF)({\lambda})={\lambda}F({\lambda})+c_F,
          \end{equation}
          where
          \begin{equation}
c_F=-\lim_{w\rightarrow\infty}wF(w)=\int_0^{2\pi}d\mu(e^{it})f(e^{it})
\end{equation}
with $F$ as in \eqref{rep-F-phi}.
          \end{thm}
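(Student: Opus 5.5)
The plan is to transport everything to $\mathbf L_2^n(d\mu)$ through the map $W$, which by \eqref{ad1-1} is unitary from $\mathbf L_2^n(d\mu)$ onto $\mathcal L(\Phi)$. Set
\[
U\stackrel{\rm def.}{=}WSW^{-1},
\]
where $S$ is multiplication by $e^{it}$. Since $S$ is unitary and $W$ is unitary, $U$ is unitary on $\mathcal L(\Phi)$. The previous theorem gives
\[
W(S-aI)^{-1}W^{-1}=R_a,\qquad a\in\mathbb C\setminus\mathbb T.
\]
Conjugating $(S-aI)^{-1}$ by $W$ yields $(U-aI)^{-1}$, so the identity $(U-aI_{\mathcal L(\Phi)})^{-1}=R_a$ holds for every $a\in\mathbb C\setminus\mathbb T$.

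For the explicit formula, take $F=Wf$. Then
\[
(UF)(\lambda)=(W(e^{it}f))(\lambda)=\int_0^{2\pi}\frac{d\mu(e^{it})\,e^{it}f(e^{it})}{e^{it}-\lambda}.
\]
Write $e^{it}=(e^{it}-\lambda)+\lambda$. This splits the integral as
\[
\lambda\int_0^{2\pi}\frac{d\mu(e^{it})f(e^{it})}{e^{it}-\lambda}+\int_0^{2\pi}d\mu(e^{it})f(e^{it})=\lambda F(\lambda)+c_F.
\]
The second integral is finite because $\mu$ is finite and $f\in\mathbf L_2^n(d\mu)\subset\mathbf L_1^n(d\mu)$. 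By \eqref{lim+}, $c_F=-\lim_{w\to\infty}wF(w)$. In this form $c_F$ is expressed intrinsically in terms of $F$, which shows it does not depend on the choice of representative $f$ (even though $W$ is injective anyway).

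Uniqueness is immediate. If $U_1$ and $U_2$ both satisfy $(U_j-aI)^{-1}=R_a$ for even a single $a\in\mathbb C\setminus\mathbb T$, then $U_1-aI=U_2-aI$, since both equal the inverse of $R_a$. Here $R_a$ is injective by Theorem \ref{circle}, and its range is all of $\mathcal L(\Phi)$ because $R_a=W(S-aI)^{-1}W^{-1}$ and $(S-aI)^{-1}$ is onto.

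The only delicate point, and the expected main obstacle, is the claim that $W$ is a unitary onto $\mathcal L(\Phi)$, in particular that it is injective. This rests on the norm identity \eqref{ad1-1} and the density of the functions $c/(e^{it}-\lambda)$ in $\mathbf L_2^n(d\mu)$, both of which are taken from the earlier theorem. Everything else is algebraic.
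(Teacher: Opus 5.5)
Your proof is correct. It is the ``direct and more precise proof'' via the representation \eqref{rep-F-phi} that the paper mentions but does not write out. The argument the paper actually gives is a different one.

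\textbf{The paper's route.} Existence comes abstractly from Stone's theorem on pseudo-resolvents. The family $(R_a)_{a\in\mathbb C\setminus\mathbb T}$ satisfies the resolvent identity, and $\ker R_a=\{0\}$ by Theorem \ref{circle}. So, once the $R_a$ are shown to be bounded, there is a (closed) operator $U$ with $(U-aI)^{-1}=R_a$. The formula for $c_F$ is then read off from \eqref{lim+}. This route relies only on the reproducing-kernel structure and never touches $\mu$. However, it does not by itself give boundedness or unitarity of $U$. Unitarity would have to be extracted from the structural identity \eqref{structure-ide}: for $F=(U-aI)X$ and $G=(U-bI)Y$ it becomes exactly $[UX,UY]=[X,Y]$.

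\textbf{Your route.} Defining $U=WSW^{-1}$ gives existence, boundedness, unitarity and the explicit formula $(UF)(\lambda)=\lambda F(\lambda)+c_F$ all at once. The price is that it depends on the integral representation and on the density argument behind the unitarity of $W$.

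\textbf{Two minor points.} First, for a matrix-valued $\mu$, justify that $c_F$ is well defined and finite by the identity $c^*\int_0^{2\pi}d\mu(e^{it})f(e^{it})=\langle f,c\rangle_\mu$ for $c\in\mathbb C^n$. This is cleaner than appealing to an inclusion $\mathbf L_2^n(d\mu)\subset\mathbf L_1^n(d\mu)$. Second, the uniqueness step does not need Theorem \ref{circle} or the surjectivity of $R_a$. $U_1-aI$ and $U_2-aI$ are both two-sided inverses of $R_a$, so they coincide.
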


          \begin{proof}
Since the operators $R_a$ satisfy the resolvent identity and have a zero kernel, the existence of $U$ follows from Stone's theorem
            (see \cite[Theorem 4.10 p. 137]{Stone}) once we prove that they are bounded in $\mathcal L(\Phi)$.
            A direct and more precise proof can be given using the representation \eqref{rep-F-phi} for elements of $\mathcal L(\Phi)$. The formula for $c_F$ was given earlier in
            \eqref{lim+}.
           \end{proof}

           For the line case version of the following result, see \cite{dbr1} and \cite{MR2069002}.

            \begin{thm}
Assume that $\det\Phi\not\equiv 0$. Then the map $F\mapsto \Phi^{-1}F$ is unitary from $\mathcal L(\Phi)$ onto $\mathcal L(\Phi^{-1})$.
\end{thm}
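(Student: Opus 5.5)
Since $\det\Phi\not\equiv 0$, the function $\Phi^{-1}$ is meromorphic in $\mathbb C\setminus\mathbb T$. In $\mathbb D$ it has positive real part, because $\mathrm{Re}\,\Phi^{-1} = \Phi^{-1}(\mathrm{Re}\,\Phi)\Phi^{-*}\ge 0$ wherever $\Phi$ is invertible. By the maximum principle applied to $\mathrm{Re}\,\Phi$, $\det\Phi$ actually vanishes nowhere in $\mathbb D$ (up to the usual care in the matrix case), so $\Phi^{-1}$ is itself a Carath\'eodory function. Its extension to $\mathbb E$ by \eqref{2-2} is compatible with inversion:
\[
-\bigl((\Phi(1/\overline\lambda))^{-1}\bigr)^*=\Bigl(-\bigl(\Phi(1/\overline{\lambda})\bigr)^*\Bigr)^{-1}=\Phi(\lambda)^{-1}.
\]
Hence $\mathcal L(\Phi^{-1})$ is well defined, and its kernel is $L_{\Phi^{-1}}$ on all of $\mathbb C\setminus\mathbb T$.

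The key step is the kernel identity
\[
\Phi(\lambda)^{-1}L_\Phi(\lambda,w)\,\Phi(w)^{-*}=\frac{\Phi(\lambda)^{-1}\bigl(\Phi(\lambda)+\Phi(w)^*\bigr)\Phi(w)^{-*}}{2(1-\lambda\overline w)}
=\frac{\Phi(w)^{-*}+\Phi(\lambda)^{-1}}{2(1-\lambda\overline w)}=L_{\Phi^{-1}}(\lambda,w),
\]
valid at all points $\lambda,w\in\mathbb C\setminus\mathbb T$ where $\Phi$ is invertible.

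I then invoke the standard result on multiplication operators between reproducing kernel Hilbert spaces. If $K_2(\lambda,w)=M(\lambda)K_1(\lambda,w)M(w)^*$, then $F\mapsto MF$ is a coisometry from $\mathcal H(K_1)$ onto $\mathcal H(K_2)$, and it is unitary exactly when it is injective. The argument is direct: the map $K_1(\cdot,w)M(w)^*c\mapsto K_2(\cdot,w)c$ is isometric by the kernel identity, it extends to an isometry $T$ from $\mathcal H(K_2)$ into $\mathcal H(K_1)$, and $T^*$ is multiplication by $M$, as one checks against the reproducing kernel. Here I take $M=\Phi^{-1}$, so the multiplication map sends $\mathcal L(\Phi)$ onto $\mathcal L(\Phi^{-1})$. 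Injectivity is immediate: $\Phi^{-1}F=0$ on an open set forces $F=0$ there, hence everywhere by analyticity. Isolated points where $\det\Phi=0$ (possible only in $\mathbb E$, by the symmetry above) are removable, since elements of $\mathcal L(\Phi^{-1})$ are analytic in $\mathbb C\setminus\mathbb T$. Alternatively, one can apply the same argument to $M=\Phi$ from $\mathcal L(\Phi^{-1})$ to $\mathcal L(\Phi)$ and observe that the two maps are mutually inverse coisometries, hence unitary.

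The main obstacle is the handling of points where $\Phi$ is not invertible. The cleanest route is to work first on the dense open set where $\det\Phi\neq0$ and then extend by analyticity. I will need to check that the range functions $\Phi^{-1}F$ are genuinely analytic across such points. This follows because the range equals $\mathcal L(\Phi^{-1})$ as a set, and $\mathcal L(\Phi^{-1})$ consists of analytic functions by \eqref{rep-F-phi} applied to $\Phi^{-1}$.
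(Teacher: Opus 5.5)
Your proposal is correct and follows the paper's route: the kernel identity $L_{\Phi^{-1}}(\lambda,w)=\Phi^{-1}(\lambda)L_\Phi(\lambda,w)(\Phi^{-1}(w))^*$ (and its mirror with $\Phi$) is combined with the standard multiplier/coisometry lemma for reproducing kernel spaces, which the paper only cites and you prove. One small correction: you showed $\det\Phi$ has no zeros in $\mathbb D$, and with $\det\Phi(\lambda)=(-1)^n\overline{\det\Phi(1/\overline{\lambda})}$ from \eqref{2-2} this rules out zeros in $\mathbb E$ as well (rather than placing them there), so $\Phi$ is invertible on all of $\mathbb C\setminus\mathbb T$ and the ``main obstacle'' you describe does not arise.
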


\begin{proof}
The equalities
\[
  \begin{split}
    L_{\Phi}({\lambda},w)&=\Phi({\lambda}) L_{\Phi^{-1}}({\lambda},w)(\Phi(w))^*,\\
    L_{\Phi^{-1}}({\lambda},w)&=\Phi^{-1}({\lambda}) L_{\Phi}({\lambda},w)(\Phi^{-1}(w))^*,
    \end{split}
  \]
  show that the operator of multiplication by $\Phi^{-1}$ (resp. by $\Phi$) is an isometry from $\mathcal L(\Phi^{-1})$ into $\mathcal L(\Phi)$
  (resp. from $\mathcal L(\Phi)$ into $\mathcal L(\Phi^{-1})$). See \cite[Exercise 7.6.6 p. 363]{CAPB_2} for more information on multipliers in reproducing kernel
  Hilbert spaces.
   \end{proof}

        Finally we recall Zaremba's formula for the reproducing kernel $K({\lambda},w)$ of a reproducing kernel Hilbert space $\mathcal H$ of $\mathbb C^n$-valued functions
        defined on a set $\Omega$. If $(f_c)_{c\in C}$ is an orthonormal basis of the latter then
        \begin{equation}
K({\lambda},w)=\sum_{c\in C} f_c({\lambda})(f_c(w))^*,\quad {\lambda},w\in\Omega.
\label{zarema-form}
\end{equation}
We will apply this result to the space $\mathcal L(\Phi)$, which consist of analytic functions and hence is separable, and thus the index set $C$ in
\eqref{zarema-form} will be taken to be $\mathbb N$ there; see \eqref{zzrem}.

\section{The model of de Branges and Shulman}
\setcounter{equation}{0}
\label{2.1}
Here we present some of the links with \cite{dbs}. It should be noted that the present section is not a repetition of this latter source, but rather sheds new light on it.
In place of extending $\Phi$ to $\mathbb E$ via \eqref{2-2} de Branges and Shulman consider in \cite{dbs} the $\mathbb C^{2n\times 2n}$-valued kernel
\begin{equation}
  \label{ephi}
  E_\Phi({\lambda},w)=\begin{pmatrix}L_\Phi({\lambda},w)&\frac{\Phi({\lambda})-\Phi(\overline{w})}{2(\lambda-\overline{w})}\\
                \frac{\Phi^\sharp({\lambda})-\Phi^\sharp(\overline{w})}{2(\lambda-\overline{w})}&L_{\Phi^\sharp}({\lambda},w)\end{pmatrix},
            \end{equation}
where \begin{equation}
  \Phi^\sharp(\lambda)=(\Phi(\overline{\lambda}))^*
\end{equation}
and the associated reproducing kernel Hilbert space of $\mathbb C^{2n}$-valued functions analytic in $\mathbb D$.\\

With $\Phi$ with integral representation \eqref{phi-herg}, we set $g_\lambda$ to be the $\mathbb C^{n\times 2n}$-valued function
\begin{equation}
  g_{\lambda}(e^{it})=\begin{pmatrix} \frac{I_n}{e^{-it} -\overline{{\lambda}}}&\frac{e^{it}I_n}{e^{it}-\overline{{\lambda}}}\end{pmatrix},\quad \lambda\in\mathbb C\setminus\mathbb T.
\end{equation}

            \begin{thm}
 The kernel $E_\Phi$ is positive definite in the open unit disk and the associated reproducing kernel Hilbert space consists of the $\mathbb C^{2n}$-valued functions $F$ of the form
 \begin{equation}
\label{f-mu}
   \mathsf F(\lambda)=\int_0^{2\pi}\begin{pmatrix} \frac{1}{e^{it}-{\lambda}}I_n\\ \frac{e^{-it}}{e^{-it}-{\lambda}}I_n\end{pmatrix}d\mu(e^{it})f(e^{it}),
\end{equation}
with $f\in\mathbf L_2(d\mu)$ and norm
\[
  \|\mathsf F\|=\|f\|_\mu.
  \]
\end{thm}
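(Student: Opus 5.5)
The plan is to show that $E_\Phi$ factors through $\mathbf L_2^n(d\mu)$, that is, to write
\[
E_\Phi({\lambda},w)=\int_0^{2\pi} k_{\lambda}(e^{it})\,d\mu(e^{it})\,k_w(e^{it})^*,
\qquad
k_{\lambda}(e^{it})=\begin{pmatrix} \frac{1}{e^{it}-{\lambda}}I_n\\ \frac{e^{-it}}{e^{-it}-{\lambda}}I_n\end{pmatrix},
\]
for ${\lambda},w\in\mathbb D$. Positivity of the kernel and the description of the space will then follow from the standard factorization argument for reproducing kernels, exactly as in the proof of \eqref{rep-F-phi}.

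\emph{Step 1: the $(1,1)$ block.} This is just \eqref{ext-2-2}, namely
\[
L_\Phi({\lambda},w)=\int \frac{d\mu}{(e^{it}-{\lambda})(e^{-it}-\overline w)}.
\]

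\emph{Step 2: the $(2,2)$ block.} From \eqref{phi-herg} we get
\[
\Phi^\sharp({\lambda})=-iA+\int \frac{e^{-it}+{\lambda}}{e^{-it}-{\lambda}}\,d\mu .
\]
So $\Phi^\sharp$ is a Carath\'eodory function whose measure is $\mu$ reflected by $t\mapsto -t$. Applying \eqref{ext-2-2} to it gives
\[
L_{\Phi^\sharp}({\lambda},w)=\int\frac{d\mu}{(e^{-it}-{\lambda})(e^{it}-\overline w)}.
\]
Since $|e^{-it}|=1$, this equals
\[
\int \frac{e^{-it}}{e^{-it}-{\lambda}}\,d\mu\,\Bigl(\frac{e^{-it}}{e^{-it}-w}\Bigr)^*,
\]
which is the required form.

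\emph{Step 3: the off-diagonal blocks.} This is the only real computation. First,
\[
\frac{e^{it}+{\lambda}}{e^{it}-{\lambda}}-\frac{e^{it}+\overline w}{e^{it}-\overline w}=\frac{2e^{it}({\lambda}-\overline w)}{(e^{it}-{\lambda})(e^{it}-\overline w)},
\]
so
\[
\frac{\Phi({\lambda})-\Phi(\overline w)}{2({\lambda}-\overline w)}=\int\frac{e^{it}\,d\mu}{(e^{it}-{\lambda})(e^{it}-\overline w)}.
\]
On the other hand,
\[
\Bigl(\frac{e^{-it}}{e^{-it}-w}\Bigr)^*=\frac{e^{it}}{e^{it}-\overline w},
\]
so the $(1,2)$ entry of $\int k_{\lambda}\,d\mu\,k_w^*$ is exactly this integral. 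The $(2,1)$ entry is handled the same way, using the reflected representation of $\Phi^\sharp$; alternatively it follows from hermitian symmetry. The main obstacle is this bookkeeping with conjugates and the reflection $t\mapsto -t$. Note that $g_{\lambda}$ is essentially $k_{\lambda}^*$ with rows rearranged, which serves as a check on the computation.

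\emph{Step 4: positivity and the space.} Positivity is immediate from the factorization:
\[
\sum_{j,k} c_j^*E_\Phi(w_j,w_k)c_k=\Bigl\|\sum_k k_{w_k}^*c_k\Bigr\|_\mu^2\ge 0 .
\]
Define $W_2f=\int k_{\lambda}\,d\mu\,f$. Its kernel consists of those $f$ orthogonal to all functions $k_w^*c$. So $W_2$ is injective modulo this kernel, and the space is the range of $W_2$ with the norm $\|W_2f\|=\|P f\|_\mu$, where $P$ is the projection onto the closed span of the $k_w^*c$. The standard argument (as in \cite[Lemma 6.5]{ad1}) gives $W_2(k_w^*c)=E_\Phi(\cdot,w)c$ together with the reproducing property.

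It remains to prove density, so that the norm is exactly $\|f\|_\mu$. The span of the $k_w^*c$ for $w\in\mathbb D$ contains the functions $\frac{1}{e^{-it}-\overline w}$ and $\frac{e^{it}}{e^{it}-\overline w}$. Their expansions in powers of $\overline w$ produce all $e^{int}$ with $n\ge1$ from the first family and all $e^{-int}$ with $n\ge0$ from the second, so together all trigonometric polynomials. These are dense in $\mathbf L_2^n(d\mu)$, by the same density statement cited before \eqref{ad1-1}. Hence $P=I$, and \eqref{f-mu} holds with $\|\mathsf F\|=\|f\|_\mu$.
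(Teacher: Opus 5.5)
Your proof is correct and follows the paper's route: your factorization $E_\Phi(\lambda,w)=\int_0^{2\pi} k_\lambda\,d\mu\,k_w^*$ is exactly the paper's identity \eqref{inner-prod}, obtained the same way (diagonal blocks from \eqref{ext-2-2} applied to $\Phi$ and $\Phi^\sharp$, off-diagonal block by direct computation); in fact $g_\lambda=k_\lambda^*$ exactly, so no rearrangement of rows is needed. Your Step 4 goes slightly beyond the written proof: it shows that all trigonometric polynomials lie in the closed span of the $k_w^*c$, which gives injectivity of $f\mapsto\mathsf F$ and hence $\|\mathsf F\|=\|f\|_\mu$ exactly, a point the paper leaves implicit.
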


\begin{proof}
For $c,d\in\mathbb C^{2n}$ we have
\begin{equation}
  \label{inner-prod}
  \langle g_wc,g_{\lambda}d\rangle_\mu=d^*E_\Phi({\lambda},w)c.
\end{equation}
The expression for the block diagonals of \eqref{ephi} follows from \eqref{ext-2-2} for $\lambda,w\in\mathbb D$ and  applied respectively to $\Phi$ and $\Phi^\sharp$. To compute
the upper right block of $E_\Phi$ we write for $u,v\in\mathbb C^n$,
  \[
    \begin{split}
      v^*\frac{\Phi(\lambda)-\Phi(\overline{w})}{2(\lambda-\overline{w})}u&=
                        \frac{1}{2(\lambda-\overline{w})}\int_0^{2\pi}v^*\left\{\frac{e^{it}+\lambda}{e^{it}-\lambda}-\frac{e^{it}+\overline{w}}{e^{it}-\overline{w}}\right\}
                                                                    u d\mu(e^{it})\\
                                                                   &=\int_0^{2\pi}v^*\left\{\frac{e^{it}}{(e^{it}-\lambda)(e^{it}-\overline{w})}\right\}ud\mu(e^{it})\\
      &=\langle \frac{e^{it}}{e^{it}-\overline{w}}u,\frac{1}{e^{-it}-\overline{\lambda}}v\rangle_\mu.
    \end{split}
  \]
  The lower left block is computed in the same way.
Formula \eqref{inner-prod} expresses $E_\phi$ in form of an inner product, from which follows the positive definiteness.
  \end{proof}

  \begin{thm}
    Let
    \[
      \mathsf F=\begin{pmatrix} \mathsf F_1\\ \mathsf F_2\end{pmatrix}
          \]
    be the decomposition of $F\in\mathcal H(E_\Phi)$ into two $\mathbb C^n$-valued blocks. The operator
    \begin{equation}
      \label{unit}
      (U\mathsf F)(\lambda)=\begin{pmatrix}(R_0\mathsf F_1)(\lambda)\\ \lambda \mathsf F _2(\lambda)+\mathsf F_1(0)
           \end{pmatrix}
    \end{equation}
      is unitary from $\mathcal H(E_\Phi)$ onto itself and corresponds to the multiplication by $e^{-it}$ in $\mathbf L_2^n(d\mu)$.    \end{thm}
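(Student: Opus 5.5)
Let $V$ denote the map $f\mapsto \mathsf F$ given by \eqref{f-mu}. The previous theorem says $V$ is unitary from $\mathbf L_2^n(d\mu)$ onto $\mathcal H(E_\Phi)$: it is isometric by \eqref{inner-prod}, and onto by definition of the space. So it suffices to verify the intertwining relation
\[
V(e^{-it}f)=U(Vf),\quad f\in\mathbf L_2^n(d\mu).
\]
Indeed, once this holds, $U=VS^{*}V^{-1}$ is unitary as the conjugate of the unitary operator $S^*$ of multiplication by $e^{-it}$. In particular $U$ maps $\mathcal H(E_\Phi)$ onto itself, and no separate argument is needed that \eqref{unit} preserves the space.

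To verify the relation, write $\mathsf F=Vf$. The blocks are
\[
\mathsf F_1(\lambda)=\int_0^{2\pi}\frac{d\mu(e^{it})f(e^{it})}{e^{it}-\lambda},\qquad
\mathsf F_2(\lambda)=\int_0^{2\pi}\frac{e^{-it}\,d\mu(e^{it})f(e^{it})}{e^{-it}-\lambda},
\]
for $\lambda\in\mathbb D$, where $0$ is an admissible point.

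For the first block, the computation in the proof of the theorem on $W(S-aI)^{-1}W^{-1}=R_a$, taken with $a=0$, gives
\[
(R_0\mathsf F_1)(\lambda)=\int_0^{2\pi}\frac{d\mu\, f}{(e^{it}-\lambda)e^{it}}=\int_0^{2\pi}\frac{d\mu\,(e^{-it}f)}{e^{it}-\lambda}.
\]
This is exactly the first block of $V(e^{-it}f)$.

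For the second block, the first block of $V$ evaluated at $0$ is $\mathsf F_1(0)=\int e^{-it}\,d\mu f$. We use the identity
\[
\lambda\,\frac{e^{-it}}{e^{-it}-\lambda}+e^{-it}=\frac{e^{-2it}}{e^{-it}-\lambda},
\]
which holds since $e^{-it}(\lambda+e^{-it}-\lambda)=e^{-2it}$. Integrating against $d\mu f$ gives
\[
\lambda\mathsf F_2(\lambda)+\mathsf F_1(0)=\int_0^{2\pi}\frac{e^{-it}\,d\mu\,(e^{-it}f)}{e^{-it}-\lambda}.
\]
This is the second block of $V(e^{-it}f)$. The two block computations together establish the intertwining relation, and hence the theorem.

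The only delicate point is well-definedness: the formula \eqref{unit} must not depend on the choice of $f$ representing $\mathsf F$. This is automatic, because $V$ is injective as an isometry. The density result quoted before the description of $\mathcal L(\Phi)$ is what justifies the unitarity of $V$ in the first place. The remaining steps are routine algebra.
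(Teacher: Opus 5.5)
Your proposal is correct and follows the paper's proof. The paper also verifies, block by block, that \eqref{unit} applied to $\mathsf F$ in \eqref{f-mu} gives the function associated with $e^{-it}f(e^{it})$, using the same two computations; your explicit conjugation $U=VS^{*}V^{-1}$ and the injectivity remark only make precise what the paper compresses into ``it suffices to remark''.
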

    \begin{proof}
      It suffices to remark that \eqref{unit} corresponds to the function $e^{-it}f(e^{it})$ in $\mathbf L_2(d\mu)$ in \eqref{f-mu}. More precisely we have for $\lambda\not=0$
      \[
        \begin{split}
          (R_0\mathsf F_1)(\lambda)&=\frac{1}{\lambda}\int_0^{2\pi}\left\{\frac{1}{e^{it}-{\lambda}}-\frac{1}{e^{it}}\right\}d\mu(e^{it})f(e^{it})\\
                                   &=\int_0^{2\pi}\left\{\frac{1}{e^{it}-{\lambda}}\frac{1}{e^{it}}\right\}d\mu(e^{it})f(e^{it})\\
                    &=\int_0^{2\pi}\frac{1}{e^{it}-{\lambda}}d\mu(e^{it})e^{-it}f(e^{it})
        \end{split}
      \]
      and
      \[
        \begin{split}
          \lambda\mathsf F_2(\lambda)+\mathsf F_1(0)&=\int_0^{2\pi}\left\{\frac{\lambda e^{-it}}{e^{-it}-\lambda}+\frac{1}{e^{it}}\right\}d\mu(e^{it})f(e^{it})\\
                                                   &=\int_0^{2\pi}\frac{e^{-it}}{(e^{-it}-\lambda)e^{it}}d\mu(e^{it})f(e^{it})\\
                    &=\int_0^{2\pi}\frac{e^{-it}}{(e^{-it}-\lambda)}d\mu(e^{it})e^{-it}f(e^{it}).
        \end{split}
        \]
      \end{proof}
\section{The model for pairs of unitary operators}
\setcounter{equation}{0}
\label{sec-3}

The proof of the following theorem from \cite{MR1638044} is sketchy there and we have chosen to present a more detailed proof of the result.
\begin{thm}
  Let $U_+$ and  $U_-$ be two unitary operators in a Hilbert space $\mathcal H$ such that
  \begin{equation}
\label{ze-inter}
    \bigcap_{\lambda\in\mathbb C\setminus\mathbb T}\ker\left((U_+-\lambda I_{\mathcal H})^{-1}-(U_--\lambda I_{\mathcal H})^{-1}\right)=\left\{0\right\}
  \end{equation}
  and for which there exists $n\in\mathbb N$ such that
  \begin{equation}
    \label{rtg}
    {\rm dim}\,{\rm ran}\,\left((U_+-\lambda I_{\mathcal H})^{-1}-(U_--\lambda I_{\mathcal H})^{-1}\right)=n,\quad\forall \lambda\in\mathbb C\setminus\mathbb T.
  \end{equation}
 Then there exist a $\mathbb C^{n\times n}$-valued Carath\'eodory function $\Phi$ and maps
  \[
    W_+\,\,:\mathcal H\,\,\,\longrightarrow\,\,\mathcal L(\Phi)\quad and\quad
        W_-\,\,:\mathcal H\,\,\,\longrightarrow\,\,\mathcal L(\Phi^{-1})
      \]
      such that the diagram
\[
\begin{tikzcd}
\mathcal H \arrow[r, "W_+"] \arrow[d, "Id"'] & \mathcal L(\Phi) \\
\mathcal H \arrow[r, "W_-"]                  & \mathcal L(\Phi^{-1})\arrow[u,"M_{\Phi}" ']
\end{tikzcd}
\]
is commutative,
  \begin{equation}
    \label{comm}
W_+=M_\Phi W_-,
\end{equation}
and with the following properties, where $z\in\mathbb C\setminus\mathbb T$:
\begin{eqnarray}
  \label{w12}
  W_+(U_+-zI_{\mathcal H})^{-1}W_+^{-1}&=&R_z\quad {in}\quad\mathcal L(\Phi),\\
  W_-(U_--zI_{\mathcal H})^{-1}W_-^{-1}&=&R_z\quad {in}\quad\mathcal L(\Phi^{-1}).
\label{w22}
\end{eqnarray}
\end{thm}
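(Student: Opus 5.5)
The plan is to realize both spaces concretely through a rank-$n$ factorization coming from \eqref{rtg}. By the resolvent identity,
\[
(U_+-\lambda I_{\mathcal H})^{-1}-(U_--\lambda I_{\mathcal H})^{-1}=(U_+-\lambda I_{\mathcal H})^{-1}(U_--U_+)(U_--\lambda I_{\mathcal H})^{-1}.
\]
The outer factors are invertible, so \eqref{rtg} says exactly that $U_+-U_-$ has rank $n$. I would factor it as $U_+-U_-=2\,C_+^*C_-$, with $C_\pm:\mathcal H\to\mathbb C^n$ both onto. The normalization of this factorization, possibly involving $U_\pm$ on one side, will be fixed later so that the functions below are mutually inverse. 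I then define
\[
(W_\pm h)(\lambda)=C_\pm(U_\pm-\lambda I_{\mathcal H})^{-1}h,\qquad \lambda\in\mathbb C\setminus\mathbb T.
\]

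The first step is to identify the kernels. For a unitary $U$, a direct computation from $(U^*-\overline wI)(U-\lambda I)=(1+\lambda\overline w)I-\overline wU-\lambda U^*$ gives
\[
(1-\lambda\overline w)(U-\lambda I)^{-1}(U^*-\overline wI)^{-1}=\tfrac12\Big[(U+\lambda I)(U-\lambda I)^{-1}+\big((U+wI)(U-wI)^{-1}\big)^*\Big].
\]
Hence $W_+$ is a contraction-free map onto the reproducing kernel space with kernel $L_{\Phi_+}$, where $\Phi_+(\lambda)=C_+(U_++\lambda I)(U_+-\lambda I)^{-1}C_+^*$. This function is Carath\'eodory in $\mathbb D$, and its extension to $\mathbb E$ agrees with \eqref{2-2} automatically because $U_+$ is unitary. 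The same holds for $W_-$ with $\Phi_-$ built from $U_-$ and $C_-$.

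Next I would show that $W_\pm$ is isometric, i.e.\ injective. The kernel of $W_+$ is $\bigcap_\lambda\ker C_+(U_+-\lambda I)^{-1}$. This intersection is a closed subspace invariant under all resolvents of $U_+$, and on it $U_+$ and $U_-$ coincide, since $C_+^*C_-$ vanishes there after conjugation. It is therefore contained in the intersection in \eqref{ze-inter}, which is $\{0\}$. The argument for $W_-$ is symmetric. The formulas \eqref{w12} and \eqref{w22} then follow from the resolvent identity alone:
\[
C(U-\lambda I)^{-1}(U-zI)^{-1}h=\frac{C(U-\lambda I)^{-1}h-C(U-zI)^{-1}h}{\lambda-z}=(R_zWh)(\lambda).
\]

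The main obstacle is the link $W_+=M_\Phi W_-$ together with $\Phi_-=\Phi_+^{-1}$; this is where the normalization of the factorization must be chosen. I would set $\Phi=\Phi_+$ and use the resolvent identity once more:
\[
C_+(U_+-\lambda I)^{-1}=C_+(U_--\lambda I)^{-1}-2\,C_+(U_+-\lambda I)^{-1}C_+^*\,C_-(U_--\lambda I)^{-1}.
\]
The aim is to rewrite the right side as $\Phi_+(\lambda)C_-(U_--\lambda I)^{-1}$ up to constant terms. This would be done by expressing $C_+$ through $C_-$ using $U_+-U_-=2C_+^*C_-$ and the identity $(U+\lambda)(U-\lambda)^{-1}=I+2\lambda(U-\lambda)^{-1}$. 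I expect to need to adjust the factorization, for instance writing $U_+U_-^*-I$ or $U_-^*U_+-I$ (also of rank $n$) as $2C_+^*C_-$, so that the constant terms cancel. The analogous computation with the roles of $\pm$ exchanged then yields $\Phi_-\Phi_+=I_n$. Once this holds, $\det\Phi\not\equiv0$, and the theorem on multiplication by $\Phi^{-1}$ confirms that $M_\Phi$ is unitary from $\mathcal L(\Phi^{-1})$ onto $\mathcal L(\Phi)$. This makes the diagram commute.
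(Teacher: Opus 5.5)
The parts you carry out are correct. The kernel identity for a unitary $U$ identifies $W_\pm\mathcal H$ with $\mathcal L(\Phi_\pm)$. Your injectivity argument via \eqref{ze-inter} works. And \eqref{w12}--\eqref{w22} follow from the resolvent identity exactly as in the paper. The gap is the step the theorem rests on: $W_+=M_\Phi W_-$, with $\Phi_-$ realizing $\Phi^{-1}$. You only announce this step, and the concrete choices you make for it fail.

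\emph{The factorization of $U_+-U_-$ cannot work.} Suppose $C_+(U_+-\lambda I)^{-1}=\Phi(\lambda)C_-(U_--\lambda I)^{-1}$. Apply both sides to $(U_--\lambda I)g$ with $g\in\ker C_-$ and let $\lambda\to\infty$. This gives $\ker C_-\subset\ker C_+$, hence equality, since both kernels have codimension $n$. Your factorization $U_+-U_-=2C_+^*C_-$ gives $\ker C_-=\ker(U_+-U_-)$ but $\ker C_+=\ker(U_+^*-U_-^*)=U_+\ker(U_+-U_-)$. These differ in general, and then no $\Phi$ can make the diagram commute.

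\emph{A scalar example shows the phase and the constant both matter.} Take $\mathcal H=\mathbb C$, $U_+=1$, $U_-=e^{i\theta}$ with $\theta\ne 0$. The intertwining forces $\Phi(\lambda)=\frac{C_+}{C_-}\frac{e^{i\theta}-\lambda}{1-\lambda}$. Requiring $L_\Phi=L_{\Phi_+}$ then forces $2\overline{C_+}C_-=e^{i\theta}-1=U_+^*U_--1$.
\begin{itemize}
\item Your choice gives the opposite sign, so $\Phi$ would have negative real part.
\item Your alternatives $U_+U_-^*-I$ and $U_-^*U_+-I$ both give $e^{-i\theta}-1$, which is wrong unless $e^{i\theta}=-1$.
\item The correct function is $|C_+|^2\bigl(\frac{1+\lambda}{1-\lambda}-i\cot\frac{\theta}{2}\bigr)$, so setting $\Phi=\Phi_+$ is also wrong. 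Without the imaginary constant, $\Phi_+^{-1}$ has its pole at $-1$ instead of $e^{i\theta}$, and $\Phi_-\Phi_+\ne 1$.
\end{itemize}

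\emph{The missing idea is the paper's Step 1.} Since $U_+^*U_-$ is unitary, $I-U_+^*U_-$ is a normal operator of rank $n$. The spectral theorem writes it as $-C^*BC$, with one and the same $C$ on both sides and $B$ unitary satisfying $B^{-1}+B^{-*}=-CC^*$. This identity holds because the nonzero eigenvalues $\lambda_j=1-e^{i\theta_j}$ satisfy $|\lambda_j|^2=2\,{\rm Re}\,\lambda_j$.

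Set $\Phi(\lambda)=B^{-1}+C(I-\lambda U_+^*)^{-1}C^*$. With $C_+=C/\sqrt2$, this is your $\Phi_+$ plus the skew-Hermitian constant $\frac12(B^{-1}-B^{-*})$. The identity $B^{-1}+B^{-*}=-CC^*$ yields both the kernel formula \eqref{eq-0001} and the explicit inverse $\Phi^{-1}(\lambda)=B+BC(I-\lambda U_-^*)^{-1}C^*B^*$. The intertwining, with $W_+\propto C(U_+-\lambda I)^{-1}$ and $W_-\propto BC(U_--\lambda I)^{-1}$, is then a direct computation using $C^*BC=U_+^*U_--I$.

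In your notation, the right normalization is $U_+^*U_--I=2C_+^*C_-$. Normality then gives $C_-=MC_+$, and unitarity of $U_+^*U_-$ gives $M^{-1}+M^{-*}=-2C_+C_+^*$. Neither ingredient appears in your proposal. Without them, neither the commutativity of the diagram nor the relation between $\Phi$ and $\Phi_-$ is established.
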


\begin{proof} We proceed in a number of steps. For the first step, besides the proof in \cite{MR1638044} we refer when $n=1$ to the 1962 paper of Krein \cite{krein-trace2},
  also appearing in the collected works \cite[p. 342]{MR1710394}. See also the work \cite{MR3861897} and the citations therein.\\

  STEP 1: {\sl One can write
  \begin{equation}
    I_{\mathcal H}-U_+^*U_-=-C^*BC,
\label{law-1}
  \end{equation}
  where $C$ is a continuous operator from $\mathcal H$ into $\mathbb C^n$ and $B\in\mathbb C^{n\times n}$ is unitary such that}

  \begin{equation}
    \label{bbcc}
    B^{-1}+B^{-*}=-CC^*.
    \end{equation}

    We have
    \[
      (I_{\mathcal H}-U_+^*U_-)  (I_{\mathcal H}-U_+^*U_-)^*=2I_{\mathcal H}-(U_+^*U_-+U_-^*U_+),
    \]
    and so the operator $I_{\mathcal H}-U_+^*U_-$ is normal. Since
    \[
I_{\mathcal H}-U_+^*U_-=(U_-^*-U_+^*)U_-
\]
setting $\lambda=0$ in \eqref{rtg} shows  $I_{\mathcal H}-U_+^*U_-$  has finite rank. The spectral theorem allows us to write
    \[
I_{\mathcal H}-U_+^*U_-=\sum_{j=1}^N \frac{\lambda_j}{|\lambda_j|}|\lambda_j|E_j,
    \]
    where $E_1,\ldots, E_N$ are finite rank orthogonal projections, say of rank $d_1,\ldots, d_N$, and $\lambda_1,\ldots, \lambda_N$ are complex numbers different from
    $0$. Writing $E_j=C_j^*C_j$, where $C_j$ is from $\mathcal H$ into $\mathbb C^{d_j}$ we obtain the result with
\[
C=\begin{pmatrix}\sqrt{|\lambda_1|}C_1&\cdots &\sqrt{|\lambda_N|}C_N\end{pmatrix}\quad{\rm and}\quad
B=-{\rm diag}\, (\frac{\lambda_1}{|\lambda_1|}I_{d_1},\ldots, \frac{\lambda_N}{|\lambda_N|}I_{d_N}).
  \]
Note that taking adjoint of \eqref{law-1} we have

  \begin{equation}
    I_{\mathcal H}-U_-^*U_+=-C^*B^*C .
\label{law-2}
  \end{equation}

STEP 2: {\sl Set
  \begin{equation}
    \label{phi-model}
    \Phi(\lambda)=B^{-1}+C(I_{\mathcal H}-\lambda U_+^*)^{-1}C^*.
  \end{equation}
  Then}
    \begin{equation}
      \Phi^{-1}(\lambda)=B+BC(I_{\mathcal H}-\lambda U_-^*)^{-1}C^*B^*.
\label{inv-inv}
    \end{equation}

    We compute
    \[
      \begin{split}
        \Phi(\lambda)(B+BC(I_{\mathcal H}-\lambda U_-^*)^{-1}C^*B^*)&=\\
        &\hspace{-5cm}=(B^{-1}+C(I_{\mathcal H}-\lambda U_+^*)^{-1}C^*)(       B+BC(I_{\mathcal H}-\lambda U_-^*)^{-1}C^*B^*)\\
        &\hspace{-5cm}=I_n+C(I_{\mathcal H}-\lambda U_+^*)^{-1}C^*B+\\
        &\hspace{-4.5cm}+C(I_{\mathcal H}-\lambda U_-^*)^{-1}C^*B^*+\\
        &\hspace{-4.5cm}+C(I_{\mathcal H}-\lambda U_+^*)^{-1}C^*BC(I_{\mathcal H}-\lambda U_-^*)^{-1}C^*B^*\\
        &\hspace{-5cm}=I_n+C(I_{\mathcal H}-\lambda U_+^*)^{-1}\left\{C^*B+(I_{\mathcal H}-\lambda U_+^*)(I_{\mathcal H}-\lambda U_-^*)^{-1}C^*B^*+\right.\\
        &\hspace{-4.5cm}\left.+C^*BC(I_{\mathcal H}-\lambda U_-^*)^{-1}C^*B^*\right\}\\
        &\hspace{-5cm}=I_n+C(I_{\mathcal H}-\lambda U_+^*)^{-1}\times\\
        &\hspace{-4.5cm}\times\left\{C^*B+(I_{\mathcal H}-\lambda U_+^*+C^*BC)(I_{\mathcal H}-\lambda U_-^*)^{-1}C^*B^*\right\}.
            \end{split}
          \]
          But
          \[
            \begin{split}
             C^*B+ (I_{\mathcal H}-\lambda U_+^*+C^*BC)(I_{\mathcal H}-\lambda U_-^*)^{-1}C^*B^*&=\\
              &\hspace{-9cm}       =C^*B+      (U_+^*U_--\lambda U_+^*)(I_{\mathcal H}-\lambda U_-^*)^{-1}C^*B^*\\
              &\hspace{-9cm}=C^*B+       U_+^*(U_--\lambda I_{\mathcal H})(I_{\mathcal H}-\lambda U_-^*)^{-1}C^*B^*\\
              &\hspace{-9cm}=C^*B+    U_+^*U_-C^*B^*\\
              &\hspace{-9cm}=C^*B+      (I_{\mathcal H}+C^*BC)C^*B^*\\
              &\hspace{-9cm}=C^*\underbrace{\left(B+B^* +BCC^*B^*\right)}_{=0\,\,\,{\rm by}\,\, \eqref{bbcc}}\\
              &\hspace{-9cm}=0.
            \end{split}
            \]

STEP 3: {\sl We have the reproducing kernel formulas

  \begin{eqnarray}
    \label{eq-0001}
    \hspace{5mm}\frac{\Phi(\lambda)+(\Phi(w))^*}{2(1-\lambda\overline{w})}&=&\frac{1}{2}
                                                                              C(I_{\mathcal H}-\lambda U_+^*)^{-1}(I_{\mathcal H}-wU_+^*)^{-*}C^*,\\
    \hspace{5mm} \frac{\Phi^{-1}(\lambda)+(\Phi^{-1}(w))^*}{2(1-\lambda\overline{w})}&=&\frac{1}{2}
                                                                                         BC(I_{\mathcal H}-\lambda U_-^*)^{-1}(I_{\mathcal H}-w U_-^*)^{-*}C^*B^*
                                                                                        \nonumber
  \end{eqnarray}
  and in particular it satisfies}\eqref{2-2}.\smallskip

The identity \eqref{bbcc} plays a key role in the computation. We have
\[
  \begin{split}
    \Phi(\lambda)+(\Phi(w))^*&=B^{-1}+C(I_{\mathcal H}-\lambda U_+^*)^{-1}C^*+B^{-*}+C(I_{\mathcal H}-w U_+^*)^{-*}C^*\\
    &=-CC^*+C(I_{\mathcal H}-\lambda U_+^*)^{-1}C^*+C(I_{\mathcal H}-w U_+^*)^{-*}C^*\\
    &=C(I_{\mathcal H}-\lambda U_+^*)^{-1}\left\{-(I_{\mathcal H}-\lambda U_+^*)(I_{\mathcal H}-w U_+^*)^{-*}+\right.\\
    &\left.\hspace{5mm}+(I_{\mathcal H}-\lambda U_+^*)      +(I_{\mathcal H}-w U_+^*)^*
    \right\}(I_{\mathcal H}- w U_+^*)^{-*}C^*\\
    &=(1-\lambda\overline{w})C(I_{\mathcal H}-\lambda U_+^*)^{-1}(I_{\mathcal H}- w U_+^*)^{-*}C^*.
  \end{split}
  \]
  The proof for $\Phi^{-1}$ is similar and uses formula \eqref{inv-inv}.\\

STEP 4: {\sl The space $\mathcal L(\Phi)$ consists of the functions of the form
\[
  F(\lambda)=C(I_{\mathcal H}-\lambda U_+^*)^{-1}f,
\]
  with $f\in\mathcal H$ and norm $\|F\|=\|f\|_{\mathcal H}$.}\smallskip

From \eqref{eq-0001} it is enough to check that
\[
  C(I_{\mathcal H}-\lambda U_+^*)^{-1}f\equiv 0\quad\Longrightarrow\quad f=0.
  \]
Let therefore $f$ satisfying this condition. We have then
  \[
(U_--\lambda I_{\mathcal H})^{-1}U_-C^*B^*C(U_+-\lambda I_{\mathcal H})^{-1}U_+f\equiv 0.
    \]
Using \eqref{law-2} we rewrite this equality as
  \[
(U_--\lambda I_{\mathcal H})^{-1}(U_--U_+)(U_+-\lambda I_{\mathcal H})^{-1}U_+f\equiv 0.
    \]
    Writing
    $$U_--U_+=U_--\lambda I_{\mathcal H}+\lambda I_{\mathcal H}-U_+$$
    we see  from \eqref{ze-inter} that $U_+f=0$, and so $f=0$.\\

    It follows from the previous step that the formulas
  \begin{eqnarray}
    (W_+f)(\lambda)&=&\frac{1}{\sqrt{2}}C(I_{\mathcal H}-\lambda U_+^*)^{-1}U_+^*f,\\
    (W_-f)(\lambda)&=&\frac{1}{\sqrt{2}}C(I_{\mathcal H}-\lambda U_-^*)^{-1}U_-^*f
  \end{eqnarray}
  define unitary maps from $\mathcal H$ onto $\mathcal L(\Phi)$ and $\mathcal L(\Phi^{-1})$, respectively.\\

  STEP 4: {\sl \eqref{w12}-\eqref{w22} hold.}\smallskip

  We prove \eqref{w12}. The proof of \eqref{w22} is similar. Let $f\in\mathcal H$. We write
  \[
    \begin{split}
     ( W_+(U_+-z I_{\mathcal H})^{-1}f)(\lambda)&=C(I_{\mathcal H}-\lambda U_+^*)^{-1}U_+^*(U_+-z I_{\mathcal H})^{-1}f\\
      &=C(U_+-\lambda I_{\mathcal H})^{-1}(U_+-z I_{\mathcal H})^{-1}f\\
      &=C\left\{\frac{(U_+-\lambda I_{\mathcal H})^{-1}-(U_+-z I_{\mathcal H})^{-1}}{\lambda-z}\right\}f\\
      &=C\left\{\frac{(I_{\mathcal H}-\lambda U_+^*)^{-1}-(I_{\mathcal H}-zU_+^*)^{-1}}{\lambda-z}\right\}U_+^*f\\
      &=(R_zW_+f)(\lambda).
    \end{split}
    \]

  STEP 5: {\sl We prove \eqref{comm}:}\smallskip

  Let $f\in\mathcal H$. We have:

  \[
    \begin{split}
      \Phi(\lambda)(W_-f)(\lambda)&=(B^{-1}+C(I_{\mathcal H}-\lambda U_+^*)^{-1}C^*)BC(I_{\mathcal H}-\lambda U_-^*)^{-1}U_-^*f\\
      &=C(I_{\mathcal H}-\lambda U_-^*)^{-1}U_-^*f+\\
      &\hspace{5mm}+C(I_{\mathcal H}-\lambda U_+^*)^{-1}\overbrace{C^*BC}^{U_+^*U_--I_{\mathcal H}}(I_{\mathcal H}-\lambda U_-^*)^{-1}U_-^*f\\
      &=C(I_{\mathcal H}-\lambda U_+^*)^{-1}\left\{I_{\mathcal H}-\lambda U_+^*+U_+^*U_--I_{\mathcal H}\right\}\times\\
      &\hspace{5mm}\times(I_{\mathcal H}-\lambda U_-^*)^{-1}U_-^*f\\
      &=C(I_{\mathcal H}-\lambda U_+^*)^{-1}U_+^*\underbrace{\left\{U_--\lambda I_{\mathcal H}\right\}(I_{\mathcal H}-\lambda U_-^*)^{-1}U_-^*}_{I_{\mathcal H}}f\\
      &=(W_+f)(\lambda).
    \end{split}
  \]
\end{proof}

We now consider the inverse problem: Starting from $\Phi$ can we find $U_\pm$ satisfying \eqref{ze-inter} and \eqref{rtg}. The result is presented in Theorem
\ref{isp}. We first need a preliminary result and set
  \begin{equation}
    \Delta(z)=(U_+-zI_{\mathcal H})^{-1}-(U_--zI_{\mathcal H})^{-1}.
  \end{equation}

\begin{prop}
  In the above notation,
let $\lambda,z\in\mathbb C\setminus\mathbb T$. Then:
  \begin{equation}
    \label{delta-1}
(W_+\Delta(z)W_+^{-1}F)(\lambda)=(R_z\Phi(\lambda))\Phi^{-1}(z)F(z).
\end{equation}
  \label{prop11}
  \end{prop}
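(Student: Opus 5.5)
The plan is to split $\Delta(z)$ into its two resolvents and conjugate each one separately by $W_+$. The $U_+$ term is handled directly by \eqref{w12}. The $U_-$ term is transported through $\mathcal L(\Phi^{-1})$ by the commutation relation \eqref{comm}. The result should then follow from an elementary difference-quotient computation.

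First, recall from the proof of the model theorem that $W_+$ and $W_-$ are unitary onto $\mathcal L(\Phi)$ and $\mathcal L(\Phi^{-1})$ respectively, so their inverses make sense. Also, $\Phi(z)$ is invertible for every $z\in\mathbb C\setminus\mathbb T$, since formula \eqref{inv-inv} exhibits $\Phi^{-1}$ as analytic there. By \eqref{w12},
\[
W_+(U_+-zI_{\mathcal H})^{-1}W_+^{-1}=R_z\quad\text{in }\mathcal L(\Phi).
\]
Since $W_+=M_\Phi W_-$ by \eqref{comm}, we have $W_+^{-1}=W_-^{-1}M_{\Phi^{-1}}$, where by the earlier theorem on $F\mapsto\Phi^{-1}F$ this multiplication is unitary from $\mathcal L(\Phi)$ onto $\mathcal L(\Phi^{-1})$. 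Combining this with \eqref{w22} gives
\[
W_+(U_--zI_{\mathcal H})^{-1}W_+^{-1}=M_\Phi\, W_-(U_--zI_{\mathcal H})^{-1}W_-^{-1}\,M_{\Phi^{-1}}=M_\Phi R_z M_{\Phi^{-1}}.
\]
Hence, for $F\in\mathcal L(\Phi)$,
\[
(W_+\Delta(z)W_+^{-1}F)=R_zF-\Phi\,R_z(\Phi^{-1}F).
\]

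Now compute for $\lambda\neq z$:
\[
\Phi(\lambda)\frac{\Phi^{-1}(\lambda)F(\lambda)-\Phi^{-1}(z)F(z)}{\lambda-z}=\frac{F(\lambda)-\Phi(\lambda)\Phi^{-1}(z)F(z)}{\lambda-z}.
\]
Subtracting this from $(F(\lambda)-F(z))/(\lambda-z)$ leaves
\[
\frac{\Phi(\lambda)\Phi^{-1}(z)F(z)-F(z)}{\lambda-z}=\frac{\Phi(\lambda)-\Phi(z)}{\lambda-z}\,\Phi^{-1}(z)F(z),
\]
where we wrote $F(z)=\Phi(z)\Phi^{-1}(z)F(z)$. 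The right-hand side is exactly $(R_z\Phi)(\lambda)\Phi^{-1}(z)F(z)$, which is \eqref{delta-1}. The case $\lambda=z$ follows by continuity, or equivalently by the derivative convention in the definition of $R_z$, since both sides are analytic in $\lambda$.

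There is no real obstacle; the only points needing care are bookkeeping. One is to check that the normalization (the factor $1/\sqrt2$) in $W_\pm$ is the same one for which Step 5 establishes $W_+=M_\Phi W_-$. The other is to check that $M_{\Phi^{-1}}$ maps $\mathcal L(\Phi)$ into $\mathcal L(\Phi^{-1})$, so that \eqref{w22} may be applied to $\Phi^{-1}F$.
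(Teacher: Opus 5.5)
Your proposal is correct and is essentially the paper's own argument. Both rewrite $W_+(U_--zI_{\mathcal H})^{-1}W_+^{-1}$ as $M_\Phi R_z M_{\Phi^{-1}}$ using \eqref{comm} and \eqref{w22}, expand the difference quotient to get $R_zF-(R_z\Phi)\Phi^{-1}(z)F(z)$, and compare with $R_zF$ from \eqref{w12}; your side remarks on invertibility of $\Phi(z)$ via \eqref{inv-inv} and the point $\lambda=z$ are care the paper skips, and your normalization check is well aimed, since the $\sqrt2$ is harmless but the paper's displayed $W_-$ drops the factor $B$ used in Step 5, with \eqref{comm} holding once $W_-f=\frac{1}{\sqrt2}BC(I_{\mathcal H}-\lambda U_-^*)^{-1}U_-^*f$.
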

  \begin{proof}

To prove \eqref{delta-1} we take use of \eqref{comm} and write
\[
\begin{split}
  (W_+(U_--zI_{\mathcal H})^{-1}W_+^{-1}F)(\lambda)&=\Phi(\lambda)(\underbrace{( W_-(U_--zI_{\mathcal H})^{-1}W_-^{-1})}_{R_z}(\Phi^{-1}F))(\lambda)\\
  &\hspace{-2cm}=\Phi(\lambda)\frac{\Phi^{-1}(\lambda)F(\lambda)-\Phi^{-1}(z)F(z)}{\lambda-z}\\
  &\hspace{-2cm}=\frac{F(\lambda)-F(z)}{\lambda-z}-\frac{\Phi(\lambda)-\Phi(z)}{\lambda-z}\Phi^{-1}(z)F(z)\\
  &\hspace{-2cm}=(R_zF)(\lambda)-(R_z\Phi(\lambda))\Phi^{-1}(z)F(z),\\
  &\hspace{-2cm}=(W_+(U_+-zI_{\mathcal H})^{-1}W_+^{-1}F)(\lambda)-(R_z\Phi(\lambda))\Phi^{-1}(z)F(z),\\
\end{split}
\]
and the result follows.
    \end{proof}

We can now state and prove:

\begin{thm}
  \label{isp}
  Let $\Phi$ be a $\mathbb C^{n\times n}$-valued Carath\'eodory function such that
\[
  \det \mbox{{\rm Re}}\,\Phi({\lambda})\not\equiv0
\]
  in $\mathbb D$ and extended to $\mathbb E$ by
  the anti-symmetry condition \eqref{2-2}. Then the operators $U_+$ and $U_-$ defined by the resolvent operators $R_z$ in $\mathcal L(\Phi)$ and $\mathcal L(\Phi^{-1})$
  satisfy \eqref{ze-inter} and \eqref{rtg}.
\end{thm}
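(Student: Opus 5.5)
The argument works entirely inside $\mathcal L(\Phi)$. First I fix the setting. The hypothesis $\det{\rm Re}\,\Phi\not\equiv 0$ gives $\det\Phi\not\equiv0$, so $\Phi^{-1}$ is meromorphic in $\mathbb D$. It is in fact analytic with positive real part there, hence a Carath\'eodory function. By \eqref{2-2} the same holds on $\mathbb E$.

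The unitary operators come from the theorem following Theorem \ref{circle}. It provides a unitary $U$ in $\mathcal L(\Phi)$ with $(U-aI)^{-1}=R_a$; call this $U_+$. Likewise there is a unitary $\widetilde U_-$ in $\mathcal L(\Phi^{-1})$ with the same property. Since $M_\Phi$ is unitary from $\mathcal L(\Phi^{-1})$ onto $\mathcal L(\Phi)$, I transport $\widetilde U_-$ and set $U_-=M_\Phi\widetilde U_-M_\Phi^{-1}$ on $\mathcal H=\mathcal L(\Phi)$, with $W_+=I$ and $W_-=M_{\Phi}^{-1}$. The computation in the proof of Proposition \ref{prop11} uses only these relations, so it applies verbatim and gives
\[
(\Delta(z)F)(\lambda)=\frac{\Phi(\lambda)-\Phi(z)}{\lambda-z}\,\Phi^{-1}(z)F(z).
\]

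\textbf{Rank condition \eqref{rtg}.} Fix $z\in\mathbb C\setminus\mathbb T$ where $\Phi(z)$ is invertible. The formula shows that ${\rm ran}\,\Delta(z)$ is contained in the span of the columns of $(R_z\Phi)(\lambda)$, so its dimension is at most $n$. For equality I need two facts. First, the evaluation map $F\mapsto F(z)$ from $\mathcal L(\Phi)$ onto $\mathbb C^n$ is surjective. This holds because $L_\Phi(z,z)=\frac{{\rm Re}\,\Phi(z)}{1-|z|^2}$ is invertible, using the sign convention on $\mathbb E$ to handle $|z|>1$. Second, the columns of $R_z\Phi$ are linearly independent as functions. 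Suppose $(R_z\Phi)(\lambda)c\equiv0$. Then $\Phi(\lambda)c=\Phi(z)c$ is constant. Using the kernel, $L_\Phi(\lambda,w)$ is then annihilated in a way that forces $c^*({\rm Re}\,\Phi)c$ to be proportional to $1-|\lambda|^2$ and constant, hence $0$. By the Herglotz representation this gives $\mu c=0$, so $c^*{\rm Re}\,\Phi\, c\equiv0$. Constant $\Phi c$ together with $\Phi(\lambda)$ being a positive-real matrix is not yet enough to conclude $c=0$, so here I use the hypothesis more carefully. Writing $\Phi c=iEc+\int\frac{e^{it}+\lambda}{e^{it}-\lambda}d\mu\, c$, constancy forces $\mu c=0$. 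Then ${\rm Re}\,\Phi(\lambda)c=0$ for all $\lambda$, i.e. $c\in\ker{\rm Re}\,\Phi(\lambda)$ identically. This contradicts $\det{\rm Re}\,\Phi\not\equiv0$ unless $c=0$. The points where $\Phi(z)$ is singular are exceptional; I handle them by the same argument applied to $\Phi^{-1}$ via the symmetric formula $\Delta(z)=-M_\Phi(\ldots)$, or by noting that ${\rm Re}\,\Phi(z)>0$ on a set where invertibility is automatic.

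\textbf{Trivial intersection \eqref{ze-inter}.} Let $F$ satisfy $\Delta(z)F=0$ for all $z$. By the independence just established, $\Phi^{-1}(z)F(z)=0$, so $F(z)=0$ for all $z$ outside a discrete set, and therefore $F=0$.

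The main obstacle is the linear independence of the columns of $R_z\Phi$ uniformly in $z$, i.e. controlling which constant vectors $c$ make $\Phi c$ constant. This is exactly where $\det{\rm Re}\,\Phi\not\equiv0$ must be used, through the Herglotz measure. A secondary issue is the points $z$ where $\Phi(z)$ fails to be invertible; I expect these to be ruled out on $\mathbb D$ since ${\rm Re}\,\Phi\ge0$ with generic invertibility, and the same holds on $\mathbb E$ by \eqref{2-2}.
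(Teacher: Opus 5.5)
You follow the paper's route: take $\mathcal H=\mathcal L(\Phi)$ with $W_+=I$, read ${\rm rank}\,\Delta(z)\le n$ off \eqref{delta-1}, and get surjectivity of $F\mapsto F(z)$ from the invertibility of $L_\Phi(z,z)$. You also correctly notice something the paper's proof passes over: equality in \eqref{rtg}, and the conclusion $F(z)=0$ for \eqref{ze-inter}, also require $c\mapsto (R_z\Phi)c$ to be injective.

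Your argument for that injectivity, however, does not hold up. The step ``constancy of $\Phi c$ forces $\mu c=0$'' via the Herglotz formula is false if constancy is only used on $\mathbb D$. For $\Phi\equiv I_n$ one has $d\mu=\frac{dt}{2\pi}I_n\neq 0$ and ${\rm Re}\,\Phi=I_n$, yet $\Phi c$ is constant on $\mathbb D$ for every $c$. The preceding ``constant, hence $0$'' is also unsupported. What makes the claim true is that $(R_z\Phi)c\equiv 0$ holds on \emph{both} components of $\mathbb C\setminus\mathbb T$. This is exactly where \eqref{2-2} must enter, and your sketch never uses it. Concretely, set $v=\Phi(z)c$. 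Then $\Phi(\lambda)c=v$ for $\lambda\in\mathbb D$, and evaluating at $1/\overline{\lambda}\in\mathbb E$ with \eqref{2-2} gives $(\Phi(\lambda))^*c=-v$. Hence $c^*v=c^*\Phi(\lambda)c=-\overline{c^*v}$, so $c^*({\rm Re}\,\Phi(\lambda))c={\rm Re}\,(c^*v)=0$, and $c=0$ once ${\rm Re}\,\Phi(\lambda)>0$. (Equivalently, for $z\neq 0$, \eqref{2-2} gives $(R_z\Phi)(\lambda)=-\frac{2}{z}L_\Phi(\lambda,1/\overline{z})$, and $L_\Phi(\cdot,w)c\equiv 0$ gives $L_\Phi(w,w)c=0$.)

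The second gap is your treatment of the ``exceptional points''. Condition \eqref{rtg} is required at every $z\in\mathbb C\setminus\mathbb T$, so a generic argument does not suffice. Your fallback through $\Phi^{-1}$ is circular, since $\Phi^{-1}$ is undefined where $\Phi(z)$ is singular. Likewise, your opening claim that $\Phi^{-1}$ is a Carath\'eodory function is asserted without proof, and it is needed even to define $\mathcal L(\Phi^{-1})$ and $U_-$. The missing fact is the Poisson representation ${\rm Re}\,\Phi(\lambda)=\int_0^{2\pi}\frac{1-|\lambda|^2}{|e^{it}-\lambda|^2}\,d\mu(e^{it})$. For fixed $\lambda\in\mathbb D$ the Poisson kernel is bounded above and below by positive constants, so $\ker{\rm Re}\,\Phi(\lambda)=\ker\mu(\mathbb T)$ is independent of $\lambda$. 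Thus $\det{\rm Re}\,\Phi\not\equiv0$ already gives ${\rm Re}\,\Phi(\lambda)>0$ at every point of $\mathbb D$. By \eqref{2-2}, $\Phi(\lambda)$ and $L_\Phi(\lambda,\lambda)$ are then invertible at every point of $\mathbb C\setminus\mathbb T$. The paper's proof uses this silently when it treats non-invertibility of ${\rm Re}\,\Phi(\lambda_0)$ as a contradiction. With this fact there are no exceptional points, $\Phi^{-1}$ is Carath\'eodory, and with the corrected injectivity step your argument yields \eqref{rtg} and \eqref{ze-inter} at every $z$.
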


\begin{proof}
  Now $W_+$ is the identity in the arguments above, and  \eqref{delta-1} shows that the rank of $\Delta(z)$ is less or equal to $n$. If it was strictly less than $n$
  we would have that the span of the vectors $F(\lambda_0)$ for a fixed $\lambda_0\in\mathbb C\setminus\mathbb T$ and $F$ running through $\mathcal L(\phi)$
  would be strictly included inside $\mathbb C^n$.
  Then for some $\xi\in\mathbb C^n$ we have
  \[
\xi^*\frac{\Phi(\lambda_0)+(\Phi(a))^*}{1-\lambda_0\overline{a}}c=0,\quad \forall a\in \mathbb C\setminus\mathbb T\quad{\rm and}\quad \forall c\in\mathbb C^n.
\]
Setting $a=\lambda_0$ and $c=\xi$ we get that ${\rm Re}\, \Phi(\lambda_0)$ is not invertible.\smallskip

  That same formula shows that $F$ in the intersection \eqref{ze-inter} will satisfy $F(\lambda)=0$ for $\lambda\in\mathbb C\setminus\mathbb T$.
  \end{proof}

\section{The trace formula}
\setcounter{equation}{0}
\label{Tr1}
\begin{thm}
  Let $U_+$ and  $U_-$ be two unitary operators in a Hilbert space $\mathcal H$ satisfying \eqref{ze-inter} and \eqref{rtg}, and let $\Phi$ be the associated Carath\'eodory
  function. Then, it holds that
  \begin{eqnarray}
    \label{tre-delta}
  (\det B^{-1})(\det(U_+-{{\lambda}} I_{\mathcal H})^{-1}(U_--{{\lambda}} I_{\mathcal H}))&=&   \det\Phi({\lambda}),\\
    {\rm Tr}\,\left((U_+-{\lambda}I_{\mathcal H})^{-1}-(U_--{\lambda}I_{\mathcal H})^{-1}\right)&=&{\rm Tr}\, \Phi^{-1}({\lambda})\Phi^\prime({\lambda}).
                                                                                       \label{tre-delta-2}
    \end{eqnarray}
  \end{thm}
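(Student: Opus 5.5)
The plan is to use the realization \eqref{phi-model} together with the Sylvester identity $\det(I_n+XY)=\det(I_{\mathcal H}+YX)$, valid when $YX$ is a finite rank perturbation of the identity (Fredholm determinant). For the first formula I factor
\[
\Phi(\lambda)=B^{-1}\left(I_n+BC(I_{\mathcal H}-\lambda U_+^*)^{-1}C^*\right),
\]
so that
\[
\det\Phi(\lambda)=\det B^{-1}\,\det\left(I_{\mathcal H}+(I_{\mathcal H}-\lambda U_+^*)^{-1}C^*BC\right).
\]
By \eqref{law-1}, $C^*BC=U_+^*U_--I_{\mathcal H}$. Hence
\[
I_{\mathcal H}+(I-\lambda U_+^*)^{-1}(U_+^*U_--I)=(I-\lambda U_+^*)^{-1}(U_+^*U_--\lambda U_+^*).
\]
Writing $I-\lambda U_+^*=U_+^*(U_+-\lambda I)$, this becomes $(U_+-\lambda I)^{-1}(U_--\lambda I)$. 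That is the operator appearing in \eqref{tre-delta}, and it is of the form identity plus a finite rank operator, since $(U_+-\lambda I)^{-1}(U_--U_+)$ has rank $\le n$ by \eqref{rtg}. Its determinant is therefore well defined as a Fredholm determinant, and \eqref{tre-delta} follows.

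For \eqref{tre-delta-2} I differentiate \eqref{tre-delta} logarithmically in $\lambda$. On the right, Jacobi's formula gives $(\det\Phi)'/\det\Phi={\rm Tr}\,\Phi^{-1}\Phi'$; this holds wherever $\det\Phi\neq0$, and \eqref{inv-inv} shows $\Phi$ is invertible on $\mathbb C\setminus\mathbb T$.

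On the left, set $T(\lambda)=(U_+-\lambda I)^{-1}(U_--\lambda I)=I+K(\lambda)$ with $K$ finite rank and analytic. The Fredholm Jacobi formula gives $\frac{d}{d\lambda}\log\det T={\rm Tr}(T^{-1}T')$, where $T^{-1}T'$ is trace class because $T'=K'$ is finite rank (I will check that it is a sum of finite rank terms). We have
\[
T^{-1}=(U_--\lambda I)^{-1}(U_+-\lambda I),
\]
\[
T'=(U_+-\lambda I)^{-2}(U_--\lambda I)-(U_+-\lambda I)^{-1}.
\]
Therefore
\[
T^{-1}T'=(U_--\lambda I)^{-1}(U_+-\lambda I)^{-1}(U_--\lambda I)-(U_--\lambda I)^{-1}.
\]
By cyclicity of the trace, applied to differences that are trace class, its trace equals ${\rm Tr}\big((U_+-\lambda I)^{-1}-(U_--\lambda I)^{-1}\big)$. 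Since $\det B^{-1}$ is constant, this yields \eqref{tre-delta-2}.

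An alternative route for the second formula is to use Proposition~\ref{prop11} directly. The operator $\Delta(z)$ is unitarily equivalent to $F\mapsto (R_z\Phi)(\cdot)\Phi^{-1}(z)F(z)$, which has rank $\le n$. Choosing an orthonormal basis and using Zaremba's formula \eqref{zarema-form}, its trace is ${\rm Tr}\,\Phi^{-1}(z)\sum_k F_k(z)\langle R_z\Phi\,\cdot,F_k\rangle$. One then identifies ${\rm Tr}\,\Phi^{-1}(z)\Phi'(z)$ via $(R_z\Phi)(z)=\Phi'(z)$ and the reproducing property.

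The main obstacle is the justification of the trace and determinant manipulations in infinite dimensions. One needs the rearrangements to produce genuinely finite rank (hence trace class) differences, so that cyclicity is legitimate. Concretely, I would write $T^{-1}T'$ as $(U_--\lambda I)^{-1}A\,(U_--\lambda I)-(U_--\lambda I)^{-1}$ with $A=(U_+-\lambda I)^{-1}$. I would then subtract and add $(U_--\lambda I)^{-1}(U_--\lambda I)^{-1}(U_--\lambda I)$, so that each piece is a finite rank operator conjugated by bounded invertible operators, before applying cyclicity.
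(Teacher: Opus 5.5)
Your proposal is correct and is essentially the paper's first proof: the same Sylvester-identity computation combined with \eqref{law-1} yields \eqref{tre-delta}, and \eqref{tre-delta-2} then follows by logarithmic differentiation of both sides. The only differences are these: you derive the log-derivative of the perturbation determinant yourself, via Jacobi's formula and the identity $T^{-1}T'=(U_--\lambda I_{\mathcal H})^{-1}\Delta(\lambda)(U_--\lambda I_{\mathcal H})$ (which is what makes cyclicity of the trace legitimate), whereas the paper cites Krein's formula \eqref{log-det}; and your alternative route is the Zaremba-formula argument of Section \ref{tr2}.
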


\begin{proof}

From \eqref{phi-model} we have where we used \eqref{law-1} to go from the second to the third line
  \[
    \begin{split}
\det\Phi({\lambda})&=(\det B^{-1})\det(I_n+BC(I_{\mathcal H}-{{\lambda}} U_+^*)^{-1}C^*)\\
&=(\det B^{-1})\det(I_{\mathcal H}+(I_{\mathcal H}-{{\lambda}} U_+^*)^{-1}C^*BC)\\
&=(\det B^{-1})\det(I_{\mathcal H}-(I_{\mathcal H}-{{\lambda}} U_+^*)^{-1}(I_{\mathcal H}-U_+^*U_-))\\
&=(\det B^{-1})\det(I_{\mathcal H}-(U_+-{{\lambda}} I_{\mathcal H})^{-1}(U_+-U_-))\\
&=(\det B^{-1})(\det(U_+-{{\lambda}} I_{\mathcal H})^{-1}(U_--{{\lambda}} I_{\mathcal H})).
\end{split}
\]
To obtain \eqref{tre-delta-2} we differentiate both sides of  \eqref{tre-delta} using  two formulas pertaining to derivative of a determinant.  First
using the formula (see \cite[Chapter 4, Section 2 p. 132]{MR86m:00014}) we obtain
        \begin{equation}
          \label{log-det}
          \frac{d}{d\lambda} \ln\left(    \det (B-{\lambda} I_{\mathcal H})(A-{\lambda} I_{\mathcal H})^{-1}\right)
          ={\rm Tr}\, \left\{(A-{\lambda}I_{\mathcal H})^{-1}-(B-{\lambda}I_{\mathcal H})^{-1}\right\},
          \end{equation}
          where $A$ and $B$ are possibly unbounded operator such that $A-B$ is trace class we have
          \begin{equation}
            \label{eqw1}
\frac{d}{d\lambda}\ln\det(U_--{\lambda}I_{\mathcal H})(U_+-{\lambda} I_{\mathcal H})^{-1}=\Delta({\lambda}),
\end{equation}
(where $\Delta({\lambda})$ denotes the left hand side of \eqref{tre-delta-2}). Next  we obtain
\[
\frac{d}{d{\lambda}}\ln\det\Phi({\lambda})={\rm Tr}\, \Phi^{-1}({\lambda})\Phi^\prime({\lambda})
  \]
from \cite[p. 129]{MR86m:00014}.
\end{proof}

   Assume that $\det\Phi\not=0$. in both $\mathcal L(\Phi)$.  Then the model we give relates the ``spectra'' associated to the measures
        $d\mu_\pm$ associated to $\Phi$ and $\Phi^{-1}$ in Herglotz' representation. Furthermore, by the functional calculus we have:

      \begin{thm}
Let $R>1$. Then, we have
        \begin{equation}
          {\rm Tr}\,\,\left\{ f(U_-)-f(U_+)\right\}=\frac{1}{2\pi i}\int_{|z|=R}f(z){\rm Tr}\, \Phi^{-1}(z)\Phi^\prime(z)dz .
        \end{equation}
      \end{thm}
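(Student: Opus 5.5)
The plan is to use the Riesz--Dunford holomorphic functional calculus, with $f$ analytic in a neighborhood of the closed disk $\{|z|\le R\}$. Since $U_\pm$ are unitary, their spectra lie on $\mathbb T$, which is inside the circle $|z|=R$. Hence
\[
f(U_\pm)=-\frac{1}{2\pi i}\int_{|z|=R}f(z)(U_\pm-zI_{\mathcal H})^{-1}dz,
\]
with the sign fixed by the convention $(zI-U)^{-1}=-(U-zI)^{-1}$. Subtracting the two representations gives
\[
f(U_-)-f(U_+)=\frac{1}{2\pi i}\int_{|z|=R}f(z)\,\Delta(z)\,dz,
\]
where $\Delta(z)=(U_+-zI_{\mathcal H})^{-1}-(U_--zI_{\mathcal H})^{-1}$ is the operator introduced before Proposition \ref{prop11}.

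The next step is to move the trace inside the integral. By \eqref{rtg}, $\Delta(z)$ has rank $n$, so it is trace class for every $z\in\mathbb C\setminus\mathbb T$. Moreover $z\mapsto\Delta(z)$ is continuous, indeed analytic, in trace norm on $|z|=R$. One way to see this is from the factorization
\[
\Delta(z)=(U_+-zI)^{-1}(U_--U_+)(U_--zI)^{-1},
\]
where $U_--U_+=U_+(U_+^*U_--I)=U_+C^*BC$ by \eqref{law-1}. This is a fixed rank-$\le n$ operator sandwiched between resolvents that are norm-continuous on the circle. So the Bochner/Riemann integral converges in trace norm, and $f(U_-)-f(U_+)$ is trace class. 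Since the trace is a bounded linear functional on the trace class, it commutes with the integral:
\[
{\rm Tr}\,\{f(U_-)-f(U_+)\}=\frac{1}{2\pi i}\int_{|z|=R}f(z)\,{\rm Tr}\,\Delta(z)\,dz.
\]

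Finally, substitute \eqref{tre-delta-2}, i.e. ${\rm Tr}\,\Delta(z)={\rm Tr}\,\Phi^{-1}(z)\Phi'(z)$, which holds for $|z|=R>1$ with $\Phi$ extended to $\mathbb E$ by \eqref{2-2}. This yields the claimed formula.

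The main obstacle is the trace-norm justification in the second step, together with fixing the sign conventions of the Cauchy integral for a contour enclosing the spectrum. The factorization through the finite-rank operator $C^*BC$ handles the trace-norm continuity cleanly.
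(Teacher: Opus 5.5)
Your proposal is correct and follows the paper's route. The paper obtains this statement directly ``by the functional calculus'', from the Cauchy integral representation of $f(U_\pm)$ combined with \eqref{tre-delta-2}, exactly as you do. Your factorization $\Delta(z)=(U_+-zI_{\mathcal H})^{-1}U_+C^*BC(U_--zI_{\mathcal H})^{-1}$ also supplies the trace-norm justification for interchanging the trace and the contour integral, which the paper leaves implicit here (it handles this only in the rational scalar case of Theorem \ref{ththth}, via the Neumann series and a uniform bound on ${\rm Tr}\,(U_-^\ell-U_+^\ell)$).
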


      Note that $\Phi$ is not in general meromorphic across the unit circle and one cannot compute the integral above using residue theory.
      When $\Phi$ is rational and does satisfy \eqref{2-2} see Theorem \ref{ththth} for a special case of the above result.

\section{Another proof of the trace formula}
\setcounter{equation}{0}
\label{tr2}
We now present another proof of the trace formula. It is admittedly longer, but using Zaremba's formula provides, in our opinion, supplementary insight to the result.
We divide the argument into a number of steps.\\

STEP 1: {\sl For $z\not=0$ it holds that}
\begin{equation}
  (R_z\Phi)(\lambda)=-\frac{1}{z}K_\Phi\left(\lambda,\frac{1}{\overline{z}}\right),
  \label{rzphi}
\end{equation}
and in particular
\begin{equation}
K_\Phi\left(z,\frac{1}{\overline{z}}\right)=-z\Phi^{\prime}(z).
  \label{der}
  \end{equation}
Using \eqref{2-2} we can write
\[
  (R_z\Phi)(\lambda)=\frac{\Phi(\lambda)-\Phi(z)}{\lambda-z}
  =\frac{\Phi(\lambda)+\left(\Phi\left(\frac{1}{\overline{z}}\right)\right)^*}{-{z}\left(1-\lambda{\overline{\frac{1}{\overline{z}}}}\right)}
  \]
which shows \eqref{rzphi}, from which \eqref{der} follows directly.\\

STEP 2: {\sl Let $f\in\mathcal H$ and $F=W_+f$. Then,}
\begin{equation}
  \label{345-543}
\langle \Delta(z)f,f\rangle_{\mathcal H}=-\frac{2}{z}{\rm Tr}\,\left(\Phi(z)^{-1}F(z)\left(F\left(\frac{1}{\overline{z}}\right)\right)^*\right).
    \end{equation}

    Using Proposition \ref{prop11} and \eqref{rzphi} we have
    \[
      \begin{split}
        \langle \Delta(z)f,f\rangle_{\mathcal H}&=\langle(R_z\Phi)\Phi^{-1}(z)F(z),F\rangle_{\mathcal L(\Phi)}\\
        &=\langle -\frac{1}{z}K_\Phi\left(\cdot,\frac{1}{\overline{z}}\right)\Phi^{-1}(z)F(z),F\rangle_{\mathcal L(\Phi)}\\
        &=-\frac{1}{z}\overline{\langle F,K_\Phi\left(\cdot,\frac{1}{\overline{z}}\right)\Phi^{-1}(z)F(z)\rangle_{\mathcal L(\Phi)}}\\
        &=-\frac{2}{z}\overline{(F(z))^*(\Phi^{-1}(z))^*F\left(\frac{1}{\overline{z}}\right)}\\
        &=-\frac{2}{z}\left(F\left(\frac{1}{\overline{z}}\right)\right)^*\Phi^{-1}(z)F(z)\\
        &=-\frac{2}{z}{\rm Tr}\,\left(\Phi^{-1}(z)F(z)\left(F\left(\frac{1}{\overline{z}}\right)\right)^*\right),
      \end{split}
    \]
    which is \eqref{345-543}.\\

    STEP 3: {\sl One proves the trace formula using Zaremba's formula for the reproducing kernel.}\\

    Let $e_1,e_2,\ldots$ be an orthonormal basis of $\mathcal H$ and let $F_1=W_+e_1,F_2=W_+e_2,\ldots$ be the corresponding orthonormal basis in
    $\mathcal L(\Phi)$.     By the vector-valued version of Zaremba's formula we have
    \begin{equation}
      \label{zzrem}
L_\Phi(z,w)=\sum_{\ell=1}^\infty F_\ell(z)F_\ell(w)^*,
\end{equation}
and so for $w=\frac{1}{\overline{z}}$, using \eqref{der}, we have
\begin{equation}
  \label{qazxsw}
  L_\Phi(z,\frac{1}{\overline{z}})=\sum_{\ell=1}^\infty F_\ell(z)\left(F_\ell
    \left(\frac{1}{\overline{z}}\right)\right)^*=-\frac{z}{2}\Phi^{\prime}(z).
\end{equation}
By definition of the trace, and using \eqref{345-543},
    \[
      \begin{split}
        {\rm Tr}\, (\Delta(z))&=\sum_{\ell=1}^\infty \langle\Delta(z)e_\ell,e_\ell
                                \rangle_{\mathcal H}\\
            &=-\frac{1}{z}\sum_{\ell=1}^\infty {\rm Tr}\,\left(\Phi^{-1}(z)F_\ell(z)\left(F_\ell\left(\frac{1}{\overline{z}}\right)\right)^*\right)\\
      &=-\frac{1}{z}{\rm Tr}\,\left(\Phi^{-1}(z)\underbrace{\sum_{\ell=1}^\infty
          F_\ell(z)\left(F_\ell\left(\frac{1}{\overline{z}}\right)\right)^*}_{-z\Phi^\prime(z)\,\, (\mbox{{\rm see \eqref{qazxsw}}})}\right)\\
      &={\rm Tr}\, \Phi^{-1}(z)\Phi^\prime(z).
\end{split}
    \]

      \section{The rational case}
      \setcounter{equation}{0}
      \label{ra-case}
      We now consider the case where $\Phi$ is rational, and first recall that a $\mathbb C^{p\times q}$-valued rational function
      analytic at infinity can be written in the form
      \[
        \Phi({\lambda})=D+C({\lambda}I_N-A)^{-1}B,
      \]
      where $D=\Phi(\infty)$ and $A,B,C$ are matrices of appropriate sizes. The realization is said to be minimal when the size $N$ of the matrix $A$ is minimal. When the function
      is analytic at the origin the realization takes now the form
      \[
\Phi({\lambda})=D+{\lambda}C(I_N-{\lambda}A)^{-1}B.
\]
In that latter case, applying first the realization result to ${\lambda}\Phi({\lambda})=D+{\lambda}C(I_N-{\lambda}A)^{-1}B$ one has $D=0$ and gets the realization
\[
  \Phi({\lambda})=C(I_N-{\lambda}A)^{-1}B,
\]
for $\Phi({\lambda})$. This latter realization is not minimal, even if one starts from a minimal realization of ${\lambda}\Phi({\lambda})$. See e.g. \cite{MR2363355,MR2663312} for more on realization
of matrix-valued rational functions.\\

We begin with a general result, where the function $\Phi$ is not assumed to have a positive real part in $\mathbb D$.

            \begin{thm}
              Let $\Phi$ be a $\mathbb C^{n\times n}$-valued rational function, analytic at infinity, with $\Phi(\infty)$ invertible (but $\Phi$ need not satisfy \eqref{2-2}).
              Let $\Phi({\lambda})=D+C({\lambda}I_N-A)^{-1}B$ be a realization of $\Phi$ centered at $\infty$. Then,
\begin{eqnarray}
  \label{eqas-1}
  \det\Phi({\lambda})&=&(\det D^{-1})\det\left(({\lambda}I_N-A^\times)({\lambda}I_N-A)^{-1}\right)\\
  {\rm Tr}\,\Phi^\prime({\lambda})\Phi({\lambda})^{-1}&=&{\rm Tr}\left\{({\lambda}I_N-A^\times)^{-1}-({\lambda}I_N-A)^{-1}\right\},
                                          \label{eqas-2}
\end{eqnarray}
where $A^\times=A-BD^{-1}C$.
            \end{thm}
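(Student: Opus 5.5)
The plan is to mimic the first proof of the trace formula in Section \ref{Tr1}, now in finite dimensions, where all determinants and traces are classical. First factor out $D$:
\[
\Phi(\lambda)=D\bigl(I_n+D^{-1}C(\lambda I_N-A)^{-1}B\bigr).
\]
Then apply Sylvester's identity $\det(I_n+XY)=\det(I_N+YX)$ with $X=D^{-1}C$ and $Y=(\lambda I_N-A)^{-1}B$. This gives
\[
\det\Phi(\lambda)=(\det D)\det\bigl(I_N+(\lambda I_N-A)^{-1}BD^{-1}C\bigr).
\]
Next write $I_N+(\lambda I_N-A)^{-1}BD^{-1}C=(\lambda I_N-A)^{-1}(\lambda I_N-A+BD^{-1}C)=(\lambda I_N-A)^{-1}(\lambda I_N-A^\times)$. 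Since the determinant of a product of square matrices is multiplicative and the matrices commute under $\det$, we get
\[
\det\Phi(\lambda)=(\det D)\,\det(\lambda I_N-A^\times)\,\det(\lambda I_N-A)^{-1}.
\]
This is \eqref{eqas-1}, up to the normalizing constant in front. The computation gives $\det D$, which is what the correct identity must give: compare both sides as $\lambda\to\infty$, where the right-hand ratio tends to $1$ and $\det\Phi(\infty)=\det D$. I would therefore read the factor $\det D^{-1}$ in \eqref{eqas-1} as a typo (or a convention paralleling $\det B^{-1}$ in \eqref{tre-delta}, where $B^{-1}=\Phi$'s constant term). The constant is irrelevant for \eqref{eqas-2} anyway.

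For \eqref{eqas-2}, take logarithmic derivatives of the identity just obtained, valid for $\lambda$ away from the spectra of $A$ and $A^\times$, where everything is analytic and nonvanishing. On the left, Jacobi's formula gives $\frac{d}{d\lambda}\ln\det\Phi(\lambda)={\rm Tr}\,\Phi(\lambda)^{-1}\Phi'(\lambda)={\rm Tr}\,\Phi'(\lambda)\Phi(\lambda)^{-1}$ by cyclicity of the trace. On the right, $\frac{d}{d\lambda}\ln\det(\lambda I_N-M)={\rm Tr}(\lambda I_N-M)^{-1}$ for any square matrix $M$. Applying this with $M=A^\times$ and $M=A$ and subtracting yields ${\rm Tr}\{(\lambda I_N-A^\times)^{-1}-(\lambda I_N-A)^{-1}\}$, which is \eqref{eqas-2}. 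This holds on the complement of a finite set and, by analytic continuation, as an identity of rational functions.

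The argument is routine; no positivity or the symmetry \eqref{2-2} is used, consistent with the hypothesis. The only point needing care is the bookkeeping of the constant in \eqref{eqas-1}, which I would settle by the $\lambda\to\infty$ comparison above. As a cross-check on \eqref{eqas-2}, one can also compute directly: $\Phi^{-1}(\lambda)=D^{-1}-D^{-1}C(\lambda I_N-A^\times)^{-1}BD^{-1}$ and $\Phi'(\lambda)=-C(\lambda I_N-A)^{-2}B$. Multiplying these and using the resolvent identity
\[
(\lambda I_N-A^\times)^{-1}-(\lambda I_N-A)^{-1}=-(\lambda I_N-A^\times)^{-1}BD^{-1}C(\lambda I_N-A)^{-1}
\]
should reproduce the same trace. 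This mirrors the role of \eqref{inv-inv} in Section \ref{sec-3}.
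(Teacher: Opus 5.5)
Your proposal is correct. For \eqref{eqas-1} you follow the paper's argument: factor out $D$, apply Sylvester's identity, then use $BD^{-1}C=(\lambda I_N-A^\times)-(\lambda I_N-A)$. You are also right about the constant. The paper's own first line of proof already produces $\det D$, and your $\lambda\to\infty$ comparison confirms that the printed $\det D^{-1}$ is a misprint.

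For \eqref{eqas-2} your main argument is genuinely different from the paper's. You take the logarithmic derivative of \eqref{eqas-1} via Jacobi's formula, which mirrors how \eqref{tre-delta-2} is derived from \eqref{tre-delta} in Section~\ref{Tr1}. The paper does not differentiate a determinant here. Instead it multiplies $\Phi'(\lambda)=-C(\lambda I_N-A)^{-2}B$ by the inverse realization $\Phi^{-1}(\lambda)=D^{-1}-D^{-1}C(\lambda I_N-A^\times)^{-1}BD^{-1}$. It then uses \eqref{tr-formula} to collapse the product to
\[
\Phi'(\lambda)\Phi^{-1}(\lambda)=-C(\lambda I_N-A)^{-1}(\lambda I_N-A^\times)^{-1}BD^{-1},
\]
and only afterwards takes the trace, using cyclicity and \eqref{tr-formula} once more. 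This is exactly what you list as a cross-check.

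Your route is shorter and makes \eqref{eqas-2} a formal corollary of \eqref{eqas-1}, visibly insensitive to the constant in front. Its only requirements, both of which you address, are that $\det\Phi\not\equiv 0$ (guaranteed by $\det D\neq 0$) and that one works off the finite set $\sigma(A)\cup\sigma(A^\times)$.

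The paper's route is purely algebraic and independent of \eqref{eqas-1}. It also yields a matrix identity before any trace is taken, i.e.\ an explicit realization of the rational function $\Phi'\Phi^{-1}$ itself rather than only of its trace. That is the kind of object the paper's next theorem, leading to \eqref{lala}, starts from.
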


            \begin{proof}
To prove \eqref{eqas-1} we write
          \[
             \begin{split}
              \det\Phi({\lambda})&=(\det D)\det(I_p+D^{-1}{C}({\lambda}I_N-{A})^{-1}{B})\\
              &=(\det D)\det(I_N+({\lambda}I_N-{A})^{-1}{B}D^{-1} C)
              \end{split}
            \]
            from which  \eqref{eqas-1} follows using
\begin{equation}
                \label{tr-formula}
                BD^{-1}C=({\lambda}I_N-A^\times)-({\lambda}I_N-A).
                \end{equation}

         We now turn to \eqref{eqas-2}.     We have $\Phi^\prime({\lambda})=-C({\lambda}I_N-A)^{-2}B$ and so
              \[
                \begin{split}
                  \Phi^\prime({\lambda})\Phi^{-1}({\lambda})&=-C({\lambda}I_N-A)^{-2}B(D^{-1}-D^{-1}C({\lambda}I_N-A^\times)^{-1}BD^{-1})\\
                  &=-C({\lambda}I_N-A)^{-2}BD^{-1}+\\
                                                            &\hspace{5mm}+C({\lambda}I_N-A)^{-2}BD^{-1}C({\lambda}I_N-A^\times)^{-1}
                                                              BD^{-1},\\
                \end{split}
              \]
             so that

              \[
 \Phi^\prime({\lambda})\Phi^{-1}({\lambda})=-C({\lambda}I_N-A)^{-1}({\lambda}I_N-A^\times)^{-1}BD^{-1}.
\]
Taking trace on both sides,
\[
  {\rm Tr}\,                  \Phi^\prime({\lambda})\Phi^{-1}({\lambda})=
  {\rm Tr}\, \left\{({\lambda}I_N-A^\times)^{-1}(-BD^{-1}C)({\lambda}I_N-A)^{-1}\right\}
  \]
  and using once more \eqref{tr-formula} we obtain \eqref{eqas-2}.
  \end{proof}

In the present setting two cases need to be distinguished
      \begin{itemize}
      \item $\Phi$ does not satisfy \eqref{2-2}. Then the function $\Phi$ extended to $\mathbb C\setminus\mathbb T$ via \eqref{2-2}
        will not be meromorphic in the complex plane in general. See Example \ref{exexex} for an illustration. The KYP (Kalman-Yakubovich-Popov) lemma
        gives precise characterization of minimal realizations of Carath\'eodory functions. We will not recall the precise statement in the present work but send the
        reader to \cite{MR525380,kyp4,kyp3,kyp2,Rantez,kyp1}.\\

      \item $\Phi$ satisfies \eqref{2-2}. It is then
        meromorphic in the complex plane.
        For instance $\Phi(\lambda)=\frac{1+{\lambda}}{1-{\lambda}}$.
        Then the description of the minimal realizations is much easier and presented below (see Theorem \ref{real-0987} below).
\end{itemize}

        In the first case we have:

                \begin{thm}
          Let
          \[
\Phi^\prime(\lambda)\Phi^{-1}(\lambda)=C(I_N-\lambda A)^{-1}B
\]
be a realization of the rational function $\Phi^\prime(\lambda)\Phi^{-1}(\lambda)$. Then,
\begin{equation}
\label{lala}
    {\rm Tr}\,( U_+^{*(\ell+1)}-U_-^{*(\ell+1)})={\rm Tr}\, CA^\ell B,\quad \ell=0,1,\ldots .
\end{equation}
          \end{thm}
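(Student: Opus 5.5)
The plan is to expand both sides of the trace formula \eqref{tre-delta-2} in power series around $\lambda=0$ and compare coefficients. Since $\Phi$ is analytic in $\mathbb D$, and the rational function $\Phi^\prime\Phi^{-1}$ is analytic at the origin (it has the realization $C(I_N-\lambda A)^{-1}B$), both sides are analytic in a neighbourhood of $0$. On the right we get
\[
{\rm Tr}\,\Phi^\prime(\lambda)\Phi^{-1}(\lambda)={\rm Tr}\,\Phi^{-1}(\lambda)\Phi^\prime(\lambda)=\sum_{\ell=0}^\infty \lambda^\ell\,{\rm Tr}\, CA^\ell B,
\]
which is valid for $|\lambda|<1/\rho(A)$. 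The first equality uses cyclicity of the trace for $n\times n$ matrices.

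On the operator side we expand the resolvents for $|\lambda|<1$, using unitarity of $U_\pm$ and a Neumann series:
\[
(U_\pm-\lambda I_{\mathcal H})^{-1}=U_\pm^*(I_{\mathcal H}-\lambda U_\pm^*)^{-1}=\sum_{\ell=0}^\infty \lambda^\ell U_\pm^{*(\ell+1)}.
\]
This converges in operator norm. Hence
\[
\Delta(\lambda)=\sum_{\ell=0}^\infty \lambda^\ell\bigl(U_+^{*(\ell+1)}-U_-^{*(\ell+1)}\bigr).
\]
Each coefficient is of finite rank, because
\[
U_+^{*k}-U_-^{*k}=\sum_{j=0}^{k-1}U_+^{*j}(U_+^*-U_-^*)U_-^{*(k-1-j)}
\]
and $U_+^*-U_-^*$ has rank at most $n$ by \eqref{rtg} at $\lambda=0$. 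So each ${\rm Tr}\,(U_+^{*(\ell+1)}-U_-^{*(\ell+1)})$ is well defined.

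The main obstacle is justifying that the trace can be exchanged with the series. Norm convergence alone is not enough; we need convergence in trace norm. I would use the bound
\[
\|U_+^{*k}-U_-^{*k}\|_1\le k\,\|U_+^*-U_-^*\|_1,
\]
which follows from the telescoping sum above and the unitarity of $U_\pm$. Consequently $\sum_\ell |\lambda|^\ell (\ell+1)\|U_+^*-U_-^*\|_1$ converges for $|\lambda|<1$, so the series for $\Delta(\lambda)$ converges in the trace class. Since the trace is continuous on the trace class, this gives
\[
{\rm Tr}\,\Delta(\lambda)=\sum_\ell \lambda^\ell\,{\rm Tr}\,(U_+^{*(\ell+1)}-U_-^{*(\ell+1)}).
\]

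Finally, the two power series agree near $0$ by \eqref{tre-delta-2}. By uniqueness of Taylor coefficients of an analytic function at the origin, their coefficients coincide, which yields \eqref{lala} for every $\ell\ge0$.
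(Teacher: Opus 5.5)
Your proposal is correct and follows the paper's own argument: expand both sides of the trace formula \eqref{tre-delta-2} at the origin, using the Neumann series $(U_\pm-\lambda I_{\mathcal H})^{-1}=\sum_{\ell\ge 0}\lambda^\ell U_\pm^{*(\ell+1)}$ and the realization of $\Phi^\prime\Phi^{-1}$, and compare Taylor coefficients. Your trace-norm bound $\|U_+^{*k}-U_-^{*k}\|_1\le k\,\|U_+^*-U_-^*\|_1$ supplies the justification for interchanging the trace with the series, a step the paper leaves implicit.
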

          \begin{proof}
            This follows by comparing coefficients of the power expansion at the origin since
            \[
              (U_+-\lambda I_{\mathcal H})^{-1}-(U_--\lambda I_{\mathcal H})^{-1}=\sum_{\ell=0}^\infty
              \lambda^{\ell}(U_+^{*(\ell+1)}-U_-^{*(\ell+1)}), \quad |\lambda|<1.
\]
            \end{proof}

In the latter case one has following result for rational matrix-valued functions which satisfy \eqref{2-2}. The proof for the case of functions taking self-adjoint values on
the unit circle can be found in \cite[\S 5]{ag}. We note that
\eqref{45} means that the matrix $\mathscr A$ is unitary in the metric defined by $H^{-1}$, or equivalently, $\mathscr A$ is similar to a unitary matrix.

      \begin{thm}
        Let $\Phi$ a $\mathbb C^{n\times n}$-valued rational function analytic in neighborhoods of the origin and infinity, and let
        \[
          \Phi({\lambda})={\mathscr D}+{\mathscr C}({\lambda}I_N-{\mathscr A})^{-1}{\mathscr B}
        \]
        be a minimal realization of $\Phi$. Then $\Phi$
        satisfies \eqref{2-2} if and only if there exists an invertible Hermitian $H\in\mathbb C^{N\times N}$ such that
        \begin{eqnarray}
          \label{45}
          {\mathscr A}H^{-1}{\mathscr A}^*&=&H^{-1}\\
          {\mathscr B}&=&-H^{-1}{\mathscr A}^{-*}{\mathscr C^*}\\
          {\mathscr D}+{\mathscr D}^*&=&-{\mathscr C}H^{-1}{\mathscr C^*}.
                                         \label{47}
        \end{eqnarray}
        Then,
        \begin{equation}
          \label{59}
          \Phi({\lambda})=X+\frac{1}{2}{\mathscr C}(\mathscr A-{\lambda}I_N)^{-1}(\mathscr A+{\lambda}I_N)H^{-1}{\mathscr C^*},\quad X+X^*=0,
        \end{equation}
        and
        \begin{equation}
          \frac{\Phi({\lambda})+(\Phi(w))^*}{2(1-{\lambda}\overline{w})}={\mathscr C}({\lambda}I_N-{\mathscr A})^{-1}H^{-1}(wI_N-{\mathscr A})^{-*}{\mathscr C^*}.
          \label{58}
        \end{equation}
        \label{real-0987}
        \end{thm}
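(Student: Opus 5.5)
The plan is to translate the symmetry \eqref{2-2} into an identity between two realizations of the same rational function, and then use the uniqueness, up to similarity, of minimal realizations (the state space isomorphism theorem).

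\textbf{Preliminaries.} Since $\Phi$ is analytic at the origin and the realization is minimal, the poles of $\Phi$ are exactly the eigenvalues of $\mathscr A$. Hence $\mathscr A$ is invertible.

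\textbf{Realization of the reflected function.} Using
\[
I_N-\lambda\mathscr A^*=-\mathscr A^*(\lambda I_N-\mathscr A^{-*}),
\qquad
\lambda(\lambda I_N-\mathscr A^{-*})^{-1}=I_N+\mathscr A^{-*}(\lambda I_N-\mathscr A^{-*})^{-1},
\]
I will compute
\[
-\Bigl(\Phi\bigl(\tfrac1{\overline\lambda}\bigr)\Bigr)^*=-\mathscr D^*+\mathscr B^*\mathscr A^{-*}\mathscr C^*+\mathscr B^*\mathscr A^{-*}(\lambda I_N-\mathscr A^{-*})^{-1}\mathscr A^{-*}\mathscr C^*.
\]
This is again a realization of size $N$. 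It is minimal, because observability and controllability are preserved by the invertible transformations involved.

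\textbf{Necessity.} If \eqref{2-2} holds, the two minimal realizations represent the same function. The state space isomorphism theorem then gives a unique invertible $S$ with
\[
\mathscr A=S\mathscr A^{-*}S^{-1},\qquad \mathscr B=S\mathscr A^{-*}\mathscr C^*,\qquad \mathscr C=\mathscr B^*\mathscr A^{-*}S^{-1},
\]
together with $\mathscr D=-\mathscr D^*+\mathscr B^*\mathscr A^{-*}\mathscr C^*$.

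The step I expect to be the main obstacle is proving that $S$ is Hermitian. My plan is to take adjoints of the three similarity relations and rearrange them. This should show that $S^*$ also intertwines the same pair of minimal realizations, because the reflection $\lambda\mapsto 1/\overline\lambda$ combined with the adjoint is an involution on realizations. Uniqueness of the similarity then forces $S=S^*$.

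Set $H^{-1}=-S$. The first relation becomes $\mathscr AH^{-1}\mathscr A^*=H^{-1}$, which is \eqref{45}. The second becomes $\mathscr B=-H^{-1}\mathscr A^{-*}\mathscr C^*$. For the third, $\mathscr B^*=-\mathscr C\mathscr A^{-1}H^{-1}$, so
\[
\mathscr D+\mathscr D^*=-\mathscr C\mathscr A^{-1}H^{-1}\mathscr A^{-*}\mathscr C^*=-\mathscr CH^{-1}\mathscr C^*,
\]
using \eqref{45} in the form $\mathscr A^{-1}H^{-1}\mathscr A^{-*}=H^{-1}$. This gives \eqref{47}.

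\textbf{Sufficiency.} Conversely, given such an $H$, substituting \eqref{45}--\eqref{47} into the reflected realization reproduces $\Phi$ term by term, which proves \eqref{2-2}.

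\textbf{Formula \eqref{59}.} From \eqref{45}, $H^{-1}\mathscr A^{-*}=\mathscr AH^{-1}$, so $\mathscr B=-\mathscr AH^{-1}\mathscr C^*$. I will write $\mathscr D=X-\frac12\mathscr CH^{-1}\mathscr C^*$; then \eqref{47} is exactly $X+X^*=0$. The identity
\[
(\mathscr A-\lambda I_N)^{-1}(\mathscr A+\lambda I_N)=-I_N+2\mathscr A(\mathscr A-\lambda I_N)^{-1}
\]
then gives \eqref{59} by direct comparison.

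\textbf{Formula \eqref{58}.} I will use \eqref{59} to compute $\Phi(\lambda)+\Phi(w)^*$. In this sum $X$ cancels, and the remaining terms factor through $(\lambda I_N-\mathscr A)^{-1}$ on the left and $(wI_N-\mathscr A)^{-*}$ on the right, as in Step 3 of the model theorem. Using \eqref{45} to replace $\mathscr A^*$ by $H\mathscr A^{-1}H^{-1}$, the middle factor should reduce to $2(1-\lambda\overline w)H^{-1}$, which is \eqref{58}.
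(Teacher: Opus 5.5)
Your route is the paper's: realize $-(\Phi(1/\overline{\lambda}))^*$ with state matrix $\mathscr A^{-*}$, invoke uniqueness of the similarity between minimal realizations, check that $S^*$ intertwines the same pair so that $S=S^*$, and set $H^{-1}=-S$. Your similarity relations, your derivation of \eqref{47}, the sufficiency direction and your derivation of \eqref{59} are all correct. Your relations are in fact correctly signed: the $(1,2)$ entry of the paper's block display should read $+\mathscr A^{-*}\mathscr C^*$, since as printed it yields $\mathscr B=H^{-1}\mathscr A^{-*}\mathscr C^*$.

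The problem is your last sentence, ``which is \eqref{58}''. If you carry out your computation, the bracket does reduce to $2(1-\lambda\overline{w})H^{-1}$, but it is multiplied by the factor $\frac12$ from \eqref{59}. So
\[
\Phi(\lambda)+(\Phi(w))^*=(1-\lambda\overline{w})\,\mathscr C(\lambda I_N-\mathscr A)^{-1}H^{-1}(wI_N-\mathscr A)^{-*}\mathscr C^*,
\]
and dividing by $2(1-\lambda\overline{w})$ gives one half of the right-hand side of \eqref{58}, not \eqref{58} itself. This is a defect of the printed formula, not of your method. It cannot be absorbed by rescaling $H$, because $H$ is uniquely fixed by the minimal realization.

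For example, $\Phi(\lambda)=\frac{1+\lambda}{1-\lambda}$ has the minimal realization $\mathscr A=1$, $\mathscr B=-\sqrt2$, $\mathscr C=\sqrt2$, $\mathscr D=-1$, which forces $H=1$. The left side of \eqref{58} is then $\frac{1}{(1-\lambda)(1-\overline{w})}$, while the right side is $\frac{2}{(1-\lambda)(1-\overline{w})}$. The correct identity carries the factor $\frac12$ on the right, exactly as in \eqref{eq-0001}; the paper's ``readily verified'' passes over the same point. You should state and prove that corrected version rather than claim \eqref{58} as printed.
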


        \begin{proof}
          We first remark that $\mathscr A$ is invertible since $\Phi$ is assumed without a pole at the origin. Thus a minimal realization of $-(\Phi(1/\overline{{\lambda}}))^*$ is given by
          \begin{equation}
            -(\Phi(1/\overline{{\lambda}}))^*=-({\mathscr D}^*-{\mathscr B}^*{\mathscr A}^{-*}{\mathscr C}^*)+{\mathscr B}^*
              {\mathscr A}^{-*}({\lambda}I_N-{\mathscr A}^{-*})^{-1}{\mathscr A}^{-*}{\mathscr C^*}.
\end{equation}
Writing that two minimal realizations of a given rational functions are similar we see that there exists an invertible uniquely defined similarity matrix, which we
denote by $-H^{-1}$, and such that
\[
  \begin{pmatrix}{\mathscr A}&{\mathscr B}\\{\mathscr C}&{\mathscr D}\end{pmatrix}\begin{pmatrix}-H^{-1}&0\\0&I_n\end{pmatrix}=
  \begin{pmatrix}-H^{-1}&0\\0&I_n\end{pmatrix}\begin{pmatrix}{\mathscr A}^{-*}&-{\mathscr A}^{-*}\mathscr C^*\\
    {\mathscr B}^*{\mathscr A}^{-*}&-({\mathscr D}^*-{\mathscr B}^*{\mathscr A}^{-*}{\mathscr C^*})\end{pmatrix}.
\]
Equating the block entries in the above leads to \eqref{45}-\eqref{47}. One checks that $H^*$ also satisfies these equations and so $H=H^*$ by uniqueness of the similarity
matrix relating two minimal realizations. Furthermore, \eqref{59} and \eqref{58} are readily verified and $H>0$ follows from \eqref{58}.
          \end{proof}

 Note that \eqref{47} implies that $\mathscr D$ has a non-negative real part, but $\mathscr D$ need not be invertible. We also note that \eqref{45}-\eqref{47} can be rewritten
 as
 \[
   \begin{pmatrix}\mathscr{A}H^{-1}\mathscr{A}^*&\mathscr{A}H^{-1}\mathscr{C}^*\\
     \mathscr{ C}H^{-1}\mathscr{A}^*&\mathscr{C}H^{-1}\mathscr{C}^*\end{pmatrix}=\begin{pmatrix}H^{-1}&-\mathscr{B}\\ -\mathscr{B}^*&-\mathscr{D}-
     \mathscr{D}^*\end{pmatrix}.
 \]
 \begin{rem}
In the present setting we have $H>0$ since the kernel is positive. The argument uses the fact that the pair $(\mathscr C,\mathscr A)$ is observable.
\end{rem}

 Furthermore, since $\mathscr A$ is similar to the unitary matrix $U=H^{1/2}\mathscr {A}H^{-1/2}$ we can rewrite \eqref{59} as
 \begin{equation}
\Phi(\lambda)=-\mathscr C_1\left(\frac{1}{2}I_n+(\lambda U^*-I_N)^{-1}\right)\mathscr C^*_1+iX_1,
\end{equation}
where $X_1=X_1^*\in\mathbb C^{n\times n}$ and $\mathscr C_1=\mathscr C H^{1/2}$. We leave the verification of these computations to the reader.\\

Another remark is that the equivalence of this result with \eqref{link} is seen by rewritting the latter (in the notation of \eqref{link}) as
\[
  \begin{split}
    \Phi(\lambda)&=iE+\sum_{\ell=1}^mM_\ell \frac{e^{it_\ell}-\lambda+2\lambda}{e^{it_\ell}-\lambda}\\
    &=iE+\sum_{\ell=1}^m M_\ell
      +2\lambda\sum_{\ell=1}^m\frac{M_\ell e^{-it_\ell}}{1-\lambda e^{-it_\ell}},
    \end{split}
  \]
  corresponding to
  \[
    \begin{split}
      \mathscr A&={\rm diag}\,(e^{-it_1},\ldots , e^{-it_m})\\
      \mathscr B&=\sqrt{2}\begin{pmatrix}e^{-it_1}\sqrt{M_1}\\ \vdots \\ e^{-it_m}\sqrt{M_m}\end{pmatrix}\\
      \mathscr C&=\sqrt{2}\begin{pmatrix}\sqrt{M_1}& \cdots &\sqrt{M_m}\end{pmatrix}\\
      \mathscr D&=iE+\sum_{\ell=1}^m M_\ell,
                  \end{split}\]
      which satisfy \eqref{45}-\eqref{47} with $H=I_m$.\\

      Finally, to close this discussion on Theorem \ref{real-0987} we note that $\Phi(\lambda^N)$ fits the hypothesis of the theorem for any $N\in\mathbb N$ if $\Phi$ does. For
      $\Phi(\lambda)=\frac{1+\lambda}{1-\lambda}$ we have
      \[
\Phi(\lambda^N)=\frac{1+\lambda^N}{1-\lambda^N},
\]
corresponding to
  \[
    \begin{split}
      \mathscr A&=\begin{pmatrix} 0&I_{N-1}\\1&0\end{pmatrix},\\
      \mathscr B&=\sqrt{2}e_N,\\
      \mathscr C&=-\sqrt{2}e_1^*,\\
      \mathscr D&=-1,
    \end{split}\]
  where $e_1$ and $e_N$ are the standard unit vectors in the canonical basis of $\mathbb C^N$, which satisfy \eqref{45}-\eqref{47} with $H=I_m$.\\

We will need the following result in the scalar case. For our present purpose we do not need the definitions of zeros and poles of matrix-valued rational functions.
See e.g. \cite{kaash-field} for an introduction to the latter.

\begin{cor}
  Let $\Phi$ be a $\mathbb C^{n\times n}$-valued rational Carath\'eodory function satisfying the symmetry \eqref{2-2}. Then the poles and zeros of $\Phi$ and $\Phi^{-1}$ are on the
  unit circle.
\end{cor}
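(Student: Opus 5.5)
Since $\Phi$ is a Carath\'eodory function satisfying \eqref{2-2}, its extension is meromorphic in $\mathbb C$. By the characterization in Section \ref{sec-2} it then has the form \eqref{link},
\[
\Phi(\lambda)=iE+\sum_{\ell=1}^m M_\ell\frac{e^{it_\ell}+\lambda}{e^{it_\ell}-\lambda}.
\]
The poles of $\Phi$ are therefore among the points $e^{it_\ell}\in\mathbb T$, and the poles of $\Phi$ lie on the unit circle.

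For the zeros, and for $\Phi^{-1}$, I will use the positivity of the real part on both sides of $\mathbb T$. In $\mathbb D$ we have ${\rm Re}\,\Phi\ge 0$. By \eqref{2-2}, for $\lambda\in\mathbb E$,
\[
{\rm Re}\,\Phi(\lambda)=-{\rm Re}\,\Phi(1/\overline{\lambda})\le 0 .
\]
Suppose $\det\Phi(\lambda_0)=0$ at some $\lambda_0\notin\mathbb T$ that is not a pole, and take $\xi\ne 0$ with $\Phi(\lambda_0)\xi=0$. Then $\xi^*({\rm Re}\,\Phi(\lambda_0))\xi=0$.

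The real part is harmonic. The scalar function $u(\lambda)=\xi^*({\rm Re}\,\Phi(\lambda))\xi$ is nonnegative and harmonic in $\mathbb D$, or nonpositive in $\mathbb E$, and it vanishes at an interior point. By the minimum principle $u\equiv 0$ on that component. Since $\Phi$ is rational and satisfies \eqref{2-2}, $u$ then vanishes on both components, which forces $\xi^*M_\ell\xi=0$ for all $\ell$. Because the $M_\ell\ge0$, this gives $M_\ell\xi=0$, so $\Phi(\lambda)\xi=iE\xi$ for every $\lambda$.

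This is the main obstacle. In this degenerate case $\det\Phi\equiv0$ or $\Phi^{-1}$ is not defined as a rational function unless $iE\xi\ne0$. I will handle it by noting that the statement implicitly assumes $\det\Phi\not\equiv0$, which is needed for $\Phi^{-1}$ to exist. I will then reduce to the scalar case, which is the case the paper says is needed. For $n=1$ the degenerate situation means $\Phi\equiv iE$ is constant. Otherwise the argument gives $\Phi(\lambda_0)\ne0$ off $\mathbb T$.

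Thus the zeros of $\Phi$, which are the poles of $\Phi^{-1}$, lie on $\mathbb T$. In the matrix case I will phrase the argument through $\det\Phi$: $\det\Phi(\lambda_0)=0$ off $\mathbb T$ would contradict strict positivity of ${\rm Re}\,\Phi$ along the direction $\xi$.

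Finally, $\Phi^{-1}$ is again Carath\'eodory in $\mathbb D$, since ${\rm Re}\,\Phi^{-1}=\Phi^{-1}({\rm Re}\,\Phi)\Phi^{-*}\ge0$. It also satisfies \eqref{2-2}, because inversion commutes with $-(\cdot(1/\overline\lambda))^*$. The same argument applied to $\Phi^{-1}$ therefore puts its poles, which are the zeros of $\Phi$, and its zeros on $\mathbb T$.

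Alternatively, the trace formula of Section \ref{Tr1} gives the same conclusion. There ${\rm Tr}\,\Phi^{-1}\Phi'={\rm Tr}\,\Delta$, and ${\rm Tr}\,\Delta$ is analytic off $\mathbb T$ because $U_\pm$ are unitary. Hence the logarithmic derivative of $\det\Phi$ has no poles off $\mathbb T$, and $\det\Phi$ has neither zeros nor poles there.
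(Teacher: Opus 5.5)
Your argument is correct, and its last paragraph is the paper's proof. There, $\Phi$ and $\Phi^{-1}$ are both rational Carath\'eodory functions satisfying \eqref{2-2}, hence both of the form \eqref{link}. So the poles of each lie at jump points on $\mathbb T$, and the zeros of one are the poles of the other. What you add is the step the paper's one-line proof leaves implicit: $\Phi(\lambda)$ is invertible for every $\lambda\notin\mathbb T$ when $\det\Phi\not\equiv0$. Without it, $\Phi^{-1}$ need not be analytic in $\mathbb D$ and would have no Herglotz representation. Your route to it, $\Phi(\lambda_0)\xi=0\Rightarrow\xi^*M_\ell\xi=0\Rightarrow M_\ell\xi=0$ for all $\ell$, is the right one. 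The minimum principle is not even needed, since every Poisson weight in ${\rm Re}\,\Phi(\lambda_0)$ is strictly positive.

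You do, however, mishandle the conclusion of that step. Once $\Phi(\lambda)\xi=iE\xi$ for all $\lambda$, evaluating at $\lambda_0$ gives $iE\xi=0$. Hence $\Phi(\lambda)\xi\equiv0$ and $\det\Phi\equiv0$, contradicting the standing assumption $\det\Phi\not\equiv0$. There is no ``$iE\xi\neq0$'' case and no obstacle. There is also no need to reduce to $n=1$, because the argument is the same for every $n$.

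Your stated matrix-case justification, ``strict positivity of ${\rm Re}\,\Phi$ along $\xi$'', is false in general. If $M_\ell\xi=0$ for all $\ell$ but $E\xi\neq0$, then $\xi^*({\rm Re}\,\Phi)\xi\equiv0$ although $\Phi(\lambda)\xi\neq0$. Replace it by the one-line contradiction above.

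Finally, drop the trace-formula alternative or label it a remark. It only concerns a $\Phi$ coming from a pair $U_\pm$. Getting $U_\pm$ from a given $\Phi$ through Theorem \ref{isp} needs the extra hypothesis $\det{\rm Re}\,\Phi\not\equiv0$ and the space $\mathcal L(\Phi^{-1})$. Under that hypothesis, invertibility of $\Phi$ in $\mathbb D$ is already immediate, so the alternative adds nothing.
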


\begin{proof}
In the representation of both $\Phi$ and $\Phi^{-1}$ the measure is a jump measure with a finite number of jumps.
  \end{proof}

Recalling \eqref{rtg}, when $n=1$ we have the following result; recall that since $\Phi$ is rational and satisfies the symmetry \eqref{2-2} its poles or zeros are on the unit circle.
        \begin{thm}
          \label{ththth}
          Assume $n=1$ and $\Phi$ rational and satisfies \eqref{2-2}. Then,
        \begin{equation}
          {\rm Tr}\,\,\left\{ f(U_-)-f(U_+)\right\}=\sum_{\lambda \,\,\mbox{\rm a pole or zero of $\Phi$}} f(\lambda).
        \end{equation}
      \end{thm}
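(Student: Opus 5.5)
Our starting point is the functional-calculus formula of Section \ref{Tr1}:
\[
{\rm Tr}\,\{f(U_-)-f(U_+)\}=\frac{1}{2\pi i}\int_{|z|=R}f(z)\,\frac{\Phi'(z)}{\Phi(z)}\,dz ,\qquad R>1 .
\]
For $n=1$ the trace in the integrand is just the logarithmic derivative $\Phi'/\Phi$. This formula follows from \eqref{tre-delta-2} by integrating $f(z){\rm Tr}\,\Delta(z)$ over $|z|=R$, since $\sigma(U_\pm)\subset\mathbb T$ lies inside that circle.

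By hypothesis $\Phi$ is rational and satisfies \eqref{2-2}. By the theorem on meromorphic extensions, $\Phi$ is then given by \eqref{link}, and by the Corollary all poles and zeros of $\Phi$ (equivalently the poles of $\Phi$ and of $\Phi^{-1}$) lie on $\mathbb T$. Hence $\Phi'/\Phi$ is a rational function whose only finite singularities are on $\mathbb T$, and we may evaluate the contour integral by residues. Implicitly we take $f$ analytic in a neighbourhood of the closed disk $|z|\le R$; this is what the functional-calculus formula already requires.

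At a point $\zeta\in\mathbb T$ where $\Phi$ has a zero or pole of order $m_\zeta$, the function $\Phi'/\Phi$ has a simple pole with residue $\pm m_\zeta$. The residue theorem then gives
\[
\frac{1}{2\pi i}\int_{|z|=R} f\,\frac{\Phi'}{\Phi}\,dz=\sum_{\zeta\ \text{zero}} m_\zeta f(\zeta)-\sum_{\zeta\ \text{pole}} m_\zeta f(\zeta).
\]
The main step, and the expected obstacle, is to reconcile this with the stated formula, which has a plain sum over poles and zeros with coefficient $+1$. Two checks are needed.

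\emph{Multiplicities.} The poles of a scalar function of the form \eqref{link} are simple, since $M_\ell>0$. Likewise $\Phi^{-1}$ is again a rational Carath\'eodory function satisfying \eqref{2-2}, so its poles, i.e.\ the zeros of $\Phi$, are simple. Hence every $m_\zeta=1$.

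\emph{Signs.} Here I would track the orientation and sign conventions carefully. The functional-calculus formula comes from \eqref{eqw1}, where the logarithmic derivative of $\det(U_--\lambda)(U_+-\lambda)^{-1}$ is taken, and the order of $U_\pm$ in the statement must be matched with the sign of each residue. For a pole of $\Phi$ one has ${\rm Res}\,\Phi'/\Phi=-1$, so the contribution $-f(\zeta)$ enters the integral formula. I would check directly, on $\Phi(\lambda)=\frac{1+\lambda}{1-\lambda}$ in the model, that the conventions yield $+f(1)+f(-1)$; in that example the pole at $1$ and the zero at $-1$ must both appear with weight $+1$. If the signs do not agree, the statement should be read with the signed count (zeros minus poles), or the two sides interpreted via the orientation fixed in \eqref{eqw1}. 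Making this sign bookkeeping explicit is where I expect the real work.

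Finally, I would note that the point at infinity contributes nothing. $\Phi$ is analytic at $\infty$ with $\Phi(\infty)=-\Phi(0)^*$ invertible, so $\Phi'/\Phi=O(z^{-2})$ there, and no residue at infinity arises when the contour is enlarged.
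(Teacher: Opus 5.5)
Your residue computation is right, but the step you label as the main obstacle, reconciling it with the stated formula, cannot succeed. The statement as written is false, so no sign or orientation bookkeeping will recover it. Take $f\equiv 1$: the left side is ${\rm Tr}\,(I-I)=0$, while the right side counts the poles and zeros of $\Phi$, which is positive for non-constant $\Phi$.

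The check you propose also comes out against the statement. For $\Phi(\lambda)=\frac{1+\lambda}{1-\lambda}$ the measures of $\Phi$ and $\Phi^{-1}$ are unit masses at $1$ and $-1$. So the model is one-dimensional with $U_+=1$ and $U_-=-1$. In \eqref{phi-model} this is $B=-1$, $C=\sqrt 2$, and then \eqref{law-1} forces $U_-=-1$. Hence ${\rm Tr}\,\{f(U_-)-f(U_+)\}=f(-1)-f(1)$, and your assertion that the pole at $1$ and the zero at $-1$ ``must both appear with weight $+1$'' is wrong.

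More generally, for rational $\Phi$ with $n=1$ the space $\mathcal H\cong\mathbf L_2(d\mu)$ is finite-dimensional. There $U_+$ is multiplication by $e^{it}$, with simple eigenvalues at the poles of $\Phi$. Likewise $U_-$ has simple eigenvalues at the poles of $\Phi^{-1}$, i.e.\ the zeros of $\Phi$. So directly
\[
{\rm Tr}\,\{f(U_-)-f(U_+)\}=\sum_{\zeta\ {\rm zero\ of}\ \Phi}f(\zeta)-\sum_{\zeta\ {\rm pole\ of}\ \Phi}f(\zeta),
\]
which is exactly your residue count.

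The paper's proof takes the same route as yours: the contour integral of $f\,\Phi'/\Phi$ over $|z|=R$, then residues. It reaches the unsigned sum only through an error in its Step 4, which asserts that every pole $w$ of $f\Phi'/\Phi$ has residue $f(w)$. This contradicts the paper's own \eqref{eqas-2}, where $(\lambda I_N-A)^{-1}$ enters with a minus sign, so at a pole of $\Phi$ the residue is $-f(w)$. You should therefore drop the hedge and state plainly that what you prove is the signed formula above.

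For that statement your argument is complete, and somewhat more economical than the paper's. The paper re-derives the contour identity for traces via the Neumann series at infinity, a uniform bound on ${\rm Tr}\,(U_-^{\ell}-U_+^{\ell})$, and truncation; you simply quote the theorem of Section \ref{Tr1}. If you quote it, add one line justifying the interchange of trace and integral. By \eqref{rtg}, $\Delta(z)-\Delta(z')$ has rank at most $2n$, so its trace norm is at most $2n\|\Delta(z)-\Delta(z')\|$. Hence $\Delta$ is continuous in trace norm on $|z|=R$, and the trace commutes with the integral.
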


      \begin{proof} We write
        \begin{equation}
          f(\lambda)=\sum_{\ell=0}^\infty f_\ell\lambda^\ell
        \end{equation}
        and divide the proof into a number of steps.\\

        STEP 1: {\sl It holds that
\[
  ( \lambda I_{\mathcal H}-U_-)^{-1}-({\lambda}I_{\mathcal H}-U_+)^{-1}=\sum_{\ell=0}^\infty
  \frac{1}{\lambda^{\ell+1}}\left(U_-^\ell-U_+^\ell\right),
\]
where the convergence is in norm for $|\lambda|>1$.\\
}

The next step is an improvement of \eqref{lala}, due to the meromorphicity of $\Phi$.\\

STEP 2: {\sl There exists a constant $c$ such that

\begin{equation}
  | {\rm Tr}\,(U_-^{\ell+1}-U_+^{\ell+1})|\le c ,\quad \ell=0,1,\ldots
  \label{lalala}
\end{equation}
}\\

Indeed, from \eqref{eqas-2} we have:\\

STEP 3: {\sl With  $R>1$ one has:
  \begin{equation}
    \frac{1}{2\pi i}\int_{|\lambda|=R}f(\lambda)\left(( \lambda I_{\mathcal H}-U_-)^{-1}-({\lambda}I_{\mathcal H}-U_+)^{-1}\right)d\lambda=f(U_-)-f(U_+).
    \label{trace-987}
  \end{equation}
}
Let $\e>0$. From \eqref{lalala} there exists $N$ such that
\begin{equation}
  \label{7-16}
\sum_{\ell=0}^\infty\frac{|{\rm Tr}\, (U_-^\ell-U_+^\ell)|}{R^{\ell+1}}<\e.
  \end{equation}
  We write
  \[
    \begin{split}
      \frac{1}{2\pi i}\int_{|\lambda|=R}f(\lambda)\left({\rm Tr}\,\left(( \lambda I_{\mathcal H}-U_-)^{-1}-({\lambda}I_{\mathcal H}-U_+)^{-1}\right)\right)d\lambda&=\\
      &\hspace{-6cm}=
        \frac{1}{2\pi i}\int_{|\lambda|=R}f(\lambda)\left({\rm Tr}\,
        \left(\sum_{\ell=0}^\infty \frac{1}{\lambda^{\ell+1}}\left(U_-^\ell
        -U_+^\ell\right)\right)\right)d\lambda\\                                                          &\hspace{-6cm}
                                                                                                            =\frac{1}{2\pi i}\int_{|\lambda|=R}\left({\rm Tr}\,\left(\sum_{\ell=0}^N
                                                                                                            \frac{f(\lambda)}{\lambda^{\ell+1}}\left(U_-^\ell-U_+^\ell
                                                                                                            \right)\right)\right.\\
      &\hspace{-3.5cm}\left.
        +\sum_{\ell=N+1}^\infty \left(\frac{f(\lambda)}{\lambda^{\ell+1}}\left(U_-^\ell
        -U_+^\ell\right)\right)\right)d\lambda\\
    &\hspace{-6cm}=\spadesuit+\clubsuit,\end{split}
\]
with
\[
  \begin{split}
    \spadesuit&=\frac{1}{2\pi i}\int_{|\lambda|=R}\left({\rm Tr}\,\left(\sum_{\ell=0}^N
                \frac{f(\lambda)}{\lambda^{\ell+1}}\left(U_-^\ell-U_+^\ell\right)
                \right)\right)d\lambda\\
              &=\frac{1}{2\pi i}\int_{|\lambda|=R}\left(\sum_{\ell=0}^N
                \frac{f(\lambda)}{\lambda^{\ell+1}}\left({\rm Tr}\,\left(U_-^\ell
                -U_+^\ell\right)\right)\right)d\lambda\\
              &=\sum_{\ell=0}^N\left(\frac{1}{2\pi i}\int_{|\lambda|=R}
                \frac{f(\lambda)}{\lambda^{\ell+1}}d\lambda\right)
                \left({\rm Tr}\,\left(U_-^\ell-U_+^\ell\right)\right)\\
              &={\rm Tr}\,\left(\sum_{\ell=0}^Nf_\ell U_-^\ell\right)-{\rm Tr}\,
                \left(\sum_{\ell=0}^Nf_\ell U_+^\ell\right).
              \end{split}
\]

Furthermore, by \eqref{7-16},
\[
  |\clubsuit|\le\e \max_{|\lambda|=R}|f(\lambda)|
\]
which ends the proof of the step since $\e$ is arbitrary.\\

STEP 4: {\sl To conclude the proof we compute
  \begin{equation}
    \label{residue-567}
\frac{1}{2\pi i}\int_{|\lambda|=R}f(\lambda)\frac{\Phi^\prime(\lambda)}{\Phi(\lambda)}d\lambda.
    \end{equation}
}

The poles of the function $f(\lambda)\frac{\Phi^\prime(\lambda)}{\Phi(\lambda)}$ are all simple and on the unit circle, as is seen from formula \eqref{link} and the similar
formula for $\Phi^{-1}$. They correspond to the eigenvalues of $\mathcal A^\times$
and $\mathcal A$. Thus each pole, say $w$, has residue

\[
          {\rm Res}_{z=w}f(z)\frac{\Phi^\prime(z)}{\Phi(z)}=f(w).
        \]
        \end{proof}

          \section{Application to canonical discrete systems}
        \setcounter{equation}{0}
        \label{seccano}
        {\bf  Prelude:} Let ${\sigma}$ be a function analytic and contractive in the open unit disk, different from a finite Blaschke product or from a unitary constant.
        One can associate to ${\sigma}$ an infinite
        sequence of numbers $\nu_0,\nu_1\ldots\in\mathbb D$, called the Schur parameters of ${\sigma}$,  which uniquely characterizes ${\sigma}$ via the recursion
        \[
          \begin{split}
            {\sigma}_0(\lambda)&={\sigma}(\lambda),\\
            \nu_0&={\sigma}(0),\\
            {\sigma}_{\ell+1}(\lambda)&=\frac{{\sigma}_\ell(\lambda)-\nu_\ell}{\lambda(1-\overline{\nu_\ell} {\sigma}_\ell(\lambda))},\\
            \nu_{\ell+1}&=\sigma_{\ell+1}(0),\quad \ell=0,1,\ldots .
            \end{split}
          \]
          This is the celebrated Schur algorithm (see \cite{schur} for I. Schur original paper, and \cite{goh1} for a translation and a survey of the topic). The formula
          \[
            {\sigma}(\lambda)=\nu_0+\frac{\lambda(1-|\nu_0|^2)}{\overline{\nu_0}\lambda-
            \dfrac{1}{\nu_1+\dfrac{\lambda(1-|\nu_1|^2)}{\overline{\nu_1}\lambda-\cdots}}}
                \]
                (see \cite[theorem 77.1 p. 285]{wall}) allows to see that:

                \begin{prop}
In the above hypothesis. if $\nu_0,\nu_1\ldots$ are the Schur parameters of $\sigma$, then $-\nu_0,-\nu_1\ldots$ are the Schur parameters of $-\sigma$.
                  \end{prop}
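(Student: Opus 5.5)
Set $\tau=-\sigma$ and run the Schur recursion for $\tau$ alongside the one for $\sigma$, writing $\tau_\ell$ and $\eta_\ell$ for its functions and parameters. By induction on $\ell$ I will show that
\[
\tau_\ell=-\sigma_\ell\quad\text{and}\quad \eta_\ell=-\nu_\ell,\qquad \ell=0,1,\ldots .
\]
Since $\tau$ is again analytic and contractive in $\mathbb D$, and is neither a finite Blaschke product nor a unitary constant exactly when $\sigma$ is not, the recursion for $\tau$ is defined for all $\ell$.

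For the base case, $\tau_0=-\sigma=-\sigma_0$, so $\eta_0=\tau_0(0)=-\sigma(0)=-\nu_0$. For the inductive step, assume $\tau_\ell=-\sigma_\ell$ and $\eta_\ell=-\nu_\ell$. Then
\[
\tau_{\ell+1}(\lambda)=\frac{-\sigma_\ell(\lambda)+\nu_\ell}{\lambda\bigl(1-(-\overline{\nu_\ell})(-\sigma_\ell(\lambda))\bigr)}
=-\frac{\sigma_\ell(\lambda)-\nu_\ell}{\lambda(1-\overline{\nu_\ell}\sigma_\ell(\lambda))}=-\sigma_{\ell+1}(\lambda).
\]
The denominators agree because the two sign changes cancel. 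Evaluating at $0$ gives $\eta_{\ell+1}=-\nu_{\ell+1}$, which completes the induction.

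As a cross-check I would also verify this with the continued fraction from \cite{wall}. Replacing every $\nu_j$ by $-\nu_j$ leaves each $|\nu_j|^2$ unchanged and flips the sign of each term $\nu_j$ and $\overline{\nu_j}\lambda$. Taking a factor $-1$ out of each level then shows that the whole expression is multiplied by $-1$. Since the Schur parameters determine the function uniquely, this again identifies the parameters of $-\sigma$.

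No step is a real obstacle; the only care needed is to check that the pattern of signs in the denominator cancels, which the display above confirms.
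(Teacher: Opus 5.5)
Your proof is correct, but your main argument is not the route the paper takes. The paper offers no computation: it displays the continued fraction expansion of $\sigma$ in terms of its Schur parameters \cite{wall} and says this ``allows to see'' the proposition. In other words, it relies on exactly the observation you use only as a cross-check, namely that changing every $\nu_j$ into $-\nu_j$ changes the sign of the whole fraction.

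Your induction is more elementary and fully self-contained. It uses only the defining recursion, the cancellation $\overline{(-\nu_\ell)}\,(-\sigma_\ell)=\overline{\nu_\ell}\,\sigma_\ell$ in the denominator, and evaluation at $0$. It computes the parameters of $-\sigma$ directly, with no appeal to uniqueness. It also shows along the way that the algorithm for $-\sigma$ never stops, since $|{-\nu_\ell}|=|\nu_\ell|<1$ for all $\ell$.

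The continued fraction argument, by contrast, is only formal for an infinite fraction. To make it rigorous, you check the sign flip on the finite approximants and pass to the limit using Wall's convergence theorem. You then also need the existence half of Schur's theorem: that $-\nu_0,-\nu_1,\ldots$ is the parameter sequence of some Schur function, which Wall's theorem then identifies with $-\sigma$. That existence statement, rather than uniqueness alone, is what actually closes your cross-check.

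The paper's route is brief once the expansion is available, and it presents the statement as an odd symmetry of the whole expansion. Yours is a complete proof from the definition of the Schur algorithm alone.
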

                  The Carath\'eodory function
                  \begin{equation}
                    \label{vvvv}
                    \psi(\lambda)=  \frac{1-\lambda \sigma(\lambda)}{1+\lambda \sigma(\lambda)}
                  \end{equation}
                  defined (up to an inversion) in \cite[(3.7)]{MR2077215} plays an important role in the theory. Note that  replacing $\sigma$ by $-\sigma$
                  sends $\varphi$ to $\varphi^{-1}$.\\

       {\bf Canonical discrete systems:}
                   The sequence of Schur coefficients plays the role of a discrete potential in an underlying linear system, called {\sl canonical discrete system},
  which arises in particular from a discretization of the telegrapher
 equation (and  systems of differential equations arising in mathematical physics; see \cite{MR1371140,ope-iden} for the latter). When one changes the sign of the potential
 the Weyl function $N$   (whose expression inthe present setting is recalled below)              goes to $-1/N$; an illustration is given in \cite{MR1960423}. As just remarked,
 the same happens in the discrete case. In either cases the move from the Weyl function to its inverse corresponds to interchanging the role of
 two operators (self-adjoint or unitary).\\

Our starting point is an infinite sequence $-\rho_0,-\rho_1,\ldots,$ of numbers in the open unit disk, which are the
      Schur coefficients of a uniquely defined function $r({\lambda})$, analytic and contractive in the open unit disk (a Schur function).
      Since the sequence is assumed infinite, $r({\lambda})$ is not a finite Blaschke product. Throughout the section we will assume that
      \begin{equation}
        \label{wiener}
        \sum_{\ell=0}^\infty |\rho_\ell|<\infty.
\end{equation}
      The sequence $\rho_0,\rho_1,\ldots$ corresponds to $-r({\lambda})$.\smallskip

      Two objects are associated  with $r$ in a natural way: A first-order discrete system and a unitary operator colligation.  A first-order discrete system is a
      recursion of the form
      \begin{equation}
        \label{tyu89}
X_{\ell+1}({\lambda})=\begin{pmatrix}1&-\rho_\ell\\ -\overline{\rho_\ell}&1\end{pmatrix}\begin{pmatrix}{\lambda}&0\\0&1\end{pmatrix} X_\ell({\lambda}),\quad \ell=0,1,\ldots
\end{equation}
with appropriate boundary condition at $\ell=0$ or as $\ell\rightarrow\infty$; such systems appear in the theory of orthogonal polynomials on the unit circle or associated to an
Hankel operator; they also help to related the {C}arath\'eodory-{T}oeplitz and the {N}ehari extension problems.
See e.g. \cite{adamyan95,MR2000c:33001} for the latter.  See also \cite{MR2002d:47021,ag-def} for the scalar case and \cite{ag-def-vec, egl-95} for the matrix-valued case
in the rational setting.
The function $r({\lambda})$ is called reflection coefficient function. Under the hypothesis \eqref{wiener} one can associated to the system a number of other functions,
called the {\sl characteristic spectral functions}, of which we mention:
\begin{enumerate}

\item The scattering function, $s({\lambda})$.

\item The spectral function, $W({\lambda})$.

\item The Weyl coefficient function, $N({\lambda})$.

  \item The asymptotic equivalence matrix function, $V({\lambda})$.
  \end{enumerate}

      Writing
    \[
      X_0({\lambda})=\begin{pmatrix}\alpha_0({\lambda})&\beta_0({\lambda})\\
        \gamma_0({\lambda})&\delta_0({\lambda})\end{pmatrix}
    \]
    the solution to \eqref{tyu89} corresponding to the boundary condition
    \begin{equation}
      \label{cond-yui}
\lim_{\ell\rightarrow\infty}X_n({\lambda})=I_2,\quad {\lambda}\in\mathbb T.
      \end{equation}
    we can express characteristic spectral functions of the system in terms of the components of $X_0$ (compare \eqref{vvv} and \eqref{vvvv}):

    \begin{eqnarray}
      r({\lambda})&=&\frac{\beta_0}{\alpha_0}(1/{\lambda}),\\
      \label{szscattering}
      s({\lambda})&=&\frac{1}{{\lambda}}\frac{\alpha_0({\lambda}){\lambda}+\beta_0({\lambda})}{\gamma_0({\lambda}){\lambda}+\delta_0({\lambda})},\\
      N({\lambda})&=&i\frac{1-{\lambda}r({\lambda})}{1+{\lambda}r({\lambda})},\\
      \label{vvv}
      W({\lambda})&=&{\rm Im}\, N({\lambda}),\quad |{\lambda}|=1,\\
      V({\lambda})&=&\begin{pmatrix}\delta_0({\lambda})&-\frac{\beta_0({\lambda})}{{\lambda}}\\
        -{\lambda}\gamma_0({\lambda})&\alpha_0({\lambda})\end{pmatrix}.
      \end{eqnarray}

      See \cite[(2.22) p. 40]{MR2216236} for $V({\lambda})$. note that the proofs there were given for the rational case, when the coefficients $\rho_\ell$ admit a realization
      of the form
      \begin{equation}
        \rho_\ell=-ca^\ell(I_p-\Delta a^{*(\ell+1)}\Omega a^{\ell+1})^{-1}b,
      \end{equation}
      where $a,b,c$ are matrices of appropriate sizes and form a minimal triple, $\rho(a)<1$ and $\Omega$ and $\Delta$ are the unique solutions of the Stein equations
      \[
\Delta-a\Delta a^*=bb^*\quad{\rm and}\quad \Omega-a^*\Omega a=c^*c.
\]
Because tthe triple $(a,b,c)$ is minimal both $\Delta$ and $\Omega$ are strictly positive. They
satisfy $\Omega^{-1}>\Delta$. For the introduction of this class of coefficients, see \cite{MR2002d:47021}.\\

  From the linear system point of view, the reflection coefficient function seems the most important. On the other hand the scattering function, which we now define, is
  more important from the physics point of view.

  \begin{thm}
    Assuming \eqref{wiener} the system \eqref{tyu89} has a unique $\mathbb C^{2\times 2}$-valued solution with the boundary condition \eqref{cond-yui}.
    \end{thm}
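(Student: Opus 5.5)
The plan is to construct the solution backwards from infinity, using products of the one-step transfer matrices, and to let the summability hypothesis \eqref{wiener} control convergence. For $\ell\ge 0$ write
\[
M_\ell(\lambda)=\begin{pmatrix}1&-\rho_\ell\\ -\overline{\rho_\ell}&1\end{pmatrix},\qquad D(\lambda)=\begin{pmatrix}\lambda&0\\0&1\end{pmatrix},
\]
so that \eqref{tyu89} reads $X_{\ell+1}=M_\ell D X_\ell$. Since $|\rho_\ell|<1$, each $M_\ell$ is invertible with $\det M_\ell=1-|\rho_\ell|^2>0$. For $\lambda\in\mathbb T$ the matrix $D(\lambda)$ is unitary and invertible, so the recursion can be run in reverse: $X_\ell=D^{-1}M_\ell^{-1}X_{\ell+1}$. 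Uniqueness is then immediate. If two solutions satisfy \eqref{cond-yui}, their difference $Y_\ell$ satisfies the same linear recursion with $Y_\ell\to 0$. Since $X_\ell=\prod_{k\ge \ell}(\cdots)$-type products are uniformly bounded (see below), we get $Y_\ell=\big(D^{-1}M_\ell^{-1}\cdots D^{-1}M_{m-1}^{-1}\big)Y_m$, and letting $m\to\infty$ gives $Y_\ell=0$.

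For existence, note that the naive candidate $X_\ell=\lim_m (D^{-1}M_\ell^{-1})\cdots(D^{-1}M_{m-1}^{-1})$ does not converge, because the factors $D^{-1}$ rotate. I would instead pass to the interaction picture. Put $D^{\ell}=\mathrm{diag}(\lambda^\ell,1)$ and look for $X_\ell=D^{\ell}Z_\ell$. The recursion becomes
\[
Z_{\ell+1}=D^{-\ell-1}M_\ell D^{\ell+1}Z_\ell=(I_2+E_\ell)Z_\ell,
\]
where
\[
E_\ell=\begin{pmatrix}0&-\rho_\ell\lambda^{-\ell-1}\\ -\overline{\rho_\ell}\lambda^{\ell+1}&0\end{pmatrix}.
\]
For $\lambda\in\mathbb T$ we have $\|E_\ell\|=|\rho_\ell|$, so $\sum\|E_\ell\|<\infty$ by \eqref{wiener}. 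The standard product lemma then applies: if $\sum\|E_\ell\|<\infty$ and each $I+E_\ell$ is invertible, the partial products $P_{\ell,m}=(I+E_{m-1})\cdots(I+E_\ell)$ converge as $m\to\infty$ to an invertible limit $P_\ell$, with $\|P_{\ell,m}\|\le \exp\big(\sum_k|\rho_k|\big)$. The bound and the Cauchy property follow from $\|P_{\ell,m'}-P_{\ell,m}\|\le e^{\sum|\rho_k|}\big(e^{\sum_{k\ge m}|\rho_k|}-1\big)$. The same applies to the inverses $(I+E_\ell)^{-1}=(1-|\rho_\ell|^2)^{-1}(I-E_\ell)$, and $\prod (1-|\rho_\ell|^2)^{-1}$ converges by \eqref{wiener}. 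I then define
\[
Z_\ell=\lim_{m\to\infty}(I+E_\ell)^{-1}\cdots(I+E_{m-1})^{-1},
\]
which satisfies $Z_{\ell+1}=(I+E_\ell)Z_\ell$ and $Z_\ell\to I_2$ as $\ell\to\infty$, since the tail sums vanish. Hence $X_\ell=D^\ell Z_\ell$ solves \eqref{tyu89}. Uniform boundedness of these products also justifies the uniqueness argument above.

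The main obstacle is the boundary condition itself. With this construction $X_\ell=D^\ell Z_\ell$ does not tend to $I_2$ as written, because $D^\ell$ does not converge on $\mathbb T$. I therefore expect that \eqref{cond-yui} has to be read with the free dynamics factored out, namely as $\lim_{\ell}D(\lambda)^{-\ell}X_\ell(\lambda)=I_2$; the indexing typo ``$X_n$'' in \eqref{cond-yui} supports this reading. This is the normalization that produces $X_0=Z_0$, an infinite product of the $(I+E_\ell)^{-1}$, whose entries $\alpha_0,\beta_0,\gamma_0,\delta_0$ give the spectral functions. I would state the theorem under that interpretation and prove existence and uniqueness exactly as above: the difference of two solutions, after multiplication by $D^{-\ell}$, is transported by the uniformly bounded products and hence vanishes. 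As a side remark, the absolute convergence gives continuity of $X_0$ on $\mathbb T$ (Wiener algebra), because each entry is a uniformly convergent series of monomials $\lambda^{k}$.
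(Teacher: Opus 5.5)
The paper states this theorem without proof, so there is no argument in the paper to compare yours with. Your construction is correct and is the standard way to prove such a statement: you factor out the free dynamics via $X_\ell=\mathrm{diag}(\lambda^\ell,1)Z_\ell$, so that $Z_{\ell+1}=(I_2+E_\ell)Z_\ell$ with $\|E_\ell\|=|\rho_\ell|$ on $\mathbb T$, and you take $Z_\ell$ to be the convergent product $(I_2+E_\ell)^{-1}(I_2+E_{\ell+1})^{-1}\cdots$.

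Your reinterpretation of \eqref{cond-yui} is also correct, but it is forced rather than optional. Justify it with a proof instead of the typo ``$X_n$'', which is no evidence either way. By \eqref{wiener}, $\rho_\ell\to 0$. Hence if $X_\ell(\lambda)\to I_2$, then
\[
X_{\ell+1}(\lambda)=\begin{pmatrix}1&-\rho_\ell\\ -\overline{\rho_\ell}&1\end{pmatrix}\begin{pmatrix}\lambda&0\\0&1\end{pmatrix}X_\ell(\lambda)\longrightarrow\begin{pmatrix}\lambda&0\\0&1\end{pmatrix},
\]
which forces $\lambda=1$. So for $\lambda\in\mathbb T\setminus\{1\}$ no solution satisfies the condition as printed, and the theorem is false as literally stated. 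The only sensible normalization is $\lim_{\ell\to\infty}\mathrm{diag}(\lambda^{-\ell},1)X_\ell(\lambda)=I_2$. This is also the normalization implicitly used in the paper's next theorem, where the first component of the solution behaves like $\lambda^\ell s(\lambda)$.

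With that normalization, drop your first uniqueness paragraph. It is phrased for the literal condition and refers loosely to ``$\prod_{k\ge\ell}$-type'' products. Keep instead the argument of your last paragraph, made explicit as follows. Let $Y_\ell$ be the difference of two solutions and set $Q_\ell=\mathrm{diag}(\lambda^{-\ell},1)Y_\ell$. Then $Q_\ell=(I_2+E_\ell)^{-1}\cdots(I_2+E_{m-1})^{-1}Q_m$. For $\lambda\in\mathbb T$, each $E_k$ is Hermitian with eigenvalues $\pm|\rho_k|$, so $\|(I_2+E_k)^{-1}\|=(1-|\rho_k|)^{-1}$. Hence $\|Q_\ell\|\le\prod_{k\ge\ell}(1-|\rho_k|)^{-1}\,\|Q_m\|\to 0$ as $m\to\infty$, and so $Y_\ell=0$.

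The existence half is fine as written: the Cauchy estimate for the products, the relation $Z_{\ell+1}=(I_2+E_\ell)Z_\ell$, and $Z_\ell\to I_2$ from the vanishing tails.
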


  \begin{thm}
    Assuming \eqref{wiener} the system \eqref{tyu89} has a unique $\mathbb C^2$-valued solution $Y_n({\lambda})$ such that\\
    $(1)$ $\begin{pmatrix}1&-1\end{pmatrix}Y_0({\lambda})=0$, and\\
    $(2)$ $\begin{pmatrix}0&1\end{pmatrix}Y_\ell({\lambda})=1+o(\ell)$, $|{\lambda}|=1$.
    Then, it holds that
    \[
      \begin{pmatrix}1&0\end{pmatrix}Y_\ell({\lambda})={\lambda}^\ell s({\lambda})+o(\ell),\quad |{\lambda}|=1,
    \]

    where $s$ is given by \eqref{szscattering}.
  \end{thm}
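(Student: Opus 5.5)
The plan is to reduce everything to the $\mathbb C^{2\times 2}$-valued solution given by the previous theorem (the one normalized by \eqref{cond-yui}) and then pick the right constant vector.

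First, I make explicit what \eqref{cond-yui} says in the scalar-shift convention of \eqref{tyu89}. The unperturbed system ($\rho_\ell\equiv 0$) has fundamental solution ${\rm diag}({\lambda}^\ell,1)$. So I read the boundary condition as
\[
X_\ell({\lambda})-\begin{pmatrix}{\lambda}^\ell&0\\0&1\end{pmatrix}\longrightarrow 0,\qquad |{\lambda}|=1,\ \ell\rightarrow\infty.
\]
Under \eqref{wiener} this follows from the previous theorem by the usual Levinson-type product argument: the products $\prod(1+|\rho_k|)$ converge. Throughout I also read $o(\ell)$ as a term tending to $0$ as $\ell\rightarrow\infty$.

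Second, every $\mathbb C^2$-valued solution has the form $Y_\ell=X_\ell v$ for a constant $v\in\mathbb C^2$. This holds because $\det\begin{pmatrix}1&-\rho_\ell\\-\overline{\rho_\ell}&1\end{pmatrix}{\rm diag}({\lambda},1)={\lambda}(1-|\rho_\ell|^2)\neq 0$ on $\mathbb T$, so each $X_\ell$ is invertible and $X_\ell X_0^{-1}Y_0$ is the unique solution with initial value $Y_0$. Uniqueness of $Y$ thus reduces to uniqueness of $v=(v_1,v_2)^t$, and conditions $(1)$ and $(2)$ are two linear equations for $v$.

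From the asymptotics, $\begin{pmatrix}0&1\end{pmatrix}X_\ell v=v_2+o(1)$, so $(2)$ forces $v_2=1$. Condition $(1)$ reads $(\alpha_0-\gamma_0)v_1=\delta_0-\beta_0$. This determines $v_1$ provided $\alpha_0({\lambda})\neq\gamma_0({\lambda})$ on $\mathbb T$. I would prove that nondegeneracy from the $J$-unitarity of the transfer matrices: each factor $\begin{pmatrix}1&-\rho\\-\overline\rho&1\end{pmatrix}$ is $(1-|\rho|^2)$ times a ${\rm diag}(1,-1)$-unitary matrix, so $X_0$ is, up to a positive scalar, $J$-unitary on $\mathbb T$. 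For such a matrix $|\gamma_0|<|\alpha_0|$, hence $\alpha_0-\gamma_0\neq 0$. This settles existence and uniqueness.

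Finally, $\begin{pmatrix}1&0\end{pmatrix}Y_\ell={\lambda}^\ell v_1+o(1)$. It remains to identify $v_1=(\delta_0-\beta_0)/(\alpha_0-\gamma_0)$ with $s({\lambda})$ as defined in \eqref{szscattering}.

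I expect this identification to be the main obstacle. It is a matter of conventions: where the factor ${\lambda}$ sits, whether the normalization is at $\ell$ or $\ell+1$, and the sign convention $\rho$ versus $-\rho$. My first attempt would be to insert the diagonal factor ${\rm diag}({\lambda},1)$ between $X_0$ and the boundary vector, i.e. to apply condition $(1)$ to ${\rm diag}({\lambda},1)X_0v$ or to ${\rm diag}(1,{\lambda}^{-1})$-conjugated data. That turns $(\alpha_0,\beta_0)$ into $(\alpha_0{\lambda},\beta_0)$ and produces the combination $\frac{1}{{\lambda}}\frac{\alpha_0{\lambda}+\beta_0}{\gamma_0{\lambda}+\delta_0}$. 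The determinant/$J$-unitarity relations among $\alpha_0,\beta_0,\gamma_0,\delta_0$ (namely $\delta_0=\overline{\alpha_0}$ and $\gamma_0=\overline{\beta_0}$ on $\mathbb T$, up to the scalar factor) then convert this into the form of \eqref{szscattering}. If the direct computation mismatches, I would fix the convention by checking the case $\rho_\ell\equiv 0$, where $X_0=I_2$ and the formula gives $s\equiv 1$.
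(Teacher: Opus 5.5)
The paper states this theorem without proof, so there is no argument of the paper's to compare with. Judged on its own, your proposal is sound up to the last step and has a genuine gap exactly where you expect one.

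\textbf{What works.} Your reading of \eqref{cond-yui} as $\mathrm{diag}(\lambda^{-\ell},1)X_\ell(\lambda)\to I_2$ on $\mathbb T$ is the right one: the literal condition $X_\ell\to I_2$ is impossible for $\lambda\neq 1$. The product argument does establish it under \eqref{wiener}. Reading $o(\ell)$ as $o(1)$ is forced; otherwise $(2)$ holds for every solution, since all solutions are bounded on $\mathbb T$, and uniqueness fails. The reduction $Y_\ell=X_\ell v$, $v_2=1$, $v_1=(\delta_0-\beta_0)/(\alpha_0-\gamma_0)$ and $\alpha_0\neq\gamma_0$ are correct. Two minor fixes:
\begin{enumerate}
\item For $M=\begin{pmatrix}1&-\rho\\ -\overline{\rho}&1\end{pmatrix}$ and $J=\mathrm{diag}(1,-1)$ one has $M^*JM=(1-|\rho|^2)J$. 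So $M$ is $\sqrt{1-|\rho|^2}$ (not $1-|\rho|^2$) times a $J$-unitary matrix.
\item Invertibility of $X_0$ comes from this $J$-structure, or from $\det X_\ell\sim\lambda^\ell$. It does not follow from invertibility of the one-step matrices alone.
\end{enumerate}
So you have proved existence, uniqueness, and $\begin{pmatrix}1&0\end{pmatrix}Y_\ell=\lambda^\ell v_1+o(1)$.

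\textbf{The gap.} It is the identification of $v_1$ with \eqref{szscattering}, and it cannot be closed as you plan.

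Condition $(1)$ is imposed on $Y_0=X_0v$ and has already fixed $v_1$. You may not move it onto $\mathrm{diag}(\lambda,1)X_0v$; that would also multiply $\beta_0$ by $\lambda$ and give $(\delta_0-\lambda\beta_0)/(\lambda\alpha_0-\gamma_0)$. Structurally, $(1)$ says $v\propto X_0^{-1}(1,1)^t$. The right side of \eqref{szscattering}, by contrast, is $\lambda^{-1}$ times the ratio of the components of $X_0(\lambda,1)^t$. The symmetries $\delta_0=\overline{\alpha_0}$, $\gamma_0=\overline{\beta_0}$ do not identify the two.

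In fact they differ. Take $\rho_0=\rho\neq 0$ and $\rho_\ell=0$ for $\ell\ge 1$, so that $r\equiv-\rho$. Then $X_\ell=\mathrm{diag}(\lambda^\ell,1)$ for $\ell\ge1$ and $X_0=(1-|\rho|^2)^{-1}\begin{pmatrix}1&\rho\overline{\lambda}\\ \overline{\rho}\lambda&1\end{pmatrix}$. Solving $(1)$--$(2)$ by hand gives $s=\frac{1-\rho\overline{\lambda}}{1-\overline{\rho}\lambda}$, which is your $v_1$. Formula \eqref{szscattering} instead gives $\frac{1+\rho\overline{\lambda}^2}{1+\overline{\rho}\lambda^2}$. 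At $\rho=1/2$, $\lambda=i$ these are $(3+4i)/5$ and $1$.

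Renormalizing $X$ to $X_\ell N$, with $N=N(\lambda)$ independent of the potential, does not help. The case $\rho\equiv0$ forces $N(\lambda,1)^t\propto(\lambda,1)^t$, and then the example returns the same value. Your planned check at $\rho\equiv 0$ cannot detect any of this, since both expressions equal $1$ there.

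What your argument actually establishes is the statement with $s=(\delta_0-\beta_0)/(\alpha_0-\gamma_0)$. To obtain \eqref{szscattering} you would have to find and justify a different meaning of $\alpha_0,\beta_0,\gamma_0,\delta_0$, and test it on a nontrivial potential.
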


We have
\begin{equation}
  \label{phi789}
\varphi({\lambda})=(1+{\lambda}\overline{r(\overline{{\lambda}})})(1-{\lambda}\overline{r(\overline{{\lambda}})})^{-1}
\end{equation}
which is related to the associated Weyl function $N({\lambda})$ via $N({\lambda})=-i\overline{\varphi(\overline{{\lambda}})}$.\\

On the other hand, (up to a Hilbert space unitary map)  $r$ can be uniquely written as
\begin{equation}
  \label{rz}
r({\lambda})=H+{\lambda}G(I_{\mathcal H}-{\lambda}T)^{-1}F,
\end{equation}
where $\mathcal H$ is a Hilbert space and the operator matrix
\begin{equation}
  \label{u+}
U_+=\begin{pmatrix} T&F\\ G&H\end{pmatrix}\,\, :\,\,\mathcal H\oplus \mathbb C\,\,\,\longrightarrow \mathcal H\oplus \mathbb C
\end{equation}
is unitary. The above mentioned unicity is guaranteed by the condition that the linear span of the ranges
\begin{equation}
  \label{tyu}
\left\{ {\rm ran}\,T^{*\ell}G^*,\, \ell=0,1,\ldots\right\}\cup\left\{ {\rm ran}\, T^{\ell}F,\, \ell=0,1,\ldots\right\}
\end{equation}
is dense in $\mathcal H$, i.e. is closely connected; see \cite{adrs}.\\

{\bf The trace formula:}
Replacing $r$ by $-r$ the operator $U_+$ is replaced by
\begin{equation}
  \label{u-}
U_-=\begin{pmatrix} T&-F\\ G&-H\end{pmatrix}\,\, :\,\,\mathcal H\oplus \mathbb C\,\,\,\longrightarrow \mathcal H\oplus \mathbb C
\end{equation}
which is also unitary.\\

\begin{prop}
The operators $U_+$ and $U_-$ defined respectively by \eqref{u+} and \eqref{u-} satisfy \eqref{ze-inter}.
\end{prop}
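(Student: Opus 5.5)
The plan is to compute the kernel of the resolvent difference explicitly, and then to reduce \eqref{ze-inter} to a cyclicity statement for $U_-$. That statement will follow from the closely connectedness condition \eqref{tyu}. Throughout, write $e=\begin{pmatrix}0\\1\end{pmatrix}\in\mathcal H\oplus\mathbb C$ and let $P$ be the orthogonal projection onto the $\mathbb C$ component.

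\emph{Step 1: identifying the kernel.} I would start from the resolvent identity
\[
(U_+-\lambda I)^{-1}-(U_--\lambda I)^{-1}=(U_+-\lambda I)^{-1}(U_--U_+)(U_--\lambda I)^{-1}.
\]
From \eqref{u+} and \eqref{u-} we have
\[
U_--U_+=\begin{pmatrix}0&-2F\\0&-2H\end{pmatrix},
\]
so $(U_--U_+)(h,x)=-2x\begin{pmatrix}F\\H\end{pmatrix}$. The column $\begin{pmatrix}F\\H\end{pmatrix}$ is a column of the unitary $U_+$, hence a unit vector. Since $(U_+-\lambda I)^{-1}$ is injective, $v$ lies in the kernel for a given $\lambda$ if and only if $P(U_--\lambda I)^{-1}v=0$, that is, $\langle (U_--\lambda I)^{-1}v,e\rangle=0$.

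\emph{Step 2: passing to moments.} Suppose the condition of Step 1 holds for all $\lambda\in\mathbb C\setminus\mathbb T$. For $|\lambda|<1$ we expand $(U_--\lambda I)^{-1}=-\sum_{k\ge0}\lambda^kU_-^{*(k+1)}$. For $|\lambda|>1$ we expand $(U_--\lambda I)^{-1}=-\sum_{k\ge0}\lambda^{-k-1}U_-^{k}$. Comparing coefficients gives $\langle v,U_-^{k}e\rangle=0$ for every $k\in\mathbb Z$. So it suffices to show that the closed span $\mathcal M$ of $\{U_-^ke\,;\,k\in\mathbb Z\}$ is all of $\mathcal H\oplus\mathbb C$.

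\emph{Step 3: cyclicity.} This is the main point, and it is where \eqref{tyu} enters. By construction $\mathcal M$ contains $e$ and is invariant under both $U_-$ and $U_-^*$.
\begin{itemize}
\item First, $U_-e=(-F,-H)$. Subtracting $-He\in\mathcal M$ shows $(F,0)\in\mathcal M$.
\item Inductively, if $(T^\ell F,0)\in\mathcal M$, then $U_-(T^\ell F,0)=(T^{\ell+1}F,\,GT^\ell F)$. Removing the multiple $GT^\ell F\cdot e$ gives $(T^{\ell+1}F,0)\in\mathcal M$.
\item Similarly, $U_-^*=\begin{pmatrix}T^*&G^*\\-F^*&-\overline H\end{pmatrix}$ gives $U_-^*e=(G^*,-\overline H)$, hence $(G^*,0)\in\mathcal M$.
\item Then $U_-^*(h,0)=(T^*h,-F^*h)$ yields $(T^{*\ell}G^*,0)\in\mathcal M$ for all $\ell$.
\end{itemize}
Hence $\mathcal M$ contains $\mathcal K\oplus\{0\}$, where $\mathcal K$ is the closed linear span of the sets in \eqref{tyu}; by closely connectedness, $\mathcal K=\mathcal H$. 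Since $\mathcal M$ also contains $e$, we get $\mathcal M=\mathcal H\oplus\mathbb C$. Therefore $v=0$, which proves \eqref{ze-inter}.

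The only delicate points are the correct identification of the kernel in Step 1, which uses the injectivity of $(U_+-\lambda I)^{-1}$ and the nonvanishing of the column $(F,H)$, and the bookkeeping of adjoint entries in Step 3. Everything else is routine.
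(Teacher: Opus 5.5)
Your argument is correct, but it is organized differently from the paper's. The paper works on the kernel side with powers. From $U_+f=U_-f$ it gets $v=0$. Then, by induction using $U_+^{N+1}-U_-^{N+1}=U_+(U_+^N-U_-^N)+(U_+-U_-)U_-^N$ and $U_-^N(u,0)=(T^Nu,GT^{N-1}u)$, it shows $GT^\ell u=0$, and the adjoint computation gives $F^*T^{*\ell}u=0$. Density of \eqref{tyu} then gives $u=0$. A final step passes from the kernels of $U_+^{\ell}-U_-^{\ell}$ and $U_+^{*\ell}-U_-^{*\ell}$ to the resolvent differences via the expansions in $\lambda$.

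You instead use the second resolvent identity and the rank-one form of $U_--U_+$. This identifies the intersection in \eqref{ze-inter} exactly with the orthogonal complement of $\mathcal M=\overline{\rm span}\{U_-^ke\,;\,k\in\mathbb Z\}$. You then work on the range side, showing that $(T^\ell F,0)$ and $(T^{*\ell}G^*,0)$ lie in $\mathcal M$. This is dual to the paper's showing that kernel vectors are orthogonal to these same vectors, and closely connectedness enters in the same way.

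Your route buys several things:
\begin{enumerate}
\item A clean equivalence: \eqref{ze-inter} holds if and only if $e$ generates $\mathcal H\oplus\mathbb C$ as a reducing subspace of $U_-$. For a finite-rank difference the same argument works with ${\rm ran}\,(U_+-U_-)^*$ in place of $e$.
\item It avoids both the induction on $N$ and the separate power-series transfer step.
\item It uses only that $(F,H)$ is a unit column of $U_+$, rather than $F\neq 0$.
\end{enumerate}
The paper's route gives, in exchange, explicit information on each finite intersection $\bigcap_{\ell\le N}\ker(U_+^\ell-U_-^\ell)$.

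One harmless slip: for $|\lambda|<1$ one has $(U_--\lambda I)^{-1}=\sum_{k\ge0}\lambda^kU_-^{*(k+1)}$, with no minus sign. This does not affect the vanishing of the coefficients.
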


\begin{proof}
  We proceed in a number of steps.\\

  STEP 1: {\sl Let $f=\begin{pmatrix}u\\v\end{pmatrix}\in\mathcal H\oplus \mathbb C$ be such that $U_+f=U_-f$. Then, $v=0$.}\smallskip

  Indeed, the condition $(U_+-U_-)f=0$ amounts to
\[
  \begin{pmatrix}0&2F\\0&2H\end{pmatrix}\begin{pmatrix}u\\v\end{pmatrix}=\begin{pmatrix}0\\ 0 \end{pmatrix},
\]
and so we get $Fv=0$ and $Hv=0$. It may be that $H=0$ but $F\not=0$ for a non-constant $r({\lambda})$. From $F\not=0$ we get $F^*Fv=0$ and so $v=0$.\\

STEP 2: {\sl Let $N\in\left\{2,3,\ldots\right\}$ and assume that $f$ as above with $v=0$ belongs to $\cap_{\ell=1}^N\ker(U_+^\ell-U_-^\ell)$.
  Then $Gu=GTu=\cdots =GT^{N-2}u=0$ and
  \begin{equation}
    U_-^N\begin{pmatrix}u\\0\end{pmatrix}=\begin{pmatrix}T^{N}u\\ GT^{N-1}u\end{pmatrix}.
    \end{equation}
  }

  We proceed by induction and first check for $N=2$. We write
  \[
    \begin{split}
      0&=(U_+^2-U_-^2)\begin{pmatrix}u\\0\end{pmatrix}\\
      &=U_+\underbrace{(U_+-U_-)\begin{pmatrix}u\\0\end{pmatrix}}_{=0}+(U_+-U_-)U_-\begin{pmatrix}u\\0\end{pmatrix}\\
      &=\begin{pmatrix}0&2F\\0&2H\end{pmatrix}\begin{pmatrix}T&-F\\G&-H\end{pmatrix}\begin{pmatrix}u\\0\end{pmatrix}\\
      &=\begin{pmatrix}2FGu\\ 2HGu\end{pmatrix}
          \end{split}
  \]
  and so $Gu=0$ since $F\not=0$. Furthermore,
  \[
U_-\begin{pmatrix}u\\0\end{pmatrix}=\begin{pmatrix}Tu\\ Gu\end{pmatrix}
\]
which proves the recursion hypothesis for $N=2$. Assume now the hypothesis true at rank $N$. To prove it at rank $N+1$ we write similarly as in the above,
\[
    \begin{split}
      0&=(U_+^{N+1}-U_-^{N+1})\begin{pmatrix}u\\0\end{pmatrix}\\
      &=U_+\underbrace{(U_+^N-U_-^N)\begin{pmatrix}u\\0\end{pmatrix}}_{=0 \,\,\mbox{\rm by hypothesis}}+(U_+-U_-)U_-^N\begin{pmatrix}u\\0\end{pmatrix}\\
      &=\begin{pmatrix}0&2F\\0&2H\end{pmatrix}\begin{pmatrix}T^{N}u\\ GT^{N-1}u\end{pmatrix}\quad(\mbox{\rm hypothesis at rank $N$})\\
      &=\begin{pmatrix}2FGT^{N-1}u\\ 2HGT^{N-1}u\end{pmatrix}
          \end{split}
  \]
  and so $GT^{N-1}u=0$ since $F\not=0$. Furthermore,
  \[
    \begin{split}
      U_-^{N+1}\begin{pmatrix}u\\0\end{pmatrix}&=U_-U_-^N\begin{pmatrix}u\\0\end{pmatrix}\\
      &=U_-\begin{pmatrix}T^{N}u\\ GT^{N-1}u\end{pmatrix}\\
      &=U_-\begin{pmatrix}T^{N}u\\ 0\end{pmatrix}\\
      &=\begin{pmatrix}T^{N+1}u\\ GT^{N}u\end{pmatrix},
      \end{split}
    \]
    and the recursion hypothesis is proved for $N+1$.\\

In  a similar way one proves:\smallskip

STEP 3: {\sl Let $N\in\left\{2,3,\ldots\right\}$ and assume that $f$ as above with $v=0$ belongs to $\cap_{n=1}^N\ker(U_+^{*\ell}-U_-^{*\ell})$.
  Then $F^*u=F^*T^*u=\cdots =F^*T^{*(N-2)}u=0$ and
  \begin{equation}
    U_-^{*N}\begin{pmatrix}u\\0\end{pmatrix}=\begin{pmatrix}T^{*N}u\\- F^*T^{*(N-1)}u\end{pmatrix}.
    \end{equation}
  }

    STEP 4: {\sl In the notation of, and with the hypothesis of, step 2, we have $u=0$.}\smallskip

    From Steps 2 and 3 we have
    \[
        GT^\ell u=0\quad{\rm and}\quad        F^*T^{*\ell}u=0,\quad \ell=0,1,\ldots .
      \]
    The claim follows from the density in $\mathcal H$ of the linear span of the ranges \eqref{tyu}.\\

    STEP 5: {\sl It holds that
      \[
        \begin{split}
\left(\bigcap_{\ell=1}^\infty\ker (U_+^\ell-U_-^\ell) \right)\bigcap    \left(   \bigcap_{\ell=1}^\infty\ker (U_+^{*\ell}-U_-^{*\ell})\right)
&=\\
&\hspace{-3cm}=\bigcap_{\lambda\in\mathbb C\setminus\mathbb T}\ker\left((U_+-\lambda I_{\mathcal H})^{-1}-(U_--\lambda I_{\mathcal H})^{-1}\right).
\end{split}
\]}
This stems from
\[
(U_+-\lambda I_{\mathcal H})^{-1}-(U_--\lambda I_{\mathcal H})^{-1}=-\sum_{\ell=0}^\infty \lambda^{-\ell-1}(U_+^n-U_-^\ell), \quad |\lambda|>1,
\]
and \eqref{lala} for $|\lambda|<1$.

\end{proof}

\begin{thm}
With $\varphi({\lambda})$ given by \eqref{phi789}
\[
 {\rm det}({\lambda}I_{\mathcal H\oplus\mathbb C}-U_+)({\lambda}I_{\mathcal H\oplus\mathbb C}-U_-)^{-1}=\varphi({\lambda})
\]
and
\[
{\rm Tr}\left\{({\lambda}I_{\mathcal H\oplus\mathbb C}-U_+)^{-1}-({\lambda}I_{\mathcal H\oplus\mathbb C}-U_-)^{-1}\right\}=\frac{\varphi^\prime({\lambda})}{\varphi({\lambda})}.
\]
\end{thm}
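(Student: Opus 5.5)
The plan is to compute the perturbation determinant directly, since $U_+-U_-$ has rank one, and then obtain the trace identity by logarithmic differentiation via \eqref{log-det}. By \eqref{u+} and \eqref{u-},
\[
U_+-U_-=\begin{pmatrix}0&2F\\0&2H\end{pmatrix}=2\begin{pmatrix}F\\H\end{pmatrix}\begin{pmatrix}0&1\end{pmatrix}=2be^*,
\]
where $b=\begin{pmatrix}F\\H\end{pmatrix}$ and $e=\begin{pmatrix}0\\1\end{pmatrix}$. We write
\[
(\lambda I-U_+)(\lambda I-U_-)^{-1}=I-2be^*(\lambda I-U_-)^{-1}.
\]
Because the perturbation has rank one, the Fredholm determinant reduces to a scalar:
\[
\det(\lambda I-U_+)(\lambda I-U_-)^{-1}=1-2e^*(\lambda I-U_-)^{-1}b.
\]

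The key step is to compute the compression $e^*(\lambda I-U_-)^{-1}b$ through the transfer function of the colligation. Put $b_-=\begin{pmatrix}-F\\-H\end{pmatrix}$, the second column of $U_-$; then $b=-U_-e$. The $(2,2)$ entry of $(I-\mu U_-)^{-1}$ is computed by a Schur complement. Write $\mu=1/\lambda$ and solve $(I-\mu U_-)x=e$ blockwise. The first component gives $x_1=-\mu(I-\mu T)^{-1}Fx_2$. The second component then gives
\[
\bigl(1+\mu H+\mu^2G(I-\mu T)^{-1}F\bigr)x_2=1,
\]
that is, $x_2=(1+\mu r(\mu))^{-1}$ by \eqref{rz}.

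Next, $e^*(\lambda I-U_-)^{-1}b=-\mu\,e^*(I-\mu U_-)^{-1}U_-e$. This equals $-e^*\bigl((I-\mu U_-)^{-1}-I\bigr)e=1-x_2$. Hence
\[
1-2e^*(\lambda I-U_-)^{-1}b=2x_2-1=\frac{1-\mu r(\mu)}{1+\mu r(\mu)},\qquad \mu=1/\lambda .
\]
It then remains to match this with $\varphi(\lambda)$ in \eqref{phi789}, using the relation $r(1/\lambda)$ versus $\overline{r(\overline\lambda)}$. This is where I expect the main obstacle: reconciling conventions. That means using the symmetry extension \eqref{2-2}, i.e. $\varphi(\lambda)=-\overline{\varphi(1/\overline\lambda)}$, possibly together with a sign or inversion, so that $\frac{1-\mu r(\mu)}{1+\mu r(\mu)}$ at $\mu=1/\lambda$ is identified with $\varphi(\lambda)$ in $\mathbb E$. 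To make the identity hold on all of $\mathbb C\setminus\mathbb T$, I would instead expand near $0$ using $(\lambda I-U_\pm)^{-1}=-U_\pm^*(I-\lambda U_\pm^*)^{-1}$ with the adjoint colligation $U_\pm^*$, whose transfer function is $\overline{r(\overline\lambda)}$. Repeating the same Schur complement computation gives the $\mathbb D$ formula with $\lambda\overline{r(\overline\lambda)}$ directly, and the two regions are glued by \eqref{2-2}.

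Finally, the trace formula follows as in the proof of \eqref{tre-delta-2}. The operator $U_+-U_-$ is of rank one, hence trace class, so \eqref{log-det} applies with $A=U_-$ and $B=U_+$. Differentiating the logarithm of the determinant identity then yields ${\rm Tr}\{(\lambda I-U_+)^{-1}-(\lambda I-U_-)^{-1}\}=\varphi'(\lambda)/\varphi(\lambda)$, with signs taken as in \eqref{log-det}. Condition \eqref{ze-inter}, from the preceding proposition, is not needed for the computation; it only guarantees that the pair fits the model of Section \ref{sec-3} with $n=1$.
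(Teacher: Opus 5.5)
\textbf{There is a genuine gap, and it sits exactly at the step you postponed.} Your algebra is right up to the last step. But you cannot identify $1-2e^*(\lambda I-U_-)^{-1}b$ with $\varphi(\lambda)$ by ``reconciling conventions'': your own formulas show the stated identities are false.

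\begin{itemize}
\item For $\lambda\in\mathbb E$, extend \eqref{phi789} by \eqref{2-2}. This gives $\varphi(\lambda)=-\overline{\varphi(1/\overline{\lambda})}=-\frac{1+\mu r(\mu)}{1-\mu r(\mu)}$ with $\mu=1/\lambda$. So your value $\frac{1-\mu r(\mu)}{1+\mu r(\mu)}$ is exactly $-1/\varphi(\lambda)$.
\item In $\mathbb D$, your adjoint computation uses $U_-^*b=-e$. It gives $e^*(\lambda I-U_-)^{-1}b=e^*(I-\lambda U_-^*)^{-1}e=(1+\lambda\overline{r(\overline{\lambda})})^{-1}$, hence again $-1/\varphi(\lambda)$.
\item The quickest check is $\lambda=0$. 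The left side is ${\rm det}(U_+U_-^*)=1+2e^*U_-^*b=1-2\|b\|^2=-1$, since $b=U_+e$ is a unit vector. But $\varphi(0)=1$.
\end{itemize}

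No sign or inversion convention removes this. What your argument actually proves is
\[
{\rm det}(\lambda I-U_+)(\lambda I-U_-)^{-1}=-\frac{1}{\varphi(\lambda)},\qquad
{\rm Tr}\left\{(\lambda I-U_+)^{-1}-(\lambda I-U_-)^{-1}\right\}=-\frac{\varphi'(\lambda)}{\varphi(\lambda)} .
\]
The second identity follows from the first by \eqref{log-det} with $A=U_-$, $B=U_+$, as you planned. The stated trace formula also fails on its own: at $\lambda=0$ the trace equals $-2\overline{H}$, while $\varphi'(0)/\varphi(0)=2\overline{H}$.

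\textbf{Comparison with the paper.} The paper's proof goes through the general model. It writes $I-U_+^*U_-=U_+^*(U_+-U_-)=\begin{pmatrix}0&0\\0&2\end{pmatrix}=-C^*BC$ with $B=-1$ and $C=\begin{pmatrix}0&\sqrt{2}\end{pmatrix}$. It then identifies the model function \eqref{phi-model} with $\varphi$ by a Schur complement, and relies on \eqref{tre-delta}--\eqref{tre-delta-2}. Carried through with $\det B^{-1}=-1$, this route gives ${\rm det}(U_+-\lambda I)^{-1}(U_--\lambda I)=-\varphi(\lambda)$ and ${\rm Tr}\{(\lambda I-U_-)^{-1}-(\lambda I-U_+)^{-1}\}=\varphi'(\lambda)/\varphi(\lambda)$. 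These are the same corrected identities: the statement with $U_+$ and $U_-$ interchanged and a factor $-1$ in the determinant.

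Your direct route (rank-one perturbation determinant plus a Schur complement) is equally valid. It is also more self-contained, since it needs neither \eqref{ze-inter} nor the model of Section~\ref{sec-3}. But the proof has to conclude with the corrected formulas, not treat the discrepancy as a matter of conventions.
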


  \begin{proof}
Using the unitarity of the block operator $U_+$ we have:
\begin{equation}
  I_{\mathcal H\oplus\mathbb C}-U_+^*U_-=U_+^*(U_+-U_-)=\begin{pmatrix}T^*&G^*\\F^*&H^*\end{pmatrix}\begin{pmatrix}0&2F\\0&2H\end{pmatrix}
  =\begin{pmatrix}0&0\\0&2\end{pmatrix},
\end{equation}
and so
\[
I_{\mathcal H\oplus\mathbb C}-U_+^*U_-=-C^*BC,
\]
with
\begin{equation}
  B=-1\quad{\rm and}\quad C=\begin{pmatrix}0&\sqrt{2}\end{pmatrix}.
\end{equation}
Note that
\[
B^{-1}+B^{-*}=-CC^*
\]
\end{proof}
\begin{thm}
  It holds that
  \begin{equation}
    \varphi({\lambda})=\left(-i\overline{N(\overline{{\lambda}})}\right)^{-1}
  \end{equation}
  where $N$ is the Weyl coefficient function of the first-order discrete system \eqref{tyu89}.
  \end{thm}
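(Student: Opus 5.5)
The plan is to prove the identity by direct substitution into the closed formulas already on the table, namely
\[
N({\lambda})=i\frac{1-{\lambda}r({\lambda})}{1+{\lambda}r({\lambda})}\quad\text{and}\quad\varphi({\lambda})=(1+{\lambda}r^\sharp({\lambda}))(1-{\lambda}r^\sharp({\lambda}))^{-1},
\]
where $r^\sharp({\lambda})=\overline{r(\overline{{\lambda}})}$, the same reflection used earlier for $\Phi^\sharp$. No spectral theory or use of $U_\pm$ is needed. The argument rests on two facts from the paper: the Carath\'eodory function built from a Schur function as in \eqref{vvvv} is sent to its inverse when the Schur function changes sign, and the Schur coefficients change sign together with the function, as in the Proposition following the Schur algorithm.

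First I would fix which reflection coefficient function enters $N$. The function $r$ was defined through the coefficients $-\rho_0,-\rho_1,\ldots$, whereas the system \eqref{tyu89} is written with the potential $\rho_\ell$, which by the Proposition on Schur parameters corresponds to $-r$. So I would carefully identify whether the Weyl coefficient function of \eqref{tyu89} is $i(1-{\lambda}r)/(1+{\lambda}r)$ or $i(1+{\lambda}r)/(1-{\lambda}r)$. This choice is exactly what makes an inverse appear in the statement, compared with the relation $N({\lambda})=-i\overline{\varphi(\overline{{\lambda}})}$ recorded just after \eqref{phi789}.

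Second, I would compute $\overline{N(\overline{{\lambda}})}$. Conjugating turns $i$ into $-i$ and $r(\overline{{\lambda}})$ into $r^\sharp({\lambda})$, and $\overline{\overline{{\lambda}}}={\lambda}$. This gives $\overline{N(\overline{{\lambda}})}=-i(1\mp{\lambda}r^\sharp)/(1\pm{\lambda}r^\sharp)$, with the signs determined by the first step. Multiplying by $-i$ and inverting then yields $\varphi({\lambda})=(1+{\lambda}r^\sharp({\lambda}))(1-{\lambda}r^\sharp({\lambda}))^{-1}$. Both sides are analytic in $\mathbb D$ because $|{\lambda}r^\sharp({\lambda})|<1$ there, so the identity holds in $\mathbb D$. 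It extends to $\mathbb E$ because both sides satisfy the symmetry \eqref{2-2}.

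The main obstacle is bookkeeping of the constants, not analysis. The factor $-i\cdot(\pm i)$ produces an overall sign that has to match the normalization of $N$ (the Weyl function with values in the upper half-plane versus $\varphi$ with positive real part). The sign of $r$ in the definition of $N$ must also be consistent with the Schur coefficients $\pm\rho_\ell$. I would first settle these conventions on the trivial example $\rho_\ell\equiv 0$ for all $\ell$. Then $r\equiv 0$, $\varphi\equiv 1$ and $N\equiv i$, and checking the statement there pins down the overall sign and confirms which of $r$ and $-r$ must be used in $N$.
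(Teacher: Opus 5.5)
Your plan proves a weaker, essentially tautological statement, and the sign does not come out as you claim. The main gap is the missing model computation. In the paper, the $\varphi$ of this theorem is the Carath\'eodory function that the model of Section~\ref{sec-3} attaches to the pair $U_\pm$ of \eqref{u+}--\eqref{u-}. That is, $\varphi(\lambda)=B^{-1}+C(I-\lambda U_+^*)^{-1}C^*$ from \eqref{phi-model}, with $B=-1$ and $C=\begin{pmatrix}0&\sqrt{2}\end{pmatrix}$ obtained in the preceding proof. The content of the theorem is that this model function coincides with \eqref{phi789}. This is also what justifies the phrase ``with $\varphi$ given by \eqref{phi789}'' in the previous theorem. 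By taking $\varphi$ to be \eqref{phi789} and declaring that no use of $U_\pm$ is needed, you skip exactly this step.

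The missing argument is a Schur-complement computation. First, $\varphi(\lambda)=-1+2\,[(I-\lambda U_+^*)^{-1}]_{22}$, and
\[
I-\lambda U_+^*=\begin{pmatrix}I_{\mathcal H}-\lambda T^*&-\lambda G^*\\ -\lambda F^*&1-\lambda H^*\end{pmatrix}.
\]
So the $(2,2)$ entry of the inverse is $M(\lambda)^{-1}$, where
\[
M(\lambda)=1-\lambda H^*-\lambda^2F^*(I_{\mathcal H}-\lambda T^*)^{-1}G^*.
\]
By the realization \eqref{rz}, $M(\lambda)=1-\lambda\overline{r(\overline{\lambda})}$. Hence $\varphi=-1+2(1-\lambda r^\sharp)^{-1}=(1+\lambda r^\sharp)(1-\lambda r^\sharp)^{-1}$.

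Second, your closing algebra fails as written, and choosing between $r$ and $-r$ cannot repair it. The paper fixes $N=i(1-\lambda r)/(1+\lambda r)$, so conjugation always turns the prefactor $i$ into $-i$. This gives $\overline{N(\overline{\lambda})}=-i(1-\lambda r^\sharp)/(1+\lambda r^\sharp)$, hence $-i\,\overline{N(\overline{\lambda})}=-(1-\lambda r^\sharp)/(1+\lambda r^\sharp)$. Therefore $\bigl(-i\,\overline{N(\overline{\lambda})}\bigr)^{-1}=-\varphi$, not $\varphi$, so the step ``multiplying by $-i$ and inverting yields $\varphi$'' is false. Putting $-r$ in place of $r$ in $N$ gives $-1/\varphi$ instead.

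Your own test case exposes this. For $r\equiv 0$ one has $\varphi\equiv 1$ but $\bigl(-i\cdot\overline{i}\bigr)^{-1}=-1$. Since $r=-r$ there, this check cannot select a convention that rescues the formula.

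The identity that actually holds, and with which the paper's proof ends, is
\[
\varphi(\lambda)=\bigl(\overline{-iN(\overline{\lambda})}\bigr)^{-1}=\bigl(i\,\overline{N(\overline{\lambda})}\bigr)^{-1}.
\]
Here the conjugation acts on $-iN$ as a whole, so you multiply $\overline{N(\overline{\lambda})}$ by $+i$. You should prove that version and note that the typeset formula holds only up to sign.
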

\begin{proof}
The function $\varphi$ from the model is given by formula \eqref{phi-model}, that is:
\[
  \begin{split}
\varphi({\lambda})&=B^{-1}+C(I_{\mathcal H\oplus\mathbb C}-{\lambda}U_+^*)^{-1}C^*\\
&=-1+2\begin{pmatrix}0&1\end{pmatrix}\begin{pmatrix}I_{\mathcal H}-{\lambda}T^*&-{\lambda}G^*\\ -{\lambda}F^*&1-{\lambda}H^*\end{pmatrix}^{-1}\begin{pmatrix}0\\ 1\end{pmatrix}\\
&=-1+2\begin{pmatrix}0&1\end{pmatrix}\begin{pmatrix}\star&\star\\ \star&(M({\lambda}))^{-1}\end{pmatrix}\begin{pmatrix}0\\ 1\end{pmatrix}\\
&=-1+2M({\lambda})^{-1},
\end{split}
\]
with
\begin{equation}
  M({\lambda})=1-{\lambda}H^*-{\lambda}F^*(I_{\mathcal H}-{\lambda}T^*)^{-1}{\lambda}G^*.
\end{equation}
Note that we have used the formula (see e.g. \cite[p. 18]{MR1084815})
\[
  \begin{pmatrix}A&B\\ C&D\end{pmatrix}^{-1}=\begin{pmatrix}A^{-1}+A^{-1}B(D^\square)^{-1}CA^{-1}&-A^{-1}B(D^\square)^{-1}\\
        -(D^\square)^{-1}CA^{-1}& (D^\square)^{-1}\end{pmatrix},
    \]
    with $D^\square=D-CA^{-1}B$   for the inverse of a block matrix, assuming all indicated inverses to exist.\smallskip

  It follows that, with $r({\lambda})$ given by \eqref{rz},
  \begin{equation}
\varphi({\lambda})+1=\frac{2}{1-{\lambda}\overline{r(\overline{{\lambda}})}}
\end{equation}

and so
\[
  \begin{split}
    \varphi({\lambda})&=-1+2(1-{\lambda}\overline{r(\overline{{\lambda}})})^{-1}\\
    &=(1+{\lambda}\overline{r(\overline{{\lambda}})})(1-{\lambda}\overline{r(\overline{{\lambda}})})^{-1}\\
    &=\left(\overline{-iN(\overline{{\lambda}})}\right)^{-1}.
    \end{split}
  \]
\end{proof}


\begin{thebibliography}{10}

\bibitem{adamyan95}
V.M. Adamyan and S.E. Nechayev.
\newblock Nuclear {H}ankel matrices and orthogonal trigonometric polynomials.
\newblock {\em Contemporary Mathematics}, 189:1--15, 1995.

\bibitem{MR1638044}
D.~Alpay.
\newblock {\em Algorithme de {S}chur, espaces \`a noyau reproduisant et
  th\'eorie des syst\`emes}, volume~6 of {\em Panoramas et Synth\`eses
  [Panoramas and Syntheses]}.
\newblock Soci\'et\'e Math\'ematique de France, Paris, 1998.

\bibitem{MR2002b:47144}
D.~Alpay.
\newblock {\em The {S}chur algorithm, reproducing kernel spaces and system
  theory}.
\newblock American Mathematical Society, Providence, RI, 2001.
\newblock Translated from the 1998 French original by Stephen S. Wilson,
  Panoramas et Synth\`eses.

\bibitem{CAPB_2}
D.~Alpay.
\newblock {\em An advanced complex analysis problem book. Topological Vector
  Spaces, Functional Analysis, and Hilbert spaces of analytic functions}.
\newblock Birkh\"auser/Springer Basel AG, Basel, 2015.

\bibitem{adrs}
D.~Alpay, A.~Dijksma, J.~Rovnyak, and H.~de~Snoo.
\newblock {\em {Schur} functions, operator colligations, and reproducing kernel
  {P}ontryagin spaces}, volume~96 of {\em Operator theory: {A}dvances and
  {A}pplications}.
\newblock Birkh{\" a}user Verlag, Basel, 1997.

\bibitem{ad1}
D.~Alpay and H.~Dym.
\newblock Hilbert spaces of analytic functions, inverse scattering and operator
  models, {I}.
\newblock {\em Integral Equation and Operator Theory}, 7:589--641, 1984.

\bibitem{ag}
D.~Alpay and I.~Gohberg.
\newblock Unitary rational matrix functions.
\newblock In I.~Gohberg, editor, {\em Topics in interpolation theory of
  rational matrix-valued functions}, volume~33 of {\em { Operator {T}heory:
  {A}dvances and {A}pplications}}, pages 175--222. Birkh{\" a}user Verlag,
  Basel, 1988.

\bibitem{MR2002d:47021}
D.~Alpay and I.~Gohberg.
\newblock Connections between the {C}arath\'eodory-{T}oeplitz and the {N}ehari
  extension problems: the discrete scalar case.
\newblock {\em Integral Equations Operator Theory}, 37(2):125--142, 2000.

\bibitem{MR1960423}
D.~Alpay and I.~Gohberg.
\newblock A trace formula for canonical differential expressions.
\newblock {\em J. Funct. Anal.}, 197(2):489--525, 2003.

\bibitem{MR2069002}
D.~Alpay and I.~Gohberg.
\newblock Pairs of selfadjoint operators and their invariants.
\newblock {\em Algebra i Analiz}, 16(1):70--120, 2004.

\bibitem{ag-def}
D.~Alpay and I.~Gohberg.
\newblock {Discrete analogs of canonical systems with pseudo--exponential
  potential. Definitions and formulas for the spectral matrix functions}.
\newblock In D.~Alpay and I.~Gohberg, editors, {\em {The state space method.
  New results and new applications}}, volume 161, pages 1--47. Birkh{\" a}user
  Verlag, Basel, 2006.

\bibitem{MR2216236}
D.~Alpay and I.~Gohberg, editors.
\newblock {\em Interpolation, {S}chur functions and moment problems}, volume
  165 of {\em Operator Theory: Advances and Applications}.
\newblock Birkh\"auser Verlag, Basel, 2006.
\newblock Linear Operators and Linear Systems.

\bibitem{ag-def-vec}
D.~Alpay and I.~Gohberg.
\newblock Discrete systems and their characteristic spectral functions.
\newblock {\em Mediterr. J. Math.}, 4(1):1--32, 2007.

\bibitem{MR2363355}
H.~Bart, I.~Gohberg, M.A. Kaashoek, and A.C.M. Ran.
\newblock {\em Factorization of matrix and operator functions: the state space
  method}, volume 178 of {\em Operator Theory: Advances and Applications}.
\newblock Birkh\"auser Verlag, Basel, 2008.
\newblock Linear Operators and Linear Systems.

\bibitem{MR2663312}
H.~Bart, I.~Gohberg, M.A. Kaashoek, and A.C.M. Ran.
\newblock {\em A state space approach to canonical factorization with
  applications}, volume 200 of {\em Operator Theory: Advances and
  Applications}.
\newblock Birkh\"auser Verlag, Basel; Birkh\"auser Verlag, Basel, 2010.
\newblock Linear Operators and Linear Systems.

\bibitem{dbbook}
{L. de} Branges.
\newblock {\em Espaces {H}ilbertiens de fonctions enti\`{e}res}.
\newblock Masson, {P}aris, 1972.

\bibitem{dbr1}
{L. de} Branges and J.~Rovnyak.
\newblock Canonical models in quantum scattering theory.
\newblock In C.~Wilcox, editor, {\em Perturbation theory and its applications
  in quantum mechanics}, pages 295--392. Wiley, {N}ew {Y}ork, 1966.

\bibitem{dbs}
{L. de} Branges and L.A. Shulman.
\newblock Perturbation theory of unitary operators.
\newblock {\em J. Math. Anal. Appl.}, 23:294--326, 1968.

\bibitem{MR48:904}
M.~S. Brodski{\u\i}.
\newblock {\em Triangular and {J}ordan representations of linear operators}.
\newblock American Mathematical Society, Providence, R.I., 1971.
\newblock Translated from the Russian by J. M. Danskin, Translations of
  Mathematical Monographs, Vol. 32.

\bibitem{MR2000c:33001}
A.~Bultheel, P.~Gonzalez-Vera, E.~Hendriksen, and O.~Njastad.
\newblock {\em Orthogonal rational functions}.
\newblock Cambridge University Press, Cambridge, 1999.

\bibitem{DI1}
H.~Dym and A.~Iacob.
\newblock Applications of factorization and {T}oeplitz operators to inverse
  problems.
\newblock In I.~Gohberg, editor, {\em Toeplitz centennial (Tel Aviv, 1981)},
  volume~4 of {\em Operator Theory: Adv. Appl.}, pages 233--260. Birkh\"auser,
  Basel, 1982.

\bibitem{egl-95}
R.~Ellis, I.~Gohberg, and D.~Lay.
\newblock Infinite analogues of block {T}oeplitz matrices and related
  orthogonal functions.
\newblock {\em Integral Equations Operator Theory}, 22:375--419, 1995.

\bibitem{MR525380}
P.~Faurre, M.~Clerget, and F.~Germain.
\newblock {\em Op\'erateurs rationnels positifs}, volume~8 of {\em M\'ethodes
  Math\'ematiques de l'Informatique [Mathematical Methods of Information
  Science]}.
\newblock Dunod, Paris, 1979.
\newblock Application \`a l'hyperstabilit\'e et aux processus al\'eatoires.

\bibitem{hspnw}
B.~Fritzsche and B.~Kirstein, editors.
\newblock {\em {Ausgew\"{a}hlte {A}rbeiten zu den {U}rspr\"{u}ngen der
  {S}chur--{A}nalysis}}, volume~16 of {\em {Teubner--Archiv zur Mathematik}}.
\newblock {B.G. Teubner Verlagsgesellschaft, Stuttgart--Leipzig}, 1991.

\bibitem{goh1}
I.~Gohberg, editor.
\newblock {\em I. Schur methods in operator theory and signal processing},
  volume~18 of {\em Operator theory: {A}dvances and {A}pplications}.
\newblock Birkh{\" a}user Verlag, Basel, 1986.

\bibitem{herglotz}
G.~Herglotz.
\newblock {\"{U}}ber {P}otenzenreihen mit positiven reelle {T}eil im
  {E}inheitskreis.
\newblock {\em Sitzungsber {S}achs. {A}kad. {W}iss. {L}eipzig, {M}ath},
  63:501--511, 1911.

\bibitem{MR1084815}
R.A. Horn and C.R. Johnson.
\newblock {\em Matrix analysis}.
\newblock Cambridge University Press, Cambridge, 1990.
\newblock Corrected reprint of the 1985 original.

\bibitem{kyp4}
J.~Suykens P. Van~Dooren I.~Goethals, T. Van~Gestel and B.~De Moor.
\newblock Identification of positive real models in subspace identification by
  using regularization.
\newblock 48, 2003.

\bibitem{kaash-field}
M.~Kaashoek.
\newblock State space theory of rational matrix functions and applications.
\newblock In P.~Lancaster, editor, {\em Lectures on operator theory and its
  applications}, volume~3 of {\em Fields {I}nstitute {M}onographs}, pages
  233--333. American {M}athematical {S}ociety, 1996.

\bibitem{MR1710394}
M.~G. Krein.
\newblock {\em Izbrannye trudy. {III}}.
\newblock Natsionalnaya Akademiya Nauk Ukrainy Institut Matematiki,
  Kiev, 1997.
\newblock Spektralnaya teoriya struny i voprosy ustoichivosti. [Spectral string
  theory and stability problems].

\bibitem{krein-trace2}
M.G. Krein.
\newblock On perturbation determinants and a trace formula for unitary and
  self--adjoint operators.
\newblock {\em Dokl. {A}kad. {N}auk. {SSSR}}, 144:145--152, 1962.
\newblock In {R}ussian.

\bibitem{MR86m:00014}
M.G. Krein.
\newblock {\em Topics in differential and integral equations and operator
  theory}, volume~7 of {\em Operator theory: {A}dvances and {A}pplications}.
\newblock Birkh\"auser Verlag, Basel, 1983.
\newblock Edited by I. Gohberg, Translated from the Russian by A. Iacob.

\bibitem{kyp3}
B.E. L.~Hitz and Prof. B. D.~O. Anderson.
\newblock Discrete positive-real functions and their application to system
  stability.
\newblock 116.

\bibitem{MR3861897}
C.~Liaw and S.~Treil.
\newblock General {C}lark model for finite-rank perturbations.
\newblock {\em Anal. PDE}, 12(2):449--492, 2019.

\bibitem{kyp2}
M.~Liu and J.~Xiong.
\newblock Bilinear transformation for discrete-time positive real and negative
  imaginary systems.
\newblock {\em {IEEE} {T}ransactions on automatic control}, 63, 2018.

\bibitem{Rantez}
A.~Rantzer.
\newblock On the {Kalman-Yakubovich-Popov} lemma.
\newblock {\em {Sys. Cont. Lett.}}, 28:7--10, 1996.

\bibitem{MR1371140}
A.~Sakhnovich.
\newblock Canonical systems and transfer matrix-functions.
\newblock {\em Proc. Amer. Math. Soc.}, 125(5):1451--1455, 1997.

\bibitem{ope-iden}
L.A. Sakhnovich.
\newblock {\em Spectral theory of canonical differential systems. {M}ethod of
  operator identities}, volume 107 of {\em Operator Theory: Advances and
  Applications}.
\newblock Birkh\"auser Verlag, Basel, 1999.
\newblock Translated from the Russian manuscript by E. Melnichenko.

\bibitem{schur}
I.~Schur.
\newblock {\"U}ber die {P}otenzreihen, die im {Innern des Einheitkreises
  beschr{\" a}nkten sind, {I}}.
\newblock {\em Journal f\"ur die Reine und Angewandte Mathematik},
  147:205--232, 1917.
\newblock {\rm English translation in: I. Schur methods in operator theory and
  signal processing. (Operator theory: {A}dvances and {A}pplications OT 18
  (1986), Birkh{\" a}user Verlag), Basel}.

\bibitem{MR2077215}
B.~Simon.
\newblock Analogs of the {$m$}-function in the theory of orthogonal polynomials
  on the unit circle.
\newblock {\em J. Comput. Appl. Math.}, 171(1-2):411--424, 2004.

\bibitem{Stone}
M.~Stone.
\newblock {\em Linear transformations in {H}ilbert space}.
\newblock American {M}athematical {S}ociety, {P}rovidence, {R}hode {I}sland,
  1932.

\bibitem{szego}
G.~Szeg{\"{o}}.
\newblock {\em Orthogonal polynomials}, volume~23.
\newblock {Amer.} Math. Soc., {Rhodes} Island, 1975.

\bibitem{zbMATH03263403}
A.~G. Vitushkin.
\newblock The analytic capacity of sets in problems in approximation theory.
\newblock {\em Russ. Math. Surv.}, 22(6):139--200, 1967.

\bibitem{wall}
H.S. Wall.
\newblock {\em Analytic theory of continued fractions}.
\newblock Van {N}ostrand, 1948.

\bibitem{kyp1}
C.~Xiao and D.~Hill.
\newblock Generalizations and new proof of the discrete-time positive real
  lemma and bounded real lemma.
\newblock {\em {IEEE} Transactions on circuits and systems-{I}: Fundamental
  theory and applications}, 46, 1999.

\end{thebibliography}
\end{document}